\newtheorem{theorem}{Theorem}[section]
\newtheorem{lemma}[theorem]{Lemma}
\newtheorem{proposition}[theorem]{Proposition}
\newtheorem{example}[theorem]{Example}
\newtheorem{corollary}[theorem]{Corollary}
\theoremstyle{definition}
\newtheorem{definition}[theorem]{Definition}
\newtheorem{remark}[theorem]{Remark}
\newtheorem{remarks}[theorem]{Remarks}
\providecommand{\norm}[1]{\lVert#1\rVert} 
\providecommand{\abs}[1]{\lvert#1\rvert} 
\newcommand{\IR}{\mathbb{R}}
\newcommand{\IC}{\mathbb{C}}
\newcommand{\IN}{\mathbb{N}}
\newcommand{\IZ}{\mathbb{Z}}
\newcommand{\II}{\mathbb{I}}
\newcommand{\au}{\underline{a}}
\newcommand{\cH}{\mathcal{H}}
\newcommand{\cK}{\mathcal{K}}
\newcommand{\cM}{\mathcal{M}}
\newcommand{\cO}{\mathcal{O}}
\newcommand{\cE}{\mathcal{E}}
\newcommand{\cG}{\mathcal{G}}
\newcommand{\cS}{\mathcal{S}}
\newcommand{\cP}{\mathcal{P}}
\newcommand{\cR}{\mathcal{R}}
\newcommand{\cC}{\mathcal{C}}
\newcommand{\cL}{\mathcal{L}}
\newcommand{\cT}{\mathcal{T}}
\newcommand{\cI}{\mathcal{I}}
\newcommand{\cV}{\mathcal{V}}
\renewcommand{\L}{\mathrm{L}}
\renewcommand{\S}{\mathrm{S}}
\newcommand{\cf}{\emph{cf.}}
\newcommand{\ie}{{\emph{i.e.}}}
\newcommand{\diag}{\mathrm{diag}}      
\renewcommand\Re{\operatorname{Re}}
\renewcommand\Im{\operatorname{Im}}
\DeclareMathOperator{\supp}{supp}
\DeclareMathOperator{\dist}{dist}
\DeclareMathOperator{\Hom}{Hom}
\DeclareMathOperator{\Ran}{Ran} 
\DeclareMathOperator{\Rank}{Rank} 
\DeclareMathOperator{\Ker}{Ker}
\DeclareMathOperator{\Dom}{Dom}
\DeclareMathOperator{\dom}{dom}
\DeclareMathOperator{\Gr}{Gr}
\numberwithin{equation}{section} 
\title{Hidden symmetries in non-self-adjoint graphs}
\author{Amru Hussein} 
\address{Department of Mathematics,
	TU Kaiserslautern,  Paul-Ehrlich-Stra\ss e 31, 67663 Kaiserslautern, Germany}
\email{hussein@mathematik.uni-kl.de}
\subjclass[2010]{Primary 34B45; Secondary 47A10, 81Q12, 47B44, 47D06} 
\keywords{Quantum graphs, Non-self-adjoint operators, non-self-adjoint boundary conditions, semigroup generation, similarity transforms to self-adjoint operators.}
\thanks{The author was partially supported by the MathApp – Mathematics Applied to Real-World Problems, part of the Research Initiative of the Federal State of Rhineland-Palatinate}
\begin{document}

\begin{abstract}
On finite metric graphs the set of all realizations of the Laplace operator in the edgewise defined $L^2$-spaces are studied. These are defined by coupling boundary conditions at the vertices most of which define non-self-adjoint operators. In [Hussein, Krej\v{c}i\v{r}\'{\i}k, Siegl, \emph{Trans. Amer. Math. Soc.}, 367(4):2921--2957, 2015] a notion of regularity of boundary conditions by means of the Cayley transform of the parametrizing matrices has been proposed. The main point presented here is that not only the existence of this Cayley transform is essential for basic spectral properties, but also its poles and its asymptotic behaviour. It is shown that these poles and asymptotics can be characterized using the quasi-Weierstrass normal form which exposes some ``hidden'' symmetries of the system. Thereby, one can analyse not only the spectral theory of these mostly non-self-adjoint Laplacians, but also the well-posedness of the time-dependent  heat-, wave- and Schrödinger equations on finite metric graphs as initial-boundary value problems.
In particular, the generators of $C_0$- and analytic semigroups and $C_0$-cosine operator functions can be characterized. On star-shaped graphs a characterization of generators of bounded $C_0$-groups and thus of operators similar to self-adjoint ones is obtained.
\end{abstract}

\maketitle
\section{Introduction}

Laplace operators on finite metric graphs are realization of minus the second derivative operator on each edge
with coupling boundary conditions on the vertices. The boundary conditions encode the geometry of the graph and model the transmission of information between the edges, and while the edgewise defined Laplacian is a quite elementary operator, the boundary conditions can contribute at times rather involved features.
Such graph or quasi-one-dimensional models are used for instance in quantum mechanics, where they are called quantum graphs, see e.g. \cite{BeKu_book} and references therein, and also in the modelling of a variety of wave and diffusion dynamics, compare e.g. \cite{Mug_book, Ali1994, Kuchment2002} just to mention a few. 

In the context of quantum mechanics \emph{self-adjoint} Laplacians are admissible Hamiltonians defining a unitary time evolution. In contrast, when studying the wave- or heat equation  certain \emph{non-self-adjoint} Laplacians are admissible as well. Even for the Schrödinger equation \emph{quasi-self-adjointness} of the Hamiltonian, that is, its similarity to a self-adjoint operator instead of self-adjointness, might be sufficient, and this possibility has been explored theoretically by some physicists and  mathematicians starting with the work \cite{SGH}, see also \cite{Znojil_book} for recent developments and further references.
The aim here is now to analyse the set of all possible  boundary conditions, and to study
how these general boundary conditions on the vertices influence spectral properties of the mostly \emph{non-self-adjoint} Laplacians on finite metric graphs in particular with regard to the well-posedness of the time-dependent linear heat-, wave-, and Schr\"odinger equations on finite metric graphs. 
The main result is a characterization of the generators of $C_0$-semigroups, analytic semigroups and $C_0$-$\cos$-families on general finite metric graphs by means of the coupling boundary conditions. For the case of star graphs, such a characterization of generators of $C_0$-groups is also obtained, but this does not carry over to  general  finite metric graphs.


\subsection{The theory of self-adjoint boundary conditions}
In order to develop a systematic theory for non-self-adjoint boundary conditions, the self-adjoint case is a natural starting point.
The case of Laplace operators on finite metric graphs with \emph{self-adjoint} boundary conditions is well-studied, and the set of self-adjoint boundary conditions can be characterized by classical extension theory, and also using quadratic forms.

Considering for instance a star-shaped graph consisting of $d$ copies of $[0,\infty)$ glued together at the origins as sketched in Figure~\ref{fig:finitemetricgraphs} (a), then 
linear boundary conditions of the form
\begin{align}\label{eq:AB}
A\psi(0) + B\psi^\prime(0)=0
\end{align}
for matrices $A,B\in \IC^{d\times d}$ define a self-adjoint Laplacian $-\Delta(A,B)$ in the edgewise defined $L^2$-space $L^2(\cG)=\oplus_{i=1}^dL^2([0,\infty);\IC)$ over the graph, \ie , a realization of minus the second derivative operator on each edge with the boundary condition \eqref{eq:AB}, if and only if 
\begin{align}\label{eq:KostrykinSchrader}
AB^\ast = BA^\ast \quad \hbox{and} \quad \Rank (A\, B)=d,
\end{align}
cf. \cite{KS1999, Harmer2000}. Here, $d$ is also equal to the deficiency indices for the minimal Laplacian defined on  $\oplus_{i=1}^dC_0^\infty((0,\infty);\IC)$, and by $\Ker (A\, B)$ these boundary conditions are related to maximal isotropic subspaces of $\IC^{2d}$. 


The link to the classical extension theory where self-adjoint extensions are parameterized by unitary maps is given by the local scattering matrix 
\begin{align*}
\mathfrak{S}(k,A,B) = - (A+ik B)^{-1} (A-ik B), \quad k>0.
\end{align*}
This is unitary and boundary conditions can equivalently be described by
\begin{eqnarray*}
	A_{\mathfrak{S}}:= - \frac{1}{2} \left(\mathfrak{S}(k,A,B) -\mathds{1}\right) & \mbox{and} & B_{\mathfrak{S}}:=  \frac{1}{2ik} \left(\mathfrak{S}(k,A,B) +\mathds{1}\right) \quad \hbox{for some } k>0,
\end{eqnarray*}
and the other way round this defines self-adjoint boundary conditions replacing $\mathfrak{S}(k,A,B)$ by any unitary matrix $U$.

Looking at the associated quadratic form, it is convenient to consider a parameterization using the orthogonal projector $P$ onto $\ker B$, and equivalent boundary conditions defined by
\begin{align*}
A' = P+L \quad \hbox{and} \quad B'=P^\perp \quad \hbox{with} \quad  L= (B\vert_{(\ker B)^\perp})^{-1}A.
\end{align*} 
Then $L=P^\perp L P^\perp$ and the boundary conditions \eqref{eq:AB} decompose into 
\begin{align*}
L\psi(0) + P^\perp\psi^\prime(0)=0 \quad \hbox{and} \quad P \psi(0)=0,
\end{align*}
that is into a Robin/Neumann part and a Dirichlet part, and the associated quadratic form can be determined to be given by 
\begin{equation*}
\int_{\cG} \abs{ \psi^{\prime}}^2 
- \langle L P^{\perp}\underline{\psi},P^{\perp}\underline{\psi}\rangle_{\cK} \quad \hbox{for }
\psi\in  \{ \varphi\in H^1(\cG,\au) \mid P\underline{\varphi}=0\}.
\end{equation*}
For the case of self-adjoint boundary conditions, these three parametrizations -- by maximal isotropic subspaces, unitary matrices, and quadratic forms --  are equivalent, compare \cite[Theorem 1.4.4]{BeKu_book}. However, for non-self-adjoint boundary conditions each of the three viewpoints presented above leads to a different subclass of non-self-adjoint operators. 

 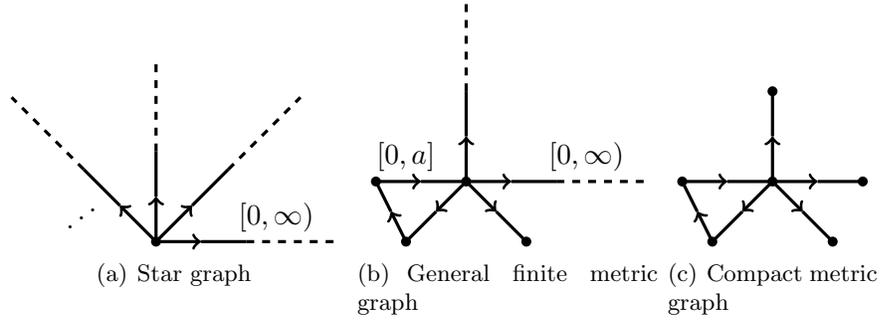
\begin{figure}[h]
	\begin{center}
		\subfigure[Star graph]{
			\begin{tikzpicture}[scale=0.4]
			\fill[black] (0,0) circle (1ex);
			\draw[->, black, very thick] (0,0) -- (1.5,0);
			\draw[black, very thick] (1.5,0) -- (3,0);
			\draw[black, very thick, dashed] (3,0) -- (6,0);
			\draw[->, black, very thick] (0,0) -- (0, 1.5);
			\draw[black, very thick] (0, 1.5) -- (0,3);
			\draw[black, very thick, dashed] (0,3) -- (0,6);
			\node[above] at (4, 0) {$[0,\infty)$};
			\draw[->, black, very thick] (0,0) -- (1.2, 1.2);
			\draw[black, very thick] (1.2, 1.2) -- (2.4,2.4);
			\draw[black, very thick, dashed] (2.4,2.4) -- (4.8,4.8);
			\draw[->, black, very thick] (0,0) -- (-1.2, 1.2);
			\draw[black, very thick] (-1.2, 1.2) -- (-2.4,2.4);
			\draw[black, very thick, dashed] (-2.4,2.4) -- (-4.8,4.8);
			\node[above] at (-2.5, 0) {$\iddots$};
			\end{tikzpicture}
		}
		\subfigure[General finite metric graph]{
			\begin{tikzpicture}[scale=0.4]
			\fill[black] (0,0) circle (1ex);
			\draw[black, very thick] (0,0) -- (-1.5,0);
			\draw[<-, black, very thick] (-1.5,0) -- (-3,0);
			\fill[black] (-3,0) circle (1ex);
			\draw[->, black, very thick] (0,0) -- (1,-1);
			\draw[black, very thick] (1,-1) -- (2,-2);
			\fill[black] (2,-2) circle (1ex);
			\draw[->, black, very thick] (-2,-2) -- (-2.5,-1);
			\draw[black, very thick] (-2.5,-1) -- (-3,0);
			\draw[->, black, very thick] (0,0) -- (-1,-1);
			\draw[black, very thick] (-1,-1) -- (-2,-2);
			\fill[black] (-2,-2) circle (1ex);
			\draw[->, black, very thick] (0,0) -- (1.5,0);
			\draw[black, very thick] (1.5,0) -- (3,0);
			\draw[black, very thick, dashed] (3,0) -- (6,0);
			\draw[->, black, very thick] (0,0) -- (0, 1.5);
			\draw[black, very thick] (0, 1.5) -- (0,3);
			\draw[black, very thick, dashed] (0,3) -- (0,6);
			\node[above] at (4, 0) {$[0,\infty)$};
			\node[above] at (-2, 0) {$[0,a]$};
			\end{tikzpicture}
		}
		\subfigure[Compact metric graph]{	
			\begin{tikzpicture}[scale=0.4]
			\fill[black] (0,0) circle (1ex);
			\draw[black, very thick] (0,0) -- (-1.5,0);
			\draw[<-, black, very thick] (-1.5,0) -- (-3,0);
			\fill[black] (-3,0) circle (1ex);
			\draw[->, black, very thick] (0,0) -- (1,-1);
			\draw[black, very thick] (1,-1) -- (2,-2);
			\fill[black] (2,-2) circle (1ex);
			\draw[->, black, very thick] (-2,-2) -- (-2.5,-1);
			\draw[black, very thick] (-2.5,-1) -- (-3,0);
			\draw[->, black, very thick] (0,0) -- (-1,-1);
			\draw[black, very thick] (-1,-1) -- (-2,-2);
			\fill[black] (-2,-2) circle (1ex);
			\draw[->, black, very thick] (0,0) -- (1.5,0);
			\draw[black, very thick] (1.5,0) -- (3,0);
			\fill[black] (3,0) circle (1ex);
			\draw[->, black, very thick] (0,0) -- (0, 1.5);
			\draw[black, very thick] (0, 1.5) -- (0,3);
			\fill[black] (0,3) circle (1ex);	
			\end{tikzpicture}
		}
		\caption{Finite metric graphs} \label{fig:finitemetricgraphs}
	\end{center}
\end{figure}

\subsection{Some features of non-self-adjointness}
To illustrate the difficulties with general \emph{non-self-adjoint} boundary conditions, consider the Laplacians $-\Delta(A_{\tau},B_{\tau})$ defined on two copies of $[0,\infty)$ by boundary conditions of the form \eqref{eq:AB} with
\begin{eqnarray}\label{eq:AtauBtau}
	A_{\tau}=\begin{bmatrix} 1 & -e^{i\tau} \\ 0 & 0\end{bmatrix} &\mbox{and} & B_{\tau}=\begin{bmatrix}  0 & 0 \\ 1 & e^{-i\tau} \end{bmatrix} \quad \hbox{for } \tau \in [0,\pi/2].
\end{eqnarray} 
Identifying the two copies of $[0,\infty)$ with the real line, this translates to
\begin{align*}
	\psi(0+) = e^{i\tau} \psi(0-)
	\quad\mbox{and}\quad
	\psi^{\prime}(0+) = e^{-i\tau} \psi^{\prime}(0-) \quad \hbox{for } \tau \in [0,\pi/2],
\end{align*}
where $0+$ and $0-$ denote the limits from the right and the left hand side, respectively.
This is an example for so-called $\mathcal{PT}$-symmetric point interactions, and it has been studied e.g. in~\cite{Kurasov02, Petr, Kuzel05}, and it has been the model case for the analysis in \cite{HKS2015}.  

So, for $\tau=0$, $-\Delta(A_{0},B_{0})$ corresponds to the Laplacian on $\IR$, while for $\tau \in (0,\pi/2]$ the form
\begin{equation*}
	\langle -\Delta(A_{\tau},B_{\tau})\psi,\psi\rangle = \int_{\IR} \abs{\psi^{\prime}}^2 + (1-e^{2i\tau}) \psi(0+)\overline{\psi^{\prime}(0+)},
\end{equation*}
is not symmetric, and therefore $-\Delta(A_{\tau},B_{\tau})$ cannot be self-adjoint for $\tau \in (0,\pi/2]$. Moreover, due to the trace term $\psi^{\prime}(0+)$, which cannot be balanced by lower order terms, one can show that this form is not even closable, and therefore the approach by quadratic forms in the usual $L^2(\IR)$-space fails for this operator. Nevertheless, one can show that the operators $-\Delta(A_{\tau},B_{\tau})$ for $\tau \in (0,\pi/2)$ are similar to the self-adjoint Laplacian $-\Delta(A_{0},B_{0})$, while for $\tau=\pi/2$, the spectrum of $-\Delta(A_{\pi/2},B_{\pi/2})$ is the entire  complex plain, see e.g. \cite[Section 6.4]{HKS2015}.

For self-adjoint $-\Delta(A,B)$ the heat-, wave, and Schrödinger equations  are well-posed which in terms of operator theory means that $\Delta(A,B)$ generates a $C_0$-semigroup and a $C_0$-$\cos$-family, and that $i\Delta(A,B)$ generates a $C_0$-group, respectively. However, apart from these self-adjoint Laplacians there are also many \emph{non-self-adjoint} Laplacians for which these equations are well-posed. 
In the abstract situation, these generation properties can be  characterized in terms of resolvent estimates as sketched roughly in Figure~\ref{fig:resolvent_estimates}.
To study generation properties -- instead of verifying resolvent estimates of the type sketched in Figure~\ref{fig:resolvent_estimates} directly --  one  relies often on convenient criteria using forms and the numerical range of an operator. Taking the operator $-\Delta(A_{\tau},B_{\tau})$ discussed above as a guiding example, one sees that the form approach is not directly applicable here, but nevertheless -- since it is similar to the Laplacian on the real line -- each of the heat-, wave-, and Schrödinger equation for $-\Delta(A_{\tau},B_{\tau})$ is well-posed for $\tau \in [0,\pi/2)$.
This illustrates the well-known fact that the numerical range is only stable under unitary similarity transforms but not under general similarity transforms while resolvent estimates and spectra -- and therefore generation properties  -- are stable under general similarity transforms. In particular, in order to classify all generators  of $C_0$-semigroups,  $C_0$-$\cos$-families, and $C_0$-groups the direct application of form methods disregarding similarity relations is in general not sufficient, and cases just as the operators $-\Delta(A_{\tau},B_{\tau})$ for $\tau \in (0,\pi/2)$ might be overlooked at first sight.

%
%
%

%
%
%

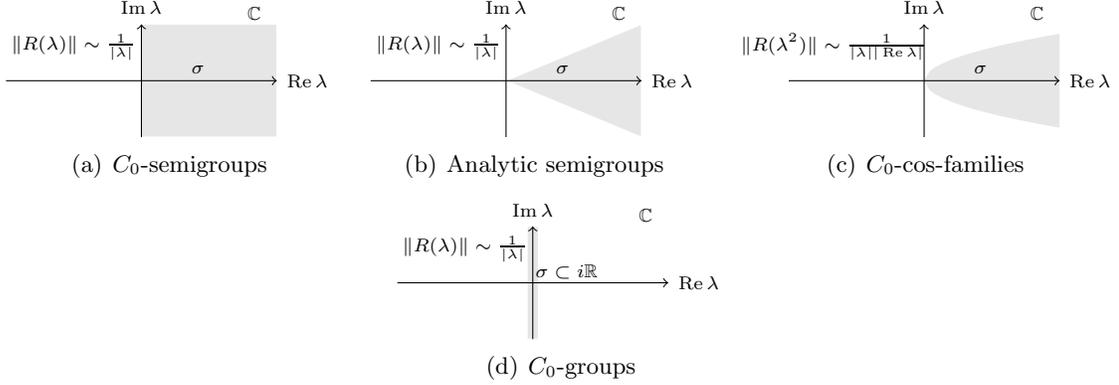
\begin{figure}[h]
	\begin{center}
		\subfigure[$C_0$-semigroups]{
			\begin{tikzpicture}[scale=1.5]
			\filldraw [draw=white, fill=gray!20!white] (0,-0.5) -- (0,0.5) -- (1.2,0.5) -- (1.2,-0.5) -- (0,-0.5) -- cycle;
			\draw[->] (-1.2,0) -- (1.2,0) node[right] {\tiny{$\Re \lambda$}};
			\draw[->] (0,-0.5) -- (0,0.5) node[above] {\tiny{$\Im \lambda$}};
			\draw (1,0.6) node {\tiny{$\IC$}};
			\draw (-0.6,0.3) node {\tiny{$\norm{R(\lambda)} \sim \tfrac{1}{|\lambda|}$}};
			\draw (0.5,0.1) node {\tiny{$\sigma$}};
			\end{tikzpicture}
		}	
		\subfigure[Analytic semigroups]{
			\begin{tikzpicture}[scale=1.5]
			\filldraw [draw=white, fill=gray!20!white] (0,0) -- (1.2,0.5) -- (1.2,-0.5) -- (0,0) -- cycle;
			\draw[->] (-1.2,0) -- (1.2,0) node[right] {\tiny{$\Re \lambda$}};
			\draw[->] (0,-0.5) -- (0,0.5) node[above] {\tiny{$\Im \lambda$}};
			\draw (1,0.6) node {\tiny{$\IC$}};
			\draw (-0.6,0.3) node {\tiny{$\norm{R(\lambda)} \sim \tfrac{1}{|\lambda|}$}};
			\draw (0.5,0.1) node {\tiny{$\sigma$}};
			\end{tikzpicture}
		}	
		\subfigure[$C_0$-$\cos$-families]{
			\begin{tikzpicture}[scale=1.5]
			\fill [gray!20!white, domain=0:1.2, variable=\x]
			(0, 0)
			-- plot ({\x}, {0.38*sqrt(\x)})
			-- (1.2, 0)
			-- cycle;
			\fill [gray!20!white, domain=0:1.2, variable=\x]
			(0, 0)
			-- plot ({\x}, {-0.38*sqrt(\x)})
			-- (1.2, 0)
			-- cycle;
			\draw[->] (-1.2,0) -- (1.2,0) node[right] {\tiny{$\Re \lambda$}};
			\draw[->] (0,-0.5) -- (0,0.5) node[above] {\tiny{$\Im \lambda$}};
			\draw (1,0.6) node {\tiny{$\IC$}};
			\draw (-0.8,0.3) node {\tiny{$\norm{R(\lambda^2)} \sim \tfrac{1}{|\lambda| |\Re \lambda|}$}};
			\draw (0.5,0.1) node {\tiny{$\sigma$}};
			\end{tikzpicture}
		}	
		\subfigure[$C_0$-groups]{
			\begin{tikzpicture}[scale=1.5]
			\filldraw [draw=white, fill=gray!20!white] (0.05, -0.5) -- (0.05,0.5) -- (-0.05,0.5) -- (-0.05,-0.5) -- (0.05, -0.5) -- cycle;
			\draw[->] (-1.2,0) -- (1.2,0) node[right] {\tiny{$\Re \lambda$}};
			\draw[->] (0,-0.5) -- (0,0.5) node[above] {\tiny{$\Im \lambda$}};
			\draw (1,0.6) node {\tiny{$\IC$}};
			\draw (-0.6,0.3) node {\tiny{$\norm{R(\lambda)} \sim \tfrac{1}{|\lambda|}$}};
			\draw (0.3,0.1) node {\tiny{$\sigma\subset i\IR$}};
			\end{tikzpicture}
		}	
		\caption{Location of spectra $\sigma$ (inside the grey regions) and resolvent estimates (outside the grey regions) for different types of generators}\label{fig:resolvent_estimates}
	\end{center}
\end{figure}

\subsection{A classification of non-self-adjoint boundary conditions}
After the direct application of the form approach has been ruled out,    
another possible starting point to study general boundary conditions including all \emph{non-self-adjoint} ones, could be to attempt a generalization of the Kostrykin-Schrader parametrization \eqref{eq:KostrykinSchrader}.
Then one first observes that  the condition $\Rank (A\, B)=d$, \ie , that one has $d$ linearly independent boundary conditions, is necessary for basic spectral properties such as a non-empty resolvent set. However this is not sufficient, 
as for the example discussed above $\Rank (A_{\pi/2}\, B_{\pi/2})=2$, while the resolvent set is empty, and one can construct further examples where even the spectrum is empty, see e.g. Example~\ref{ex:emptyspec} below.
To exclude such cases, in addition some kind of regularity assumption is needed. 

For self-adjoint boundary conditions of the form \eqref{eq:AB} defined by matrices $A,B$ satisfying \eqref{eq:KostrykinSchrader},  the local scattering matrix is given by minus the Cayley transform of these matrices
\begin{align*}
\mathfrak{S}(k,A,B) = - (A+ik B)^{-1} (A-ik B), \quad k>0,
\end{align*}
and this object appears in many instances such as in the Green's function or in the eigenvalue equation for the corresponding Laplacian, cf. e.g. \cite{KS1999, KS2006}.
This motivated the notion of regularity of boundary conditions proposed in \cite{HKS2015} by  Krej\v{c}i\v{r}\'{\i}k, Siegl, and the author, where boundary conditions defined by matrices $A,B$ are \emph{regular} if their Cayley transform exists for some $k\in \IC$. 
This regularity assumption implies many basic spectral properties while the  example $-\Delta(A_{\pi/2}, B_{\pi/2})$ mentioned above is defined by irregular boundary conditions.
More specifically, here it is shown that \emph{irregular boundary conditions} are exactly those  which define operator with exponentially growing resolvent norm, \ie,  
\begin{align*}
\cO(e^{c\sqrt{|\lambda|}}) \lesssim \norm{(-\Delta(A,B)-\lambda)^{-1}} \quad \hbox{as } \lambda\to -\infty,
\end{align*}
and in particular these cannot be generators of $C_0$-semigroups,  $C_0$-$\cos$-families, and $C_0$-groups.

\subsection{Regularity assumptions and hidden structures}
Turning  now to regular boundary  conditions, the key observation communicated here is that the quasi-Weierstrass normal form reveals some additional structure or ``hidden'' symmetry of the operator, and thereby it allows for a systematic analysis of Laplacians defined by regular boundary conditions. The quasi-Weierstrass normal form has been introduced in the context of algebraic differential equations by Berger,  Ilchmann, and Trenn, see~\cite{BIT2012}, and for regular boundary conditions it implies that there are equivalent boundary conditions defined by
	\begin{align}\label{eq:qWnf}
A= G^{-1}\begin{bmatrix}
L & 0 \\ 0 & \mathds{1}_{\IC^{d-m}}
\end{bmatrix}G \quad \hbox{and} \quad 
B= G^{-1}\begin{bmatrix}
\mathds{1}_{\IC^{m}} & 0 \\ 0 & N_B
\end{bmatrix}G, 
\end{align} 
for a similarity transform $G$, a block decomposition $\IC^{d}=\IC^{d-m}\times \IC^{m}$ with $0\leq m\leq d$, a nilpotent matrix $N_B\in \IC^{(d-m)\times (d-m)}$, and a matrix $L\in \IC^{m\times m}$. Therefrom one can deduce that the Cayley transform is uniformly bounded outside its poles if and only if $N_B\equiv 0$, and the order of the poles of $\mathfrak{S}(\cdot,A,B)$ is related to $N_B$ and the Jordan normal form of $L$, see Subsection~\ref{subsect:quasiWeierstrass}. 
Taking then advantage of this structure 
one can characterize the generators of $C_0$-semigroups in $L^2$-spaces. These are defined exactly by those regular boundary conditions with $N_B\equiv 0$, see Theorem~\ref{thm:semigroup} below which is one of the major findings of this article. Moreover, these $C_0$-semigroups extend to analytic semigroups, and its generators also generate $C_0$-cosine operator function. Thus the question of the well-posedness of the heat and wave equation is completely answered in terms of boundary conditions. A heuristic explanation for the effect of $N_B\neq 0$ is discussed in Subsection~\ref{exDS}. In particular, if $N_B\neq 0$, then the Cayley transform is not uniformly bounded as $|k|\to \infty$, and as a result the resolvent does not exhibit the decay sketched in Figure~\ref{fig:resolvent_estimates}.

There has been a number of results on some classes of boundary conditions defining generators of analytic semigroups where in forms of the type 
\begin{equation*}
\langle -\psi'', \psi \rangle
=\int \abs{ \psi^{\prime}}^2 
+ \langle \psi'(0),\psi(0)\rangle_{\IC^d}
\end{equation*}
the trace of the derivative $\psi'(0)$ can be balanced by terms involving only the trace $\psi(0)$. More concretely,  if the boundary condition 
\eqref{eq:AB} can be represented equivalently  by 
\begin{align*}
A= L + P \quad \hbox{and} \quad B=P^\perp
\end{align*}
for an orthogonal projector $P$ and an operator $L$ acting in the range of $P^\perp=\mathds{1}-P$, then \eqref{eq:AB} becomes
$P^{\perp}\psi'(0)+L\psi(0)=0$  and  $P\psi(0)=0$,
see \cite{Kramar2007, Mug07, Mug_book}. However, as it turns out, not all generators of $C_0$-semigroups fall under this category, e.g. the boundary conditions defined by \eqref{eq:AtauBtau} above.  It seems that these cases have not been addressed  in the literature so far, and the characterization  presented here closes this gap.

\subsection{A generalized  reflection principle}
The methods to prove the generation properties for general regular boundary conditions with $N_B=0$ is to alter the scalar product such that the operators $-\Delta(A,B)$ can still be associated with closed sectorial forms of Lions type which implies the generation of analytic semigroups and $C_0$-cosine operator functions. This is elaborated in Subsection~\ref{subsec:forms}. One key observation in \cite{HKS2015} has been  that on star graphs with $d$ edges, matrices $G\in \IC^{d\times d}$ induce maps in function spaces by
\begin{align*}
 L^2(\cG) \rightarrow L^2(\cG), \quad  \psi  \mapsto G \psi, \quad \psi = \begin{bmatrix}
\psi_1 \\ \vdots \\ \psi_d
\end{bmatrix} \in L^2(\cG)=\oplus_{i=1}^dL^2([0,\infty);\IC), 
\end{align*} 
and therefore relations of the type \eqref{eq:qWnf} induces a similarity relation for the corresponding operators on star graphs. In the simplest case of the Laplacian on the real line, one example for such a similarity relation are the even and odd reflections, and therefore this way to construct similarity transforms is referred to here as \emph{generalized  reflection principle}.  
In general this generalized  reflection principle however cannot be translated to graphs with internal edges, because then the number of the edges and of their endpoints do not coincide any more. However, using a cut-off and extension procedure, this can be localized in a sense to construct an equivalent scalar product.     
Note that in contrast to regular boundary conditions, for irregular boundary conditions the resolvent has even an exponential growth and for boundary conditions with $N_B\neq 0$, the resolvent does not decay fast enough or even grows polynomially as $\lambda \to -\infty$, see Subsections~\ref{subsec:resolvent_reg} and \ref{subsec:irr}.  

\subsection{Quasi-Hermitian operators on star graphs}
It should be highlighted that the characterization of generators of $C_0$-semigroups involves -- just as for self-adjointness -- only the boundary conditions and not any other geometric feature. 
In contrast, for the question if a Laplacian is the generator of a bounded $C_0$-group, \ie , if it is similar to a self-adjoint operator, the geometry matters. Here, it is shown, that for the relatively simple geometry of a star graph, where each edge is identified with $[0,\infty)$, similarity to a self-adjoint operator can be characterized in terms of the poles of the Cayley transform, see Theorem~\ref{thm:similarity} below. The proof of this theorem relies on the generalized reflection principle on star graphs, and for other geometries the characterization is in general no longer true, that is, using the same boundary conditions in a different geometric setting can destroy the similarity relation, see Subsection~\ref{subsec:geometry}. 
The characterization obtained here can be seen as a generalization to finite star graphs of theorems on particular point interactions on the real line by Mostafazadeh, see \cite{Mostafazadeh2006}, and Grod and Kuzhel \cite{GrodKuzhel2014}. 
The study of operators similar to self-adjoint ones is relevant in the so-called non-Hermitian quantum mechanics, see e.g. \cite{Bender07, SGH, Mostafazadeh2010} and the references therein. Some $\cP\cT$-symmetric operator fall also in this class, see e.g. \cite{Kurasov02, Kuzel05, Krej2010, Krej2019} and the many references given therein. A generalization of $\cP\cT$-symmetry to star graphs is proposed and analysed in \cite{Astudillo2015}. The general situation of  non-self-adjoint
extensions of symmetric operators with only absolutely continuous spectrum have been studied in \cite{Kiselev2009, Kiselev2011, Kiselev2000}, see also the references therein.

\subsection{Structure and outline}
In the subsequent Section~\ref{sec:Laplace} basic concepts and definitions are recapitulated. The notion of regularity of boundary conditions and  properties of  Cayley transforms are elaborated in Section~\ref{Sec:bc}, and some differences to the classical Birkhoff-Tamarkin theory and its notion of regularity are exemplified in Subsection~\ref{subsec:birkhoff}. The characterization of semigroup generators is discussed in Section~\ref{sec:semigroup}. There also the completeness of the root vectors for compact graphs is addressed. Section~\ref{sec:similarity} deals with the similarity to self-adjoint operators on star graphs.

\section{Laplacians on finite metric graphs}\label{sec:Laplace}

\subsection{Finite metric graphs}
A \emph{graph} is a $4$-tuple 
$$\cG = \left( \cV, \cI,\cE, \partial \right)$$  
with the set of \textit{vertices} $\cV$, 
 the set of \textit{internal edges} $\cI$,  
and  the set of \textit{external edges} $\cE$, 
and summing up $\cE \cup \cI$ is the set of \textit{edges}.
The \textit{boundary map} $\partial$ assigns to each internal edge $i\in \cI$ 
an ordered pair of vertices $\partial (i)=\left(\partial_-(i),\partial_+(i)\right)\in \cV \times \cV$, 
where $\partial_-(i)$ is called its \textit{initial vertex} 
and $\partial_+(i)$ its \textit{terminal vertex}. 
Each external edge $e\in \cE$ is mapped by $\partial$ onto a single, 
its initial vertex. 
A graph is called \textit{finite} if $\abs{\cV}+\abs{\cI}+\abs{\cE}<\infty$. Instead of using a boundary map, the structure of the graph can also be encoded by the adjacency matrix, cf. e.g. \cite[Section 1.1]{BeKu_book}.

A graph $\cG$ can be endowed with the following metric structure. 
Each internal edge $i\in \cI$ is identified 
with an interval $[0,a_i]$, with $a_i>0$, 
such that its initial vertex corresponds to~$0$ 
and its terminal vertex to~$a_i$. 
Each external edge $e\in \cE$ is identified with the half line $[0,\infty)$ 
such that $\partial(e)$ corresponds to~$0$. 
The numbers~$a_i$ are called \textit{lengths} of the internal edges $i\in \cI$, 
and they are summed up into the vector 
$$\au=\{a_i\}_{i\in \cI}\in (0,\infty)^{\abs{\cI}}.$$
The $2$-tuple consisting of a finite graph endowed with a metric structure is called a \textit{metric graph} $(\cG,\au)$. 
The metric on $(\cG,\au)$ is defined via minimal path lengths, and $(\cG,\au)$ is \textit{compact} if $\cE=\emptyset$. The notations used here largely parallel those in the works of Kostrykin and Schrader, see e.g. \cite{KS2006}. The construction of a metric graph as quotient space is discussed by Mugnolo in \cite{Mugnolo_whatisagraph}, where a metric graph consists of intervals glued together at their end points according to the structure of a given graph. For further details see e.g. \cite[Section 1.3]{BeKu_book}.

\subsection{Function spaces on finite metric graphs}
A function $\psi \colon (\cG,\au) \rightarrow \IC$ can be written as 
\begin{eqnarray*}
	\psi(x)= \psi_j(x), & \mbox{where} & \psi_j \colon I_j \rightarrow \IC \quad \hbox{with} \quad I_j= \begin{cases} [0,a_j], & \mbox{if} \ j\in \cI, \\ [0,\infty), &\mbox{if} \ j\in \cE,  \end{cases}
\end{eqnarray*}
where with a slight abuse of notation ambiguities on the vertices are admitted.
Occasionally, also $\psi_j(x)=\psi_j(x_j)$  is written  where $x=x_j\in I_j$. Equipping each edge of the finite metric graph with
the one-dimensional Lebesgue measure 
one obtains a measure space, and then one defines 
\begin{equation*} 
\int_{\cG} \psi := \sum_{i\in \cI}  \int_{0}^{a_i}  \psi(x_i) \, dx_i +  \sum_{e\in \cE}  \int_{0}^{\infty}  \psi(x_e)\, dx_e, 
\end{equation*}
where $dx_{i}$ and $dx_e$ refers to integration with respect to the Lebesgue measure on the intervals $[0,a_i]$ and $[0,\infty)$, respectively. 
Given a finite metric graph $(\cG,\au)$, one then considers the Hilbert space
\begin{eqnarray*}
L^2(\cG,\au)=  \bigoplus_{j\in\cI\cup \cE} L^2(I_j;\IC) \quad \hbox{with} \quad \langle \psi,\varphi\rangle =\int_{\cG} \psi\, \overline{\varphi}.
\end{eqnarray*}    
Using the Sobolev spaces $H^1(I_j;\IC)$ and  $H^2(I_j;\IC)$ on each edge, one defines
\begin{align*}
	H^1(\cG,\au)&=  \bigoplus_{j\in\cI\cup \cE} H^1(I_j;\IC) \quad \hbox{with} \quad \langle \psi,\varphi\rangle_{H^1} =\langle \psi,\varphi\rangle + \langle \psi',\varphi'\rangle, \\
	H^2(\cG,\au)&=  \bigoplus_{j\in\cI\cup \cE} H^2(I_j;\IC) \quad \hbox{with} \quad \langle \psi,\varphi\rangle_{H^2} =\langle \psi,\varphi\rangle_{H^1} + \langle \psi'',\varphi''\rangle, \\
	H_0^2(\cG,\au)&=  \bigoplus_{j\in\cI\cup \cE} H^2_0(I_j;\IC),
\end{align*}    
where  $\psi'$ and $\psi''$ denote the edgewise defined first and second distributional derivatives,
\begin{align*}
	\left( \psi'\right)_j (x) 
= \tfrac{d}{dx}\psi_j(x) \quad \hbox{and} \quad	\left( \psi''\right)_j (x) 
= \tfrac{d^2}{dx^2}\psi_j(x) \quad \hbox{for }j\in \cE\cup \cI,  \quad x\in I_j,
\end{align*}
 and
$H_0^2(I_j;\IC)$ with $j\in \cE \cup \cI$ denotes the set of all $\psi_j\in H^2(I_j;\IC)$  with
\begin{align*}
	\psi_j(0)=0, \ \psi_j^{\prime}(0)=0,  \mbox{for}  \ j\in \cE,  \ 
	\psi_j(0)=0,  \ \psi_j^{\prime}(0)=0, \ \psi_j(a_j)=0, \  \psi_j^{\prime}(a_j)=0, \ \mbox{for} \ j\in \cI. 
\end{align*}
The notations are sometimes shortened to $L^2(\cG), H^1(\cG), H^2(\cG)$, and $H^2_0(\cG)$. These edgewise defined Sobolev spaces are suitable  for the study of general coupling boundary conditions on finite metric graphs, see Subsection~\ref{subsec:Laplacians} below.  
However, in the literature Soblev spaces are often defined more naturally with respect to the underlying metric structure of $(\cG,\au)$. For instance the Soblev space of order one is defined as the space  of functions $\psi \in H^1(\cG)$ such that $\psi$ is continuous  on the metric space $(\cG,\au)$, compare e.g. \cite[Definition 1.3.6]{BeKu_book} and  \cite[Section 3]{Mugnolo_whatisagraph}.

\subsection{Laplacians and boundary conditions}\label{subsec:Laplacians}
One defines maximal and minimal Laplace operators in $L^2(\cG,\au)$ by
\begin{align*}
\Delta^{\max} \psi &=\psi'' \hbox{ with } \Dom(\Delta^{\max})=H^2(\cG,\au) \quad \hbox{and}\quad \\
\Delta^{\min} \psi &=\psi'' \hbox{ with } \Dom(\Delta^{\min})=H_0^2(\cG,\au). 
\end{align*}
It is known that the operator $\Delta^{\min}$ is a closed symmetric operator with deficiency indices $(d,d)$, where one sets
\begin{equation}\label{d-notation}
d=\abs{\cE}+ 2\abs{\cI},
\end{equation}
and its Hilbert space adjoint is $(\Delta^{\min})^{\ast}=\Delta^{\max}$,
see e.g. \cite[Section~4.8]{Exnerbook}. 
The scope here are general realizations $\Delta'$ of the Laplacian on metric graphs with
\begin{equation}\label{extensions}
\Delta^{\min} \subset \Delta' \subset \Delta^{\max}.
\end{equation}
These realizations $\Delta'$ 
can be discussed in terms of boundary or matching conditions imposed at the endpoints of the edges. To this end, one defines the trace vectors
\begin{eqnarray*}
	\underline{\psi}= \begin{bmatrix} \{\psi_{e}(0)\}_{e\in\cE} \\ 
		\{\psi_{i}(0)\}_{i\in\cI} \\
		\{\psi_{i}(a_i)\}_{i\in\cI}
	\end{bmatrix}, \quad 
	\underline{\psi}^{\prime}= \begin{bmatrix} \{\psi_{e}^{\prime}(0)\}_{e\in\cE} \\ 
		\{\psi_{i}^{\prime}(0)\}_{i\in\cI} \\
		\{-\psi_{i}^{\prime}(a_i)\}_{i\in\cI} 
	\end{bmatrix}, \quad \mbox{and} \quad
[\psi]= \begin{bmatrix}
	\underline{\psi} \\ \underline{\psi^{\prime}}\end{bmatrix}
\quad \hbox{for } \psi\in H^2(\cG,\au),
\end{eqnarray*}
and one introduces the auxiliary Hilbert space
\begin{equation*}
\cK \equiv \cK(\cE, \cI) = \cK_{\cE}  \oplus \cK_{\cI}^- \oplus \cK_{\cI}^+
\end{equation*}
with $\cK_{\cE} = \IC^{\abs{\cE}}$ and $\cK_{\cI}^{\mp} = \IC^{\abs{\cI}}$. Then for $\psi\in H^2(\cG,\au)$ one has $\underline{\psi}, \underline{\psi}^{\prime}\in \cK$, and $[\psi]\in \cK^2$. 
Since the quotient space $\Dom (\Delta^{\max})/ \Dom (\Delta^{\min})\cong \cK^2$, any realization $\Delta'$ satisfying~\eqref{extensions}  
is associated with a subspace $\cM\subset \cK^2$ of the space of boundary values such that 
\begin{equation*}
\Delta'=\Delta(\cM), \quad \hbox{where} \quad \Delta(\cM)\psi = \psi'' \hbox{ and } \Dom(\Delta(\cM))= \{ \psi \in H^2(\cG,\au) \colon [\psi] \in \cM   \}.
\end{equation*}

\section{Boundary conditions and Cayley transforms}\label{Sec:bc}

\subsection{Parametrization of boundary conditions}
Recall that 
there are various ways to parametrise the subspaces $\cM\subset \cK^2$, and for the self-adjoint case some have been summarized in \cite[Section 3]{HKS2015}, see also \cite[Theorem 1.4.4]{BeKu_book}. The number of linearly independent boundary conditions imposed is equal to $\dim \cM$.  In particular if $\dim \cM \geq d$, for the deficiency index $d$ given by \eqref{d-notation}, then there exist  $A,B\in \Hom(\cK)$ such that
\begin{equation*}
\cM  =\cM(A,B)=\Ker (A, \, B)  \quad \hbox{and} \quad \Delta(A,B)=\Delta(\cM(A,B)),
\end{equation*}
where
\begin{eqnarray*}
	(A, \, B)\colon \cK^2 \rightarrow \cK, & (A, \, B) (\chi_1, \chi_2)^T = A\chi_1 + B \chi_2 & \mbox{for } \chi_1,\chi_2\in\cK.
\end{eqnarray*} 
An equivalent description of $\Delta(A,B)$ given
by the 
linear 
boundary conditions defined by $A,B \in \Hom(\cK)$ is then
\begin{equation}\label{AB}
\Delta(A,B)\psi = \psi'' \hbox{ and } \Dom(\Delta(A,B))=\{\psi\in H^2(\cG,\au)\colon A \underline{\psi} + B \underline{\psi}^{\prime} = 0 \}.
\end{equation}
Note that the parametrisation by the matrices $A$ and $B$ is not unique. 
Indeed, operators $\Delta(A,B)$ and $\Delta(A^{\prime},B^{\prime})$ agree if and only if the corresponding spaces $\cM(A,B)$ and $\cM(A^{\prime},B^{\prime})$ agree. Boundary conditions defined by $A,B\in \Hom(\cK)$ and $A^{\prime},B^{\prime}\in \Hom(\cK)$ are called \emph{equivalent} if $$\cM(A,B)=\cM(A^{\prime},B^{\prime}),$$
compare \cite[Definition 2.2]{KSP2008} where in addition a rank condition is assumed. Notice that the boundary conditions 
are equivalent if there exists 
an invertible operator $C\in GL(\cK)$ such that simultaneously 
\begin{eqnarray*}
	A^{\prime} = CA & \mbox{and} & B^{\prime} = CB.
\end{eqnarray*}

\subsection{Regular boundary conditions and the quasi-Weierstrass normal form}\label{subsect:quasiWeierstrass}

 Operator pencils  $ A-\lambda B$ with $\lambda \in \IC$ appear in many contexts such as generalized eigenvalue problems or  differential algebraic equations, see e.g. \cite{BIT2012, Markus_book} and the references therein. A basic regularity assumption is the following, cf. \cite{BIT2012}.
\begin{definition}[Regularity of operator pencils]
	For  $d\in \IN$ let $A,B\in \IC^{d\times d}$.	Then the operator pencil $A-\lambda B$, $\lambda\in \IC$, is called {\em regular} if $\det (A + \lambda B) \neq 0$ for some $\lambda\in \IC$ and {\em irregular} otherwise.   
\end{definition} 

For matrices $A,B$ defining a regular operator pencil, one can define the Cayley transform
\begin{equation*}
\mathfrak{S}(k,A,B):= - \left(A+ik B \right)^{-1}\left(A-ik B \right)
\end{equation*}
for all $k\in \IC$ except an at most finite set, because $\det(A-ik B)$ is a non-vanishing polynomial of degree at most $d$. 
In fact the Cayley transform has been the starting point for the introduction of a notion of \emph{regular boundary conditions} in \cite[Definition 3.2]{HKS2015} not knowing that the analogous notion of \emph{regular operator pencils} already had  existed in a different context. 

The Cayley transform is an objects which appears in many instances of the analysis of Laplacians on graphs. For example for self-adjoint Laplacians on star graphs with $\cI=\emptyset$, $\mathfrak{S}(k,A,B)$ for $k>0$ has the interpretation of a scattering matrix, and for general finite metric graphs it is referred to as local scattering matrix, see e.g. \cite{KS1999}, and it also appears naturally in the analysis of non-self-adjoint Laplacians when one formulates the eigenvalue equation or the Green's function, see \cite{KSP2008}.

\begin{definition}[Regularity of boundary condition, cf. Definition 3.2 in \cite{HKS2015}]\label{def:irregularbc} Let $A,B\in \Hom(\cK)$  with $\Rank (A,\, B)=d$. Then the boundary conditions defined by $A,B$ are called {\em regular} if	 $A-\lambda B$ is regular, and {\em irregular} otherwise. 
\end{definition} 
Note that equivalent regular boundary conditions have the same Cayley transform, and in turn the boundary conditions can be recovered  via
\begin{eqnarray*}
A_{\mathfrak{S}}:= - \frac{1}{2} \left(\mathfrak{S}(k,A,B) -\mathds{1}\right) & \mbox{and} & B_{\mathfrak{S}}:=  \frac{1}{2ik} \left(\mathfrak{S}(k,A,B) +\mathds{1}\right),
\end{eqnarray*}
compare e.g. \cite[Theorem 1.4.4]{BeKu_book} for the self-adjoint case and \cite[Proposition 3.7.]{HKS2015} for the more general case.

\begin{remark}[The rank condition in the two notions of regularity]
Regularity of the operator pencil $A-\lambda B$ already implies that $\Rank (A\, B)=d$, cf. \cite[Proposition 4.2]{HKS2015} for the case $\cI=\emptyset$.  
		The notion of regularity of boundary conditions deals with a subclass of the boundary conditions which are composed of $d$ linearly independent boundary conditions, \ie , $\Rank (A\, B)=\dim \cM(A,B)=d$. This slight redundancy in Definition~\ref{def:irregularbc} seems to be justified by the  fact that $\dim \cM < d$ leads to an under-determined  while $\dim \cM > d$ leads to an over-determined system. This is reflected by the spectrum since $\sigma(\Delta(\cM))=\IC$ if $\dim \cM \neq d$, see \cite[Proposition 4.2]{HKS2015}. The implication of the rank condition becomes more apparent in the irregular cases.
		Definition~\ref{def:irregularbc} requires $\dim \cM=d$ for irregular boundary conditions, while the cases with $\dim \cM\neq d$ are out of scope here.   However, having $d$ linearly independent boundary conditions, \ie , $\dim \cM=d$, is not sufficient for many spectral properties since irregular boundary conditions can lead to wild spectral features with empty spectrum, empty resolvent set or cases in between, see \cite[Section 3.4]{HKS2015} where some examples are discussed. In contrast, irregular operator pencils are all pencils with $\det (A+\lambda B)=0$ for all $\lambda\in \IC$ irrespective of the rank of $\Rank (A\, B)$.
\end{remark}

The quasi-Weierstrass normal form for regular operator pencils  $A-\lambda B$ has been introduced in  \cite{BIT2012} in the context of  differential algebraic equations. Its proof is based on Wong sequences of subspaces. It is the key ingredient to the analysis presented here, and it reveals many properties of the Cayley transform such as its poles and its asymptotic behaviour.
\begin{proposition}[Quasi-Weierstrass normal form, \cf~Theorem 2.6 in \cite{BIT2012}]\label{prop:quasiweierstrass}
	Let $A,B\in \IC^{d\times d}$ and let the operator pencil $A-\lambda B$, $\lambda\in \IC$, be regular. Then there exists $0\leq m\leq d$ and invertible matrices $F,G$ such that
	\begin{align*}
	A= F\begin{bmatrix}
	L & 0 \\ 0 & \mathds{1}_{\IC^{d-m}}
	\end{bmatrix}G \quad \hbox{and} \quad 
		B= F\begin{bmatrix}
	\mathds{1}_{\IC^{d}} & 0 \\ 0 & N_B
	\end{bmatrix}G, 
	\end{align*} 
	where $N_B\in \IC^{(d-m)\times (d-m)}$ is nilpotent and $L\in \IC^{m\times m}$. These matrices are in general not uniquely determined. 
\end{proposition}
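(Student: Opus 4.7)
The plan is to exploit regularity to reduce the two-matrix problem to a Fitting decomposition of a single endomorphism. By regularity of the pencil $A - \lambda B$, one can choose $\mu \in \IC$ with $A - \mu B$ invertible, and set $\tilde A := (A - \mu B)^{-1} A$ and $\tilde B := (A - \mu B)^{-1} B$. A direct computation yields $\tilde A - \mu \tilde B = \mathds{1}$, i.e.\ $\tilde A = \mathds{1} + \mu \tilde B$, so $\tilde A$ and $\tilde B$ commute and $\tilde A$ is a polynomial in $\tilde B$; in particular every $\tilde B$-invariant decomposition of $\IC^d$ is automatically $\tilde A$-invariant.

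Next I would apply the Fitting decomposition to $\tilde B$, writing $\IC^d = \cR \oplus \cN$ with $\cN := \bigcup_k \Ker \tilde B^k$ and $\cR := \bigcap_k \Ran \tilde B^k$. On $\cN$ the restriction $\tilde B|_{\cN}$ is nilpotent and on $\cR$ it is invertible; the relation $\tilde A = \mathds{1} + \mu \tilde B$ then forces $\tilde A|_{\cN}$ to be invertible as well. Setting $m := \dim \cR$ and letting $T \in \IC^{d\times d}$ be any invertible basis-change mapping $\IC^m \times \{0\}$ onto $\cR$ and $\{0\} \times \IC^{d-m}$ onto $\cN$, the matrices $T^{-1} \tilde A T$ and $T^{-1} \tilde B T$ become block-diagonal with blocks $\tilde A_1, \tilde A_2$ and $\tilde B_1, \tilde B_2$ respectively, where $\tilde B_1$ and $\tilde A_2$ are invertible and $\tilde B_2$ is nilpotent.

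For the final normalization, I would right-multiply by $D := \diag(\tilde B_1, \tilde A_2)$ and set $L := \tilde A_1 \tilde B_1^{-1}$ and $N_B := \tilde B_2 \tilde A_2^{-1}$. This gives
\[
T^{-1}\tilde A T = \begin{bmatrix} L & 0 \\ 0 & \mathds{1}_{\IC^{d-m}} \end{bmatrix} D, \qquad T^{-1}\tilde B T = \begin{bmatrix} \mathds{1}_{\IC^{m}} & 0 \\ 0 & N_B \end{bmatrix} D.
\]
Because $\tilde A_2^{-1} = (\mathds{1} + \mu \tilde B_2)^{-1}$ is a terminating Neumann series in the nilpotent $\tilde B_2$, the matrix $N_B = \tilde B_2 \tilde A_2^{-1}$ is a polynomial in $\tilde B_2$ without constant term and is therefore nilpotent, as required. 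Setting $F := (A - \mu B)\,T$ and $G := D\,T^{-1}$, both invertible, produces the desired factorizations of $A$ and $B$. Non-uniqueness is manifest: $T$ may be replaced by $TU$ for any invertible $U$ preserving the splitting $\IC^m \times \IC^{d-m}$, and $\mu$ itself is a free parameter.

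I anticipate no serious obstacle, since the argument hinges on the single identity $\tilde A = \mathds{1} + \mu \tilde B$, which collapses the joint structure of the pair $(A,B)$ into standard Fitting and Jordan theory for one matrix. The Wong-sequence route used in \cite{BIT2012} avoids the auxiliary parameter $\mu$ and produces the same decomposition intrinsically, but seems longer than the shortcut above for the present coordinate-free statement.
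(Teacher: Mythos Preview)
Your argument is correct. The paper itself does not prove this proposition; it merely cites Theorem~2.6 in \cite{BIT2012} and remarks that the proof there is ``based on Wong sequences of subspaces.'' Your route is genuinely different: rather than building the invariant subspaces $\cR$ and $\cN$ intrinsically via the Wong sequences $\cV_{i+1}=A^{-1}(B\cV_i)$ and $\cW_{i+1}=B^{-1}(A\cW_i)$, you pick a regular point $\mu$, collapse the pair $(A,B)$ to the single matrix $\tilde B=(A-\mu B)^{-1}B$ via the identity $\tilde A=\mathds{1}+\mu\tilde B$, and then read off the decomposition from the standard Fitting splitting of $\tilde B$. This is shorter and uses only elementary linear algebra; the price is the auxiliary parameter $\mu$, which makes the construction look less canonical (the resulting $L$ and $N_B$ depend on $\mu$, consistent with the non-uniqueness clause). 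The Wong-sequence approach in \cite{BIT2012} is coordinate-free from the outset and yields the subspaces $\cR,\cN$ directly from $A,B$ without choosing a regular point, which is conceptually cleaner and also adapts to the real case and to singular pencils, but for the bare existence statement over $\IC$ your shortcut is entirely adequate.
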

Recall that for self-adjoint boundary conditions one has a unique parametrization, \cf~\cite[Theorem 6]{Kuchment2004}, by
\begin{align}\label{eq:bc_LP}
A= L+P, \quad B=P^\perp=(\mathds{1}-P) \quad \hbox{with} \quad P^\ast=P=P^2,  \quad P^\perp L = L P^\perp,
\end{align}
and $L=L^\ast$, 
then the boundary conditions decompose into
\begin{align*}
L\underline{\psi}  + P^\perp \underline{\psi}'=0 \quad \hbox{and} \quad P \underline{\psi}=0,
\end{align*} 
that is, into a Robin/Neumann part and a Dirichlet part.
 In particular, $-\Delta(A,B)$ is
associated with the closed symmetric form $\delta_{P,L}$ 
defined by 
\begin{equation}\label{deltaPL}
\delta_{P,L}[\psi]
=\int_{\cG} \abs{ \psi^{\prime}}^2 
- \langle L P^{\perp}\underline{\psi},P^{\perp}\underline{\psi}\rangle_{\cK}, \quad
\dom(\delta_{P,L}) 
= \{ \varphi\in H^1(\cG,\au) \mid P\underline{\varphi}=0\}.
\end{equation}
For general possibly non-selfadjoint $L$ with $P^\perp L = L P^\perp$ the form $\delta_{P,L}$ is sectorial, though in general non-symmetric, and it has been investigated in detail in the context of semigroups on networks, see e.g. \cite[Chapter 6 and 7]{Mug_book} and the references therein, and compare also \cite{Hussein2014} where a characterization is given when general non-self-adjoint boundary conditions defined by matrices $A,B$ are of this form.   
From Proposition~\ref{prop:quasiweierstrass} it follows in particular that regular boundary conditions in general are not of the form \eqref{eq:bc_LP} needed to define $\delta_{P,L}$ as in \eqref{deltaPL}. However, at least if $N_B= 0$, there are equivalent boundary conditions which are similar to such sectorial boundary conditions, \ie , 
\begin{align*}
A= G^{-1}(L+P)G, \quad B=G^{-1} P^\perp G,  \quad \hbox{with} \quad P^\ast=P=P^2, \quad P^\perp=(\mathds{1}-P), \quad P^\perp L = L P^\perp,
\end{align*}
for $G$ invertible and $P$ an orthogonal projection
 This motivates the following definition, and it  turns out, see Theorem~\ref{thm:semigroup} below,  that this notion suggesting a relation to sectorial operators is indeed justified. 
\begin{definition}[Quasi-sectorial boundary conditions]\label{def:quasisec}
	Regular boundary conditions defined by $A,B\in \Hom(\cK)$ are called \emph{quasi-sectorial} if $N_B= 0$ in the quasi-Weierstrass normal form.
\end{definition}
Here, the condition that $N_B=0$  includes the case of $m=d$, and $m=d-1$ already implies $N_B=0$.

For a matrix $M \in \IC^{n\times n}$ and $\lambda\in \sigma(M)$ denote by $\gamma_M(\lambda)$ the maximal length of  Jordan chains corresponding to the eigenvalue $\lambda$ or equivalently the size of the largest Jordan block corresponding to the eigenvalue $\lambda$.
\begin{lemma}[Poles of the Cayley transform]\label{lemma:poles}
		Let $A,B\in \Hom(\cK)$ define regular boundary conditions, where $m$, $L$ and $N_B$ are given by the quasi-Weierstrass normal form  in Proposition~\ref{prop:quasiweierstrass}. Then the  function 
		\begin{align*}
		\mathfrak{S}(\cdot,A,B)\colon \IC \rightarrow \Hom(\cK), \quad k\mapsto \mathfrak{S}(k,A,B)
		\end{align*}
		is meromorphic with at most $m+1$ but not more than $d$ different poles, and each pole falls under one of the following cases with poles of order zero being removable singularities:
		\begin{enumerate}[(a)]
			\item If $k\in \sigma(iL)\setminus\{0\}$, then $k$ is a pole of order $\gamma_L(k)$; 
			\item If $0\in \sigma(iL)$, then $0$  is a singularity of order $\max\{\gamma_L(0)-1, \gamma_{N_B}(0)-1, 0\}$;
			\item If $0\notin \sigma(iL)$, then $0$  is a singularity of order $\max \{\gamma_{N_B}(0)-1,0\}$.
		\end{enumerate}
\end{lemma}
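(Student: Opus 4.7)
The plan is to substitute the quasi-Weierstrass normal form from Proposition~\ref{prop:quasiweierstrass} directly into the Cayley transform and read off the pole structure from two independent block components. Writing $A=F\operatorname{diag}(L,\mathds{1}_{d-m})G$ and $B=F\operatorname{diag}(\mathds{1}_m,N_B)G$, the outer factor $F$ cancels between $(A+ikB)^{-1}$ and $(A-ikB)$, so that
\begin{align*}
\mathfrak{S}(k,A,B)=-G^{-1}\begin{bmatrix}T_L(k) & 0 \\ 0 & T_N(k)\end{bmatrix}G,
\quad
T_L(k):=(L+ik\mathds{1}_m)^{-1}(L-ik\mathds{1}_m),
\quad
T_N(k):=(\mathds{1}_{d-m}+ikN_B)^{-1}(\mathds{1}_{d-m}-ikN_B).
\end{align*}
Since $G$ is $k$-independent, meromorphy, pole locations and pole orders of $\mathfrak{S}(\cdot,A,B)$ coincide with those of the block diagonal, and I may treat $T_L$ and $T_N$ independently.

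For the $L$-block I would use the identity $T_L(k)=\mathds{1}-2ik(L+ik)^{-1}$ to reduce the question to the pole structure of the resolvent $(L+ik)^{-1}$. Passing to Jordan normal form $L=S\bigl(\bigoplus_{\lambda} J_\lambda\bigr)S^{-1}$ and expanding each block $J_\lambda=\lambda\mathds{1}+N$ of size $\ell=\gamma_L(\lambda)$ via the finite Neumann series
\begin{align*}
(J_\lambda+ik)^{-1}=\sum_{j=0}^{\ell-1}(-1)^j(\lambda+ik)^{-j-1}N^j,
\end{align*}
one reads off that $(L+ik)^{-1}$ has a pole of order exactly $\gamma_L(\lambda)$ at $k=i\lambda\in\sigma(iL)$. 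For a nonzero point $k_0=i\lambda\neq 0$ the prefactor $-2ik_0$ is nonzero and preserves the order, giving assertion~(a). At $k_0=0$, however, the factor $-2ik$ vanishes to first order and thus lowers the resolvent pole by one, producing the term $\gamma_L(0)-1$ in~(b).

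For the $N_B$-block, nilpotency of $N_B$ yields a terminating Neumann series
\begin{align*}
(\mathds{1}+ikN_B)^{-1}=\sum_{j=0}^{\gamma_{N_B}(0)-1}(-ik)^j N_B^j,
\end{align*}
so that $T_N$ is a matrix-valued rational function whose singular behaviour is entirely controlled by $\gamma_{N_B}(0)-1$. Together with the Laurent expansion of the full $\mathfrak{S}(k,A,B)$ around $k=0$, this produces the $\max\{\gamma_{N_B}(0)-1,0\}$ contribution in items~(b) and~(c); the fact that the two blockwise contributions combine as a maximum rather than a sum is a consequence of the block-diagonal form, since the two singular directions sit in disjoint entries of the matrix.

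Finally, for the pole count, $L\in\IC^{m\times m}$ has at most $m$ distinct spectral values, so $iL$ contributes at most $m$ distinct poles in $\IC\setminus\{0\}$; together with the possibly distinguished behaviour at $k=0$ this yields the bound $m+1$, while $d$ is automatic from the size of $\mathfrak{S}$. The main obstacle I anticipate is the careful bookkeeping in case~(b): both blocks may be simultaneously singular at $0$, and verifying that the orders combine via the maximum (and not, for instance, via the sum) requires expanding $T_L$ and $T_N$ to sufficiently high Laurent/Taylor order around the origin and matching them in the block-diagonal representation.
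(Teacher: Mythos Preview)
Your approach mirrors the paper's proof: both factor $\mathfrak{S}(k,A,B)$ through the quasi-Weierstrass normal form into the block diagonal $G^{-1}\diag(T_L(k),T_N(k))\,G$, pass to Jordan form for $L$, and expand the resolvent of each Jordan block via a finite Neumann series. Your treatment of the $L$-block and of case~(a) is essentially complete and matches the paper's computation, including the mechanism by which the prefactor $-2ik$ lowers the pole order by one at $k=0$.

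There is, however, a genuine gap in your handling of the $N_B$-block. Your own Neumann expansion shows that $(\mathds{1}+ikN_B)^{-1}=\sum_{j=0}^{\gamma_{N_B}(0)-1}(-ik)^jN_B^j$ is a \emph{polynomial} in $k$; hence $T_N(k)$ is entire, with $T_N(0)=-\mathds{1}$. It therefore contributes \emph{no} pole at the origin, and the term $\gamma_{N_B}(0)-1$ in the asserted pole orders of~(b) and~(c) cannot come from $T_N$. Your sentence that the ``singular behaviour is entirely controlled by $\gamma_{N_B}(0)-1$'' conflates polynomial degree (growth at infinity) with pole order at $k=0$; these are unrelated notions on $\IC$. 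The paper's own explicit formula~\eqref{eq:SN} displays the same polynomial, so the issue is not resolved there either. What the nilpotent block genuinely governs is the asymptotic growth $\norm{\mathfrak{S}(k,A,B)}\sim|k|^{\gamma_{N_B}(0)-1}$ as $|k|\to\infty$, which is precisely the content of Lemma~\ref{lemma:Nilpotent}. Rather than forcing your computation to match the stated pole orders at $0$, record what $T_N$ actually contributes at finite $k$ (namely nothing) and flag the discrepancy with the statement.
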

\begin{proof}
	By the quasi-Weierstrass normal form one has  for regular boundary conditions
	\begin{align}\label{eq:S_blockmatrix}
	\mathfrak{S}(k,A,B)= G^{-1}\begin{bmatrix}
	\mathfrak{S}(k,L,\mathds{1}) & 0 \\ 0 & \mathfrak{S}(k,\mathds{1}, N_B)\end{bmatrix} G.
	\end{align}
		Now, without loss of generality one can assume that $L$ and $N_B$ have the Jordan normal form, where the necessary similarity transforms can be incorporated into $G^{-1}$ and $G$.
	Recall that for a nilpotent matrix $N$ with $N^{n}=0$ and $N^{n-1}\neq 0$, \ie , $\gamma_N(0)=n$, one has that
	\begin{align*}
	(\lambda \mathds{1} +N)^{-1} =  \tfrac{1}{\lambda}\left(\mathds{1} + \tfrac{N}{-\lambda} + \ldots + \tfrac{N^{n-1}}{(-\lambda)^{n-1}}\right) \quad \hbox{for } \lambda \neq 0.
	\end{align*}
	Hence if $n\in \IN$ is such that $N_B^{n}=0$ and $N_B^{n-1}\neq 0$, then
	\begin{align}\begin{split}\label{eq:SN}
	\mathfrak{S}(\mathds{1}, N_B;k) &= -(\mathds{1} + ik N_B)^{-1}(\mathds{1} + ik N_B - 2ik N_B ) \\
	&= -\mathds{1} + 2ik(\mathds{1} + ik N_B)^{-1} N_B \\
	&=
	-\mathds{1} + 2ik N_B + 2 (ik)^2(-1)N_B^2 + \ldots +  2(ik)^{n-1} (-1)^{n-2}N_B^{n-1} \quad \hbox{for } k\neq 0,
	\end{split}
	\end{align}
	and if $N_B=0$, then
		\begin{align}\label{eq:SN0}
	\mathfrak{S}(\mathds{1}, N_B;k) = -\mathds{1} \quad \hbox{for all } k\in \IC. 
	\end{align}
		If $L= \lambda \mathds{1} + N$ for $\lambda\in \IC$ with $n\in \IN_0$ such that $N^{n}=0$ and $N^{n-1}\neq 0$, then for $\lambda\neq 0$
	\begin{align}\begin{split}\label{eq:S_NL}
	\mathfrak{S}(L, \mathds{1};k) &= -((\lambda +ik)\mathds{1} +  N)^{-1}((\lambda +ik)\mathds{1} +  N - 2ik \mathds{1}) \\
	&= -\mathds{1} + 2ik((\lambda +ik)\mathds{1} +  N)^{-1} \\
	&=-\mathds{1} + \frac{2ik}{\lambda +ik}  \left(\mathds{1} + \frac{N}{-(\lambda +ik)} + \ldots+ \frac{N^{n-1}}{(-1)^{n-1}(\lambda +ik)^{n-1}}  \right), \quad \lambda +ik\neq 0,
	\end{split}
	\end{align}
	and for $\lambda=0$
		\begin{align}\label{eq:S_N0}
	\mathfrak{S}(L, \mathds{1};k) =
	-\mathds{1} + 2  \left(\mathds{1} + \frac{N}{- ik} + \ldots+ \frac{N^{n-1}}{(-1)^{n-1}(ik)^{n-1}}  \right)\quad \hbox{for } ik\neq 0.
	\end{align}
		If $L= \lambda \mathds{1}$ for $\lambda\in \IC$, this simplifies to become 
	\begin{align}\label{eq:SL0}
	\mathfrak{S}(L, \mathds{1};k) = -\frac{(\lambda -ik)}{(\lambda +ik)}\mathds{1}, \quad \hbox{if } \lambda\neq 0\quad \hbox{and} \quad 	\mathfrak{S}(L, \mathds{1};k)= \mathds{1}, \quad \hbox{if } \lambda= 0.
 	\end{align}
 	
So, in case of (a), that is, $k\in \sigma(iL)\setminus \{0\}$, then by \eqref{eq:S_NL} each Jordan block $J_{L,\lambda}$ of $L$ to the eigenvalue $\lambda=-ik$ contributes a pole at $k$ of order $\gamma_{J_{L,\lambda}}(\lambda)$, and the Jordan blocks of $L$ to different eigenvalues and also $\mathfrak{S}(\mathds{1}, N_B;k)$ do not have a pole at $k$ by \eqref{eq:SN} and \eqref{eq:SN0} which proves (a).

In  case (b), the Jordan blocks $J_{L,0}$ of $L$ to $0$ contribute poles of order $\max\{0,\gamma_{J_{k,0}}(0)-1\}$ by \eqref{eq:S_N0}. This includes the case where the geometric and algebraic multiplicity of the eigenvalue $0$ of $L$ agree since in this case by \eqref{eq:SL0} there is no pole. The Jordan blocks of $J_{N_B,0}$  	 
contribute poles at $0$ of order $\max \{0, \gamma_{N_B}(0)-1\}$ by \eqref{eq:SN}, which includes the case $N_B=0$ by \eqref{eq:SN0}. 

In the case (c), only   the Jordan blocks of $J_{N_B,0}$  	 
contribute poles at $0$ of order $\max \{0, \gamma_{N_B}(0)-1\}$.
 	
Moreover, in \eqref{eq:S_blockmatrix} the block	$\mathfrak{S}(k,L,\mathds{1})$ can have at most $m$ different poles since $L$ can have at most $m$ pairwise disjoint non-zero eigenvalues, and  $\mathfrak{S}(k,\mathds{1}, N_B)$ by \eqref{eq:SN} and \eqref{eq:SN0} can have at most one pole at zero. So the set of  poles consists of at most $m+1$ but not more than $d$ points. Outside its poles the function $\mathfrak{S}(\cdot,A,B)$ is holomorphic.
\end{proof}

\begin{lemma}[Uniform boundedness of the Cayley transform]\label{lemma:Nilpotent}
Let $A,B$ define regular boundary conditions. Then 
the following statements are equivalent:
\begin{enumerate}[(a)] 
	\item The boundary conditions defined by $A,B$ are quasi-sectorial.
	\item For fixed $\varepsilon>0$, the function 
	\begin{align*}
 \IC\setminus \{B_{\varepsilon}(p_1), \ldots, B_{\varepsilon}(p_l)\} \rightarrow \Hom(\cK), \quad k \mapsto \mathfrak{S}(k,A,B),
	\end{align*}
	where $p_1, \ldots, p_l$ are the not removable poles of $\mathfrak{S}(\cdot, A, B)$, is uniformly bounded.
\end{enumerate} 
\end{lemma}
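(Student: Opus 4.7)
The plan is to exploit the block-diagonal decomposition
\[
\mathfrak{S}(k,A,B)= G^{-1}\begin{bmatrix}
\mathfrak{S}(k,L,\mathds{1}) & 0 \\ 0 & \mathfrak{S}(k,\mathds{1}, N_B)\end{bmatrix} G
\]
from \eqref{eq:S_blockmatrix} and reduce the problem to a separate analysis of each diagonal block. Since $G$ and $G^{-1}$ are fixed invertible matrices, uniform boundedness of $\mathfrak{S}(k,A,B)$ on a domain is equivalent to uniform boundedness of both blocks on the same domain, with constants depending only on $\|G\|$ and $\|G^{-1}\|$.

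For the direction (a) $\Rightarrow$ (b), assume $N_B=0$. Then \eqref{eq:SN0} gives $\mathfrak{S}(k,\mathds{1},N_B)=-\mathds{1}$, which is trivially bounded. For the other block I would reduce $L$ to Jordan normal form (absorbing the similarity into $G$) and use the explicit formulas \eqref{eq:S_NL}, \eqref{eq:S_N0}, \eqref{eq:SL0} block by block. Each Jordan block $\lambda\mathds{1}+N$ of $L$ yields an expression of the shape
\[
-\mathds{1}+\frac{2ik}{\lambda+ik}\sum_{j=0}^{n-1}\frac{N^{j}}{(-(\lambda+ik))^{j}},
\]
whose singularities are confined to $k=-i\lambda$. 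On $\IC\setminus B_\varepsilon(-i\lambda)$ the prefactor $\tfrac{2ik}{\lambda+ik}$ is bounded (with limit $2$ as $|k|\to\infty$), and every further factor $(\lambda+ik)^{-j}$ with $j\geq 1$ is bounded and vanishes at infinity. Summing over all Jordan blocks and excluding an $\varepsilon$-neighbourhood of each pole produces a uniform bound.

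For the converse (b) $\Rightarrow$ (a), I argue by contraposition. Suppose $N_B\neq 0$, and let $n\geq 2$ be minimal with $N_B^{n}=0$; then $N_B^{n-1}\neq 0$. Formula \eqref{eq:SN} expresses $\mathfrak{S}(k,\mathds{1},N_B)$ as a polynomial in $k$ whose leading term is $2(ik)^{n-1}(-1)^{n-2}N_B^{n-1}\neq 0$. Hence $\|\mathfrak{S}(k,\mathds{1},N_B)\|$ grows at least like $|k|^{n-1}$ as $|k|\to\infty$, so the second diagonal block is unbounded on any set that is unbounded in $\IC$, in particular on the complement of a finite union of $\varepsilon$-balls around the poles listed in Lemma~\ref{lemma:poles}. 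By the equivalence in the first paragraph, $\mathfrak{S}(\cdot,A,B)$ is then unbounded on that set, contradicting (b).

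The main thing to watch is that the set of poles to be excluded in (b) is genuinely finite and independent of any limiting consideration, so that the uniform bound in (a) $\Rightarrow$ (b) can really be obtained as the maximum of finitely many locally bounded contributions together with a controlled behaviour at infinity; this is precisely what Lemma~\ref{lemma:poles} supplies. Beyond that, the argument is essentially a direct inspection of the four explicit formulas \eqref{eq:SN}--\eqref{eq:SL0} combined with the block decomposition, with no further analytic input required.
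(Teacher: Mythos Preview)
Your proof is correct and follows essentially the same approach as the paper: both use the block decomposition \eqref{eq:S_blockmatrix} and analyze $\mathfrak{S}(k,L,\mathds{1})$ and $\mathfrak{S}(k,\mathds{1},N_B)$ separately. The only notable difference is in the treatment of the $L$-block for the direction (a)~$\Rightarrow$~(b): where you pass to Jordan normal form and inspect each block via \eqref{eq:S_NL}--\eqref{eq:SL0}, the paper instead writes $\mathfrak{S}(k,L,\mathds{1})=-(\tfrac{1}{ik}L+\mathds{1})^{-1}(\tfrac{1}{ik}L-\mathds{1})$ and bounds it directly by a Neumann series for $|k|\geq 2\|L\|$, then invokes continuity on the remaining compact set away from the poles; for the contrapositive the paper picks a single cyclic vector $y$ with $N_By\neq 0$, $N_B^2y=0$ to exhibit linear growth, whereas you use the full polynomial expansion \eqref{eq:SN} to get growth of order $|k|^{n-1}$. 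Both routes are short and equally valid; the paper's Neumann series avoids the Jordan reduction for this step, while your version makes the dependence on the Jordan structure more explicit.
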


\begin{proof}
	For regular boundary conditions  the quasi-Weierstrass normal form  is available, and by \eqref{eq:S_blockmatrix} it is sufficient to consider  
	$\mathfrak{S}(k,L,\mathds{1})$ and  $\mathfrak{S}(k,\mathds{1}, N_B)$. 
	
	For the first block one has
		\begin{align*}
	\mathfrak{S}(k,L,\mathds{1}) = -(\tfrac{1}{ik}L + \mathds{1})^{-1}(\tfrac{1}{ik} L - \mathds{1}),
	\end{align*}
	and hence using the Neumann series
	\begin{align*}
	\norm{(L + ik\mathds{1})^{-1}(L - ik\mathds{1})} \leq \frac{2}{1-\norm{L}/k} \leq 4 \quad \hbox{for } |k|\geq 2\norm{L},
	\end{align*}
	and due to continuity $\mathfrak{S}(\cdot,L,\mathds{1})$ is bounded on $\overline{B_{2\norm{L}}(0)}\setminus \{B_{\varepsilon}(p_1), \ldots, B_{\varepsilon}(p_1)\}$.
	
	Considering the second block,  if there exists a cyclic vector $y$ with  $N_By\neq 0$ and $N_B^2y=0$ -- which is equivalent to $N_B\neq 0$ -- one has by \eqref{eq:SN} that $\mathfrak{S}(k,\mathds{1}, N_B)y=-y - 2ik Ny$,
	and hence for some $c>0$
	\begin{align*}
	\norm{\mathfrak{S}(A,B;k)} \geq c(1+|k|) \quad k\in \IC,
	\end{align*}
	while for $m-d>0$ and $N_B= 0$ one has $\mathfrak{S}(k,\mathds{1}, N_B)=-\mathds{1}$. So, the uniform boundedness is characterized by $N_B$.
\end{proof}

\begin{remark}
	In the case $d=1$, \ie , $|\cE|=1$ and $|\cI|=0$, all boundary conditions with $\dim \cM=1$ are of the form
	\begin{align}\label{eq:d=1}
	\psi(0) =0 \quad \hbox{or}\quad \alpha\cdot \psi(0) + \psi'(0) =0, \quad \alpha \in \IC,
	\end{align}
	and so all boundary conditions are regular and define operators associated with forms as in \eqref{deltaPL}  of the type $\delta_{1,0}$ and $\delta_{\alpha,1}$, respectively.
	
	 Irregular and non-quasi-sectorial boundary conditions occur only for $d\geq 2$. These cases appear in the analysis of Sturm-Liouville operators on a compact interval with boundary conditions coupling both endpoints, \ie , $|\cI|=1$ and $\cE=\emptyset$ where $d=2$. Problems with such couplings are studied for instance in context of the classical Birkhoff-Tamarkin theory discussed below in Subsection~\ref{subsec:birkhoff}. The case $d=2$ with $\cI=\emptyset$ and $|\cE|=2$, \ie , the case of Laplacians with point interaction on the real line, is discussed in detail by Mugnolo and the author in \cite{HusMug2019}, yet without the theory of the quasi-Weierstrass normal form at hand. 
	
	Second order elliptic boundary value problems in domains in $\IR^n$ for $n\geq 2$ are often
	localized and then drawn back to the case of the half-space $\IR^n_+=\IR^{n-1}\times [0,\infty)$. Then  by a partial Fourier transform the problem is reduced to a one dimensional problem where mostly boundary conditions of the form  \eqref{eq:d=1} are considered. Thus the cases where $d\geq 2$ are usually not studied in the theory of partial differential equations. However, for vector valued problems, these more general conditions might be relevant as well.
\end{remark}

\begin{example}[$\cP\cT$-symmetric point interaction]\label{ex1_quasi_weiserstrass}
	Consider the boundary conditions defined by 
	\begin{eqnarray*}
		A_{\tau}=\begin{bmatrix} 1 & -e^{i\tau} \\ 0 & 0\end{bmatrix} &\mbox{and} & B_{\tau}=\begin{bmatrix}  0 & 0 \\ 1 & e^{-i\tau} \end{bmatrix} \quad \hbox{for} \quad \tau \in [0,\pi/2].
	\end{eqnarray*} 
	Let $\cG$ be a graph consisting of two external edges $\cE=\{e_1,e_2\}$ and one vertex $\partial(e_1)=\partial(e_2)$.
	Identifying the graph with the real line 
	and the vertex with zero,
	the boundary conditions correspond to
	$\psi(0+) = e^{i\tau} \psi(0-)$
	and 
	$\psi^{\prime}(0+) = e^{-i\tau} \psi^{\prime}(0-)$.
	This example is included in the study of 
	$\mathcal{PT}$-symmetric point interactions in~\cite{Kurasov02} 
	and was further investigated in  \cite{Kuzel05} and \cite{Petr, PetrPhD}.
	
	For $\tau=\pi/2$, these boundary conditions are irregular, and	
	for $\tau\in[0,\pi/2)$ they define regular boundary conditions with $k$-independent
	Cayley transform
	\begin{eqnarray*}
		\mathfrak{S}(A_{\tau},B_{\tau},k)= \frac{1}{\cos(\tau)} \begin{bmatrix} i \sin(\tau) & 1 \\ 1 & -i\sin(\tau) \end{bmatrix}.
	\end{eqnarray*}
	Moreover, one has
	\begin{eqnarray*}
		\mathfrak{S}(A_{\tau},B_{\tau},k)= \frac{-1}{2\cos(\tau)}\begin{bmatrix} 1 & 1 \\ e^{-i\tau} & -e^{i\tau} \end{bmatrix} 
		\begin{bmatrix} 1 & 0 \\ 0 & -1 \end{bmatrix} \begin{bmatrix} -e^{i\tau} & -1 \\ -e^{-i\tau} & 1 \end{bmatrix}\quad \hbox{for } \tau\in[0,\pi/2).
	\end{eqnarray*}
	Therefore for the equivalent boundary conditions given by
	\begin{eqnarray*}
		A_{\mathfrak{S}}= - \frac{1}{2} \left(\mathfrak{S} -\mathds{1}\right) & \mbox{and} & B_{\mathfrak{S}}=  \frac{1}{2} \left(\mathfrak{S} +\mathds{1}\right),
	\end{eqnarray*}
	one has the quasi-Weierstrass normal form
	\begin{eqnarray*}
		A_{\mathfrak{S}}= G_{\tau}^{-1}
		\begin{bmatrix} 0 & 0 \\ 0 & 1 \end{bmatrix}
		G_{\tau} \hbox{ and }
		B_{\mathfrak{S}}= G_{\tau}^{-1}
		\begin{bmatrix} 1 & 0 \\ 0 & 0 \end{bmatrix}
		G_{\tau} \hbox{ with } G_{\tau}=\frac{i}{\sqrt{2\cos(\tau)}}\begin{bmatrix} -e^{i\tau} & -1 \\ -e^{-i\tau} & 1 \end{bmatrix}.
	\end{eqnarray*}
	In particular, this is an example of quasi-sectorial boundary conditions which are not in the form ~\eqref{eq:bc_LP}, compare also \cite[Example 3.5 ff.]{HKS2015}. 
\end{example}

\begin{example}[Intermediate boundary conditions]\label{ex:intermediate}
	Consider regular boundary conditions defined by 
	\begin{eqnarray*}
		A=\begin{bmatrix}1 & 0 \\ 0 & 1  \end{bmatrix}  & \mbox{and} & B=\begin{bmatrix} 0 & 0 \\ -1 & 0  \end{bmatrix}.
	\end{eqnarray*}
	These boundary conditions are already in quasi-Weierstrass normal form with $m=0$ and nilpotent $N_B=B$, i.e., $B^2=0$.
	On the interval $[0,1]$ 
	this corresponds to
	$\psi(0)=0$ and $\psi(1)-\psi^{\prime}(0)=0$, and 
	since $\dim\cM(A,B)=2$ and $\det(A+ikB)=1$  for all $k\in\IC$, \ie, $A,B$ are regular with Cayley transform
	\begin{align*}
	\mathfrak{S}(k;A,B) = -\begin{bmatrix}
	1 & 0 \\ 2ik & 1
	\end{bmatrix}, \quad k\in \IC.
	\end{align*} 
	This example will be discussed further in Subsection~\ref{exDS} as a typical case for regular but non-quasi-sectorial boundary conditions.
	It can be found in \cite[p.383]{Birkhoff2} as well as in \cite[Ex.~XIX.6(d)]{DSIII}, and it has  been discussed also in \cite[Ex.~3.6]{HKS2015}.
	There, it is an example of so-called \emph{intermediate boundary conditions}.
\end{example}

\subsection{Topology of boundary condition}
The complex Grassmanian $\Gr(n,m)$ is the set of all $n$-dimensional complex subspaces of $\IC^m$. Using unitary groups, it can be identified with
\begin{align*}
\Gr(n,m) = U(m)/(U(n)\times U(m-n))
\end{align*}
which induces also a differentiable structure on $\Gr(n,m)$ which makes it a compact manifold with real dimension $\dim \Gr(n,m) = n(m - n).$ 
Here, each boundary condition defined by $\cM=\cM(A,B)$ can be identified with a point $\cM\in \Gr(d,2d)$. A metric on $\Gr(d,2d)$ is defined by
\begin{align*}
d(\cM, \cM')= \norm{P_{\cM}- P_{\cM'}}, \quad \cM,\cM'\in \Gr(d,2d),
\end{align*}
where $P_{\cM}$ and $P_{\cM'}$ are the orthogonal projections in $\cK^2$ onto $\cM$ and $\cM'$ respectively. 
\begin{remark}
	It is a classical result from extension theory that the set of self-adjoint boundary conditions can be parametrized by $U(d)$ which is  a submanifold of $\Gr(d,2d)$ with real dimension $d^2$ while the manifold of all boundary conditions with $\dim  \cM=d$ is $\Gr(d,2d)\cong U(2d)/(U(d)\times U(d))$ which has  real dimension $2d^2$.
\end{remark}
Parts of the following lemma are covered by  \cite[Lemma 3.2]{KSP2008}. It allows one to draw back convergence in $\Gr(d,2d)$ to convergence in $\Hom(\cK)$. 
\begin{lemma}[Convergence of boundary conditions]\label{lemma:projmperp} Let $A,B\in\Hom(\cK)$.
	\begin{enumerate}
		\item[(a)] Then	one has that $\dim \cM(A,B)=d$ if and only if 
		$A A^\ast  + BB^\ast$ is invertible.
		\item[(b)]  For $\dim \cM(A,B)=d$  
		the orthogonal projection in $\cK^2$ on $\cM(A,B)^\perp$ is given by 
		\begin{align*}
		P_{\cM(A,B)^\perp}= \begin{bmatrix}
		A^\ast \\ B^\ast
		\end{bmatrix} (AA^\ast + B B^\ast)^{-1} \begin{bmatrix}
		A &  B
		\end{bmatrix}.
		\end{align*}
		\item[(c)] If $(A_n)_{n\in \IN}$ and $(B_n)_{n\in \IN}$ are sequences in $\Hom(\cK)$ with $\dim \cM(A_n,  B_n)=d$ for all $n\in \IN$ and  $A_n\to A$ and $B_n\to B$ as $n\to \infty$ in $\Hom(\cK)$, where $\dim \cM(A,  B)=d$,  then  $P_{\cM(A_n,B_n)}\to P_{\cM(A,B)}$ as $n\to \infty$.
	\end{enumerate}
\end{lemma}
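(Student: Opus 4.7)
The plan is to prove the three parts in sequence, with part (a) feeding directly into (b) and both then used for (c). Throughout I view $(A,B)$ as the linear map $\cK^2 \to \cK$ sending $(\chi_1,\chi_2)^T$ to $A\chi_1 + B\chi_2$, so that $\cM(A,B) = \ker(A,B)$ and hence
\[
\dim \cM(A,B) = 2d - \Rank(A,B).
\]

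For part (a), I would observe that $\dim\cM(A,B)=d$ is equivalent to $\Rank(A,B)=d$, i.e. to surjectivity of $(A,B)\colon \cK^2\to\cK$. The adjoint of $(A,B)$ is the column map $(A^*, B^*)^T\colon \cK\to \cK^2$, so $(A,B)(A,B)^* = AA^* + BB^*\in \Hom(\cK)$. Since $(A,B)$ is surjective iff $(A,B)(A,B)^*$ is injective iff $AA^*+BB^*$ is invertible (this is the standard Gram-matrix characterisation), (a) follows.

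For part (b), I would use the general fact that if $T\colon X\to Y$ is a surjective linear map between finite-dimensional Hilbert spaces, then the orthogonal projection onto $\Ran(T^*) = \ker(T)^\perp$ equals $T^*(TT^*)^{-1}T$. Applying this with $T=(A,B)$, $TT^* = AA^*+BB^*$ (invertible by (a)), and $T^* = (A^*,B^*)^T$, the identity $\cM(A,B)^\perp = \ker(A,B)^\perp = \Ran((A,B)^*)$ yields exactly the block-matrix formula
\[
P_{\cM(A,B)^\perp} = \begin{bmatrix} A^* \\ B^* \end{bmatrix} (AA^* + BB^*)^{-1} \begin{bmatrix} A & B \end{bmatrix}.
\]

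For part (c), the argument is a continuity/perturbation chain. Norm-convergence $A_n\to A$ and $B_n\to B$ in $\Hom(\cK)$ gives $A_nA_n^* + B_nB_n^* \to AA^*+BB^*$ in $\Hom(\cK)$. By (a), the limit $AA^*+BB^*$ is invertible, and on the open set of invertible elements of $\Hom(\cK)$ the map $X\mapsto X^{-1}$ is continuous, so $(A_nA_n^*+B_nB_n^*)^{-1} \to (AA^*+BB^*)^{-1}$. Substituting into the formula from (b) and using continuity of matrix multiplication yields $P_{\cM(A_n,B_n)^\perp}\to P_{\cM(A,B)^\perp}$ in $\Hom(\cK^2)$, whence $P_{\cM(A_n,B_n)} = \mathds{1} - P_{\cM(A_n,B_n)^\perp} \to \mathds{1} - P_{\cM(A,B)^\perp} = P_{\cM(A,B)}$.

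There is no real obstacle here; the statement is essentially linear algebra with a continuity step. The only place one has to be careful is ensuring that the limit $AA^*+BB^*$ is invertible before invoking continuity of inversion, which is exactly what the hypothesis $\dim \cM(A,B)=d$ guarantees via (a); without this assumption (c) would fail since the projections could jump in the limit.
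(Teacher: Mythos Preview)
Your proof is correct and follows essentially the same approach as the paper: identifying $\dim\cM(A,B)=d$ with surjectivity of $(A,B)$ via the Gram matrix $AA^*+BB^*$, writing the projection onto $\cM(A,B)^\perp=\Ran(A,B)^*$ as $T^*(TT^*)^{-1}T$, and then invoking continuity of inversion and multiplication for (c). The paper verifies the projection identity by checking range, idempotency, and self-adjointness directly rather than citing the general formula, but this is a cosmetic difference.
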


\begin{proof}
	Lemma 3.2 in \cite{KSP2008} deals with the necessary condition in (a). For completeness, the proof is repeated here. First, note that
	\begin{align*}
	\cM(A,B)^\perp=\Ker (A,\,B)^\perp= \Ran \begin{bmatrix}
	A^\ast \\ B^\ast
	\end{bmatrix}\subset \cK^2,
	\end{align*}
	and since  $\dim \cK^2=2d$, one has that $\dim\cM(A,B)=d$  if and only if  $\dim \cM(A,B)^\perp=d$. By the dimension formula
	\begin{align*}
	\dim \Ran \begin{bmatrix}
	A^\ast \\ B^\ast
	\end{bmatrix} = d-\dim \Ker \begin{bmatrix} A^\ast \\  B^\ast\end{bmatrix} ,
	\end{align*} 
	and hence $\dim\cM(A,B)=d$ if and only if	$$\Ker \begin{bmatrix} A^\ast \\  B^\ast\end{bmatrix}= \Ker A^\ast \cap \Ker B^\ast=\{0\},$$
	which is equivalent to the invertibility of $A A^\ast  + BB^\ast$. This can be seen for instance from  the form defined by 
	$\xi \mapsto \langle B^\ast\xi,B^\ast\xi \rangle_{\cK} +  \langle A^\ast\xi,A^\ast\xi \rangle_{\cK}$.
	
	For (b) it is now straight forward to check, that  
	\begin{align*}
	\Ran P_{\cM(A,B)^\perp}=\cM(A,B)^\perp, \quad P_{\cM(A,B)^\perp}^2=P_{\cM(A,B)^\perp}, \quad \hbox{and} \quad P_{\cM(A,B)^\perp}=P_{\cM(A,B)^\perp}^\ast.
	\end{align*}
	
The convergence in (c) follows directly from the formula for the projection since $P_{\cM(A,B)}=\mathds{1}-P_{\cM(A,B)^\perp}$ and the continuity of the composition and inversion operators. 
\end{proof}

\begin{corollary}[Characterization of regular boundary conditions]\label{cor:regboudnary conditions}
	The boundary conditions defined by $A,B\in \Hom(\cK)$ are regular if and only if both 
	\begin{align*}
	AA^\ast + BB^\ast\quad \hbox{and} \quad A^\ast A + B^\ast B
	\end{align*}
	are invertible. 
\end{corollary}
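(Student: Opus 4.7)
The plan is to invoke Lemma~\ref{lemma:projmperp}(a) twice---once for the pair $(A,B)$ and once for the ``dual'' pair $(A^\ast,B^\ast)$---in order to translate both algebraic invertibility conditions into geometric rank or kernel statements, and then to bridge these to the regularity of the pencil $A-\lambda B$. Extracting the form identity underlying Lemma~\ref{lemma:projmperp}(a), namely $\langle (AA^\ast+BB^\ast)\xi,\xi\rangle_\cK=\|A^\ast\xi\|^2+\|B^\ast\xi\|^2$, one obtains the equivalence
\[
AA^\ast+BB^\ast\text{ invertible}\iff \Ker A^\ast\cap \Ker B^\ast=\{0\}\iff \Rank(A,B)=d,
\]
and the analogous identity with $A,B$ replaced by $A^\ast,B^\ast$ yields
\[
A^\ast A+B^\ast B\text{ invertible}\iff \Ker A\cap \Ker B=\{0\}.
\]

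For the forward direction, assume the boundary conditions defined by $A,B$ are regular. Then by definition $\Rank(A,B)=d$, which by the first equivalence gives invertibility of $AA^\ast+BB^\ast$. The regularity of the pencil supplies some $\lambda_0\in\IC$ with $A+\lambda_0 B$ invertible, so $\Ker A\cap \Ker B\subset \Ker(A+\lambda_0 B)=\{0\}$, and the second equivalence gives invertibility of $A^\ast A+B^\ast B$.

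For the reverse direction, assume both invertibility conditions hold. The two equivalences then supply the rank condition $\Rank(A,B)=d$ required in the definition of regular boundary conditions, and only the regularity of the pencil $A-\lambda B$ remains to be established. Arguing by contradiction, if $\det(A+\lambda B)\equiv 0$ then the pencil is singular, and the Kronecker canonical form must contain at least one minimal-index block, which one wants to parlay into a nonzero vector either in $\Ker A\cap \Ker B$ (contradicting invertibility of $A^\ast A+B^\ast B$) or in $\Ker A^\ast\cap \Ker B^\ast$ (contradicting invertibility of $AA^\ast+BB^\ast$). The main obstacle is this last step: a general singular pencil carries polynomial rather than merely constant vectors in its left and right kernels, so extracting the desired common kernel vector requires either a careful Kronecker-form analysis or a determinantal identity finely adapted to the simultaneous rank conditions on $(A,B)$ and $(A^\ast,B^\ast)$.
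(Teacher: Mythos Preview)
Your forward direction is correct and essentially coincides with the paper's argument: the paper invokes Lemma~\ref{lemma:projmperp}(a) for the invertibility of $AA^\ast+BB^\ast$ and the form identity for $A^\ast A+B^\ast B$, exactly as you do.

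For the reverse direction the paper does \emph{not} go through the Kronecker form; it simply cites \cite[Proposition~3.3]{HKS2015} for the equivalence of regularity with the two conditions $\dim\cM(A,B)=d$ and $\Ker A\cap\Ker B=\{0\}$ (the ``$\neq$'' in the printed proof is a typo). So your approach differs from the paper's precisely at the point where you got stuck.

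Your hesitation about the Kronecker argument is well placed, and in fact the obstacle you describe is not merely technical: the implication you are trying to prove is \emph{false} for $d\geq 3$. Take
\[
A=\begin{pmatrix}1&0&0\\0&0&1\\0&0&0\end{pmatrix},\qquad
B=\begin{pmatrix}0&1&0\\0&0&0\\0&0&1\end{pmatrix}.
\]
Then $AA^\ast+BB^\ast=\mathrm{diag}(2,1,1)$ and $A^\ast A+B^\ast B=\mathrm{diag}(1,1,2)$ are both invertible, so $\Rank(A,\,B)=3$ and $\Ker A\cap\Ker B=\{0\}$. Yet
\[
A+\lambda B=\begin{pmatrix}1&\lambda&0\\0&0&1\\0&0&\lambda\end{pmatrix}
\]
has identically vanishing determinant (the second column is $\lambda$ times the first), so the pencil is irregular. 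In Kronecker language this is the direct sum of an $L_1$ column block and an $L_1^T$ row block, and neither contributes a nonzero vector to $\Ker A\cap\Ker B$ or to $\Ker A^\ast\cap\Ker B^\ast$; such blocks only appear together once $d\geq 3$, which explains why the claim survives for $d\leq 2$. Thus your proof cannot be completed as stated, and the paper's proof, resting on the cited equivalence, inherits the same gap: the corollary as formulated requires an additional hypothesis or a restriction on $d$.
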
	
\begin{proof}
	In \cite[Proposition 3.3]{HKS2015} it has been shown that $A,B$ define  regular boundary conditions if and only if $\dim(\cM(A,B))=d$ and $\Ker A \cap \Ker B\neq \{0\}$, where by 
	Lemma~\ref{lemma:projmperp} the condition $\dim(\cM(A,B))=d$ is equivalent to the invertibility of
	$AA^\ast + BB^\ast$. Moreover, $\Ker A \cap \Ker B\neq \{0\}$ is equivalent to the invertibility of
	$A^\ast A + B^\ast B$.
\end{proof}

\begin{remark}
To illustrate that there is no implication between the two conditions in Corollary~\ref{cor:regboudnary conditions}, consider $A,B$ which define irregular boundary conditions, \ie , $\dim(\cM(A,B))=d$, or equivalently by Lemma~\ref{lemma:projmperp}  $\Ker A^\ast\cap \Ker B^\ast= \{0\}$, and $\Ker A\cap \Ker B\neq \{0\}$, for example 
\begin{align*}
A=\begin{bmatrix}
1 & 0 \\ 0 & 0
\end{bmatrix} \quad \hbox{and} \quad B=\begin{bmatrix}
0 & 0 \\ 1 & 0
\end{bmatrix}.
\end{align*}
The other way round considering the boundary conditions defined by $A^\ast, B^\ast$, one observes that since $A,B$ define irregular boundary conditions the pencil $A-\lambda B$ is irregular as well, and it follows by taking adjoints that  $\det(A^\ast-\overline{\lambda} B^\ast)=0$ for all $\lambda\in \IC$, and hence also the pencil $A^\ast-\lambda B^\ast$ is irregular, and  
$\Ker A^\ast \cap \Ker B^\ast = \{0\}$, 
but these boundary conditions violate the rank condition, i.e., 
by Lemma~\ref{lemma:projmperp} $\Ker A \cap \Ker B \neq \{0\}$.
Note that in general  $\Delta(A,B)^\ast \neq \Delta(A^\ast, B^\ast)$ while equality holds for boundary conditions of the form \eqref{eq:bc_LP}, compare \cite[Subsections 3.6 and 3.7]{HKS2015}. Taking $A=B=0$, one sees that there are boundary conditions satisfying neither of both conditions.
\end{remark}

\begin{proposition}\label{prop:topology_bc}
	The set of regular boundary conditions is an open, connected and dense submanifold of $\Gr(d,2d)$. For $d\geq 2$ the set of quasi-sectorial boundary conditions is a dense subset, and neither open nor closed. 
\end{proposition}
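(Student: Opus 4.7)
The plan addresses each of the four claims separately, using Corollary~\ref{cor:regboudnary conditions} and Lemma~\ref{lemma:Nilpotent} to translate properties of boundary conditions into matrix conditions, Lemma~\ref{lemma:projmperp}(c) to transport convergence to $\Gr(d,2d)$, and the structure of $\Gr(d,2d)$ as an irreducible compact complex manifold.

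\textbf{Regular is an open connected dense submanifold.} The key observation is that $\cM\in\Gr(d,2d)$ is regular if and only if there exists $k\in\IC$ with $\cM \cap V_k = \{0\}$, where $V_k := \{(\chi,ik\chi):\chi\in\cK\}\subset\cK^2$. For each fixed $k$, this transversality of two $d$-dimensional subspaces inside the $2d$-dimensional space $\cK^2$ is a standard open condition on $\Gr(d,2d)$, so the regular set is a union of open sets and hence itself open; as an open subset of the complex manifold $\Gr(d,2d)$ it is automatically a submanifold. For density and connectedness, I would note that the complement is exactly the locus where the polynomial $\det(A-\lambda B)\in\IC[\lambda]$ vanishes identically; since the transformation $(A,B)\mapsto(CA,CB)$ with $C\in GL(\cK)$ merely rescales this polynomial by $\det C$, this condition descends to a well-defined Zariski closed subvariety of $\Gr(d,2d)$, proper because pure Dirichlet conditions ($A=\mathds{1}$, $B=0$) are regular. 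Since $\Gr(d,2d)$ is an irreducible complex projective manifold, the complement of a proper closed analytic subset is both dense and connected in the classical topology.

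\textbf{Quasi-sectorial is dense.} Given regular $\cM(A,B)$, I would perturb to $(A, B+\varepsilon\mathds{1}_\cK)$. For $\varepsilon\neq 0$ with $-\varepsilon\notin\sigma(B)$ -- which holds for arbitrarily small $\varepsilon>0$ -- the matrix $B+\varepsilon\mathds{1}$ is invertible, so the quasi-Weierstrass decomposition of Proposition~\ref{prop:quasiweierstrass} can be taken with $m=d$ and no nilpotent block, making the perturbed boundary condition quasi-sectorial. For small $\varepsilon$, $\Rank(A,B+\varepsilon\mathds{1})=d$ by lower semicontinuity of full rank, and Lemma~\ref{lemma:projmperp}(c) gives $\cM(A,B+\varepsilon\mathds{1})\to\cM(A,B)$ in $\Gr(d,2d)$; combined with the density of regular conditions, this yields density of quasi-sectorial conditions.

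\textbf{Neither open nor closed for $d\geq 2$.} Choose any nonzero nilpotent $N\in\IC^{d\times d}$, available since $d\geq 2$. For the not-open direction, $(\mathds{1},0)$ is quasi-sectorial ($m=0$, $N_B=0$), while for each $\varepsilon\neq 0$ the pair $(\mathds{1},\varepsilon N)$ has Cayley transform $-\mathds{1}+2ik\varepsilon N+\cdots$, a nonconstant polynomial in $k$, hence unbounded, so by Lemma~\ref{lemma:Nilpotent} the corresponding condition is not quasi-sectorial; yet $\cM(\mathds{1},\varepsilon N)\to\cM(\mathds{1},0)$ as $\varepsilon\to 0$. For the not-closed direction, the intermediate-type pair $(\mathds{1},N)$ is itself not quasi-sectorial (compare Example~\ref{ex:intermediate}), while $(\mathds{1},N+\varepsilon\mathds{1})$ has invertible second entry for small $\varepsilon\neq 0$ and is therefore quasi-sectorial, and converges to $(\mathds{1},N)$ in $\Gr(d,2d)$. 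The main delicacy throughout is to ensure that matrix-level statements descend correctly to $\Gr(d,2d)$, handled by the $GL(\cK)$-invariance noted above and by the intrinsic Lemma~\ref{lemma:Nilpotent}; relying on the irreducibility of $\Gr(d,2d)$ makes the connectedness step particularly clean, while an explicit path-connectedness argument through the quasi-Weierstrass form would be significantly messier.
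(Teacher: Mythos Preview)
Your argument is correct, and for the ``neither open nor closed'' part your examples coincide with the paper's. The interesting divergence is in how you establish that the regular set is open, dense and connected.

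The paper works hands-on: openness is read off from Corollary~\ref{cor:regboudnary conditions} via continuity of $(A,B)\mapsto |\det(AA^\ast+BB^\ast)|+|\det(A^\ast A+B^\ast B)|$ upstairs in $\IC^{d\times 2d}$ and then pushed down through the quotient map; density is obtained by the explicit perturbation $B_\varepsilon=B+\varepsilon P_{\Ker B}$ (making $B_\varepsilon$ invertible, hence quasi-sectorial, hence regular); and connectedness is proved by writing down an explicit path in the regular set from any $(A,B)$ to the Dirichlet point, using a block decomposition of $A$ with respect to $\Ker A$. You instead invoke the complex-analytic structure of $\Gr(d,2d)$: openness comes from the transversality characterisation $\cM\cap V_k=\{0\}$, and density plus connectedness follow in one stroke from the fact that the irregular locus $\bigcap_k\{\cM:\cM\cap V_k\neq\{0\}\}$ is a proper Zariski closed subset of the irreducible variety $\Gr(d,2d)$, whose complement in the classical topology is therefore automatically open, dense and connected. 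Your route is shorter and more conceptual, at the cost of importing standard facts about complex Grassmannians; the paper's route is entirely self-contained and yields slightly more, namely an explicit ``star-shaped'' contraction of the regular set to the Dirichlet boundary conditions.
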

\begin{proof}	
By Corollary~\ref{cor:regboudnary conditions} 
\begin{align*}
\IC^{d \times 2d}_{\hbox{reg}}:&= \{(A,\, B)\in \IC^{d \times 2d}\colon A,B \hbox{ define regular boundary conditions}   \} \\
&=\{(A,\, B)\in \IC^{d \times 2d}\colon |\det(AA^\ast + BB^\ast)|+ |\det(A^\ast A + B^\ast B)| \neq 0  \},
\end{align*}
and since $(A,\, B) \mapsto |\det(AA^\ast + BB^\ast)|+ |\det(A^\ast A + B^\ast B)|$ is continuous $\IC^{d \times 2d}_{\hbox{reg}}\subset \IC^{d \times 2d}$ is an open submanifold. Now, consider the quotient map
\begin{align*}
q\colon \IC^{d \times 2d}_{d}:= \{(A,B)\in \IC^{2d \times d}\colon \Rank(A,\,B)=d  \}  \rightarrow  \Gr(d,2d), \quad (A,\,B) \mapsto \cM(A,B),
\end{align*}
which identifies $q(A,B)=q(A',B')$ if $\cM(A,B)=\cM(A',B')$. By Lemma~\ref{lemma:projmperp} this is continuous, and in fact the topology on $\Gr(d,2d)$ coincides with the quotient topology induced by $q$.
Hence the set of regular boundary conditions $q(\IC^{d \times 2d}_{\hbox{reg}})\subset \Gr(d,2d)$ is open.

	Following the idea in the proof  \cite[Theorem 3.10]{HKS2015}, one can show that
	any boundary conditions $A,B$ with $\dim \cM(A,B)=d$ can be approximated by
	\begin{align*}
	A_\varepsilon :=A \quad \hbox{and} \quad B_\varepsilon := B + \varepsilon P_{\Ker B}, \quad \varepsilon>0,
	\end{align*}
	where $P_{\Ker B}$ is the orthogonal projection in $\cK$ to $\Ker B$. In particular $A_\varepsilon, B_\varepsilon$ define quasi-sectorial boundary conditions which implies the density of quasi-sectorial boundary conditions.   
	
	To prove that the set of regular boundary conditions in $\Gr(d,2d)$ is connected, consider for regular boundary conditions $A,B$ the  decomposition of $A$ with respect to $\Ker A$ and $(\Ker A)^\perp$ and define
	\begin{align*}
	A_{\varepsilon}:=\begin{bmatrix}
	P_{(\Ker A)^\perp} A P_{(\Ker A)^\perp} & 0 \\
	P_{\Ker A} A P_{(\Ker A)^\perp} & \varepsilon P_{\Ker A}
	\end{bmatrix}
	\quad \hbox{and}\quad B_{\varepsilon}:=(1-\varepsilon)B, \quad \varepsilon \in [0,1]
	\end{align*} 
where	$A_0=A$, $B_0=B$ and
	$A_\varepsilon$ is invertible for all $\varepsilon\in (0,1]$, hence $(A_\varepsilon, B_\varepsilon)$ is regular for all $\varepsilon\in [0,1]$. The  map 
	\begin{align*}
	\gamma\colon [0,1] \rightarrow \IC^{d \times 2d}_{d}, \quad \varepsilon \mapsto (A_\varepsilon, B_\varepsilon),\quad \hbox{hence also} \quad q\circ \gamma\colon [0,1] \rightarrow \Gr(d,2d), \quad \varepsilon \mapsto \cM(A_\varepsilon, B_\varepsilon)
	\end{align*}   
	is continuous.
	Moreover, $A_1$ is invertible and $B_1=0$, so  $\cM(A_1,B_1)$ defines the regular Dirichlet boundary conditions, and hence
 the manifold of regular boundary conditions is connected, and in fact even star shaped.	
 
 To show that the set of quasi-sectorial boundary conditions is not closed for  $d\geq 2$, consider for instance
 \begin{align*}
 A= \mathds{1}, \quad
  B=  N, 
\quad \hbox{and}\quad
 A_{\varepsilon}= \mathds{1}, \quad 
B_{\varepsilon}= 
N + \varepsilon \mathds{1} \hbox{ for } \varepsilon >0.
 \end{align*}  
 Then for nilpotent $N$, the matrices $A,B$ define regular but not quasi-sectorial boundary conditions while $A_\varepsilon, B_\varepsilon$ define quasi-sectorial boundary conditions for $\varepsilon>0$  because then  $B_\varepsilon$ is invertible. However $\cM(A_\varepsilon, B_\varepsilon)\to\cM(A, B)$ in $\Gr(d,2d)$ by Lemma~\ref{lemma:projmperp}, see also the proof of \cite[Theorem 3.10]{HKS2015}.
 
 Conversely, to show that the set of non-quasi-sectorial boundary conditions in $\Gr(d,2d)$ is not closed for  $d\geq 2$, let
   \begin{align*}
   A= \mathds{1}, \quad
   B= 0, 
   \quad \hbox{and}\quad
   A_{\varepsilon}= 
   \mathds{1}, \quad
   B_{\varepsilon}=  \varepsilon N
  \hbox{ for } \varepsilon >0.
   \end{align*}  
Then $A,B$ define quasi-sectorial boundary while for nilpotent $N$ the matrices  $A_\varepsilon, B_\varepsilon$ define regular but non-quasi-sectorial boundary conditions for $\varepsilon>0$ since then  $\varepsilon N$ is nilpotent, but $\cM(A_\varepsilon, B_\varepsilon)\to\cM(A, B)$ in $\Gr(d,2d)$ again by Lemma~\ref{lemma:projmperp}.
\end{proof}

\subsection{Comparison to the Birkhoff-Tamarkin theory}\label{subsec:birkhoff}
A now classical theory for boundary value problems on intervals has been initiated at the beginning of the 20th century by Birkhoff in the works \cite{Birkhoff1,Birkhoff2} and then continued by Tamarkin \cite{Tamarkin} and many others. It is elaborated in classical textbooks like \cite[Chapter XIX]{DSIII} or \cite{Naimark}. The Birkhoff-Tamarkin theory focuses on the convergence of the eigenfunction expansion for non-self-adjoint boundary conditions. This theory is still developing, see for instance \cite{Locker2000}, and also \cite{Freiling2012} for a survey on Birkhoff-irregular problems,  and the respective  references therein, and it also ramified including parameter dependent eigenvalue problems, see e.g. \cite{Mennicken2003}.

Central to the Birkhoff-Tamarkin theory is a notion of regularity of boundary conditions often referred to as Birkhoff-regularity which is related to the behaviour of determinants related to the eigenvalue equation. The Birkhoff-Tamarkin theory applies to general $n$-th order differential operators on intervals. Note that for the second derivative operator on an interval $[0,a]$ this is different from the notion of regularity promoted here, compare \cite[Section 3.3]{HKS2015}. The difference can be illustrated also using \cite[page 2344f.]{DSIII} where  all Birkhoff-irregular boundary conditions consisting of two linearly independent conditions are characterized
to be of the form
\begin{align*}
\psi'(0) + \gamma \psi'(a) + \alpha \psi(0) + \beta \psi(a)&=0 \quad \hbox{and}\quad
\psi(0) - \gamma \psi(a) =0, \hbox{ or } \\
\psi'(a) +\gamma  \psi'(0) + \alpha \psi(0) + \beta \psi(a)&=0 \quad \hbox{and}\quad
\psi(a) - \gamma \psi(0) =0, \quad \alpha, \beta, \gamma \in \IC,
\end{align*}
where the terms $+ \ldots$ in \cite[page 2344f.]{DSIII} are interpreted here as the terms with $\alpha$ and $\beta$.
In matrix form the first line translates to
\begin{align*}
A= \begin{bmatrix}
1 & -\gamma  \\
\alpha & \beta
\end{bmatrix} 
\quad \hbox{and}\quad
B= \begin{bmatrix}
0 & 0 \\
1 & -\gamma 
\end{bmatrix}.
\end{align*}
This is irregular in the sense of Definition~\ref{def:irregularbc} if $\Ker A\cap \Ker B \neq\{0\}$ since $\Rank (A\, B)=2$ for any $\alpha,\beta,\gamma \in \IC$. One has $\Ker A\cap \Ker B \neq\{0\}$ if and only if  $\alpha=\beta=0$ or $\beta= -\gamma\alpha$. If $\beta+\alpha\gamma \neq 0$ then $A$ is invertible, and equivalent boundary conditions are given by
\begin{align*}
A'=\mathds{1} \quad \hbox{and} \quad B'=A^{-1}B = \frac{1}{\beta+\alpha \gamma}\begin{bmatrix}
\gamma & - \gamma^2 \\ 1 & -\gamma 
\end{bmatrix},
\end{align*}
where $(B')^2=0$, and hence these boundary conditions are regular but not quasi-sectorial.

The other way round, in \cite[page 2344f.]{DSIII} it is also elaborated that all Birkhoff-regular boundary conditions fall under one of the following categories:
\begin{enumerate}[(a)]
	\item[(a)] $\psi'(0)+ \hbox{first order terms} =0$ and $\psi'(a)+ \hbox{first order terms} =0$, that is , in the formalism presented here this translates to $B=\mathds{1}$, where $A$ can be arbitrary. Hence these boundary conditions is quasi-sectorial.
	\item[(b)] $\psi(0)=0$ and $\psi(a)=0$, that is, one has the self-adjoint Dirichlet boundary conditions, which are in particular quasi-sectorial.
	 \item[(c1)]  $\psi'(0)+ \gamma \psi'(1) + \hbox{first order terms} =0$ 
	 and $\psi(a) =0$ for $\gamma \in \IC$, that is, in the first case
	 \begin{align*}
	 A=\begin{bmatrix}
	 a_{11} & 0 \\ 
	 0 & 1
	 \end{bmatrix}
	 \quad \hbox{and} \quad
	 B=\begin{bmatrix}
	 1 & -\gamma \\ 
	 0 & 0
	 \end{bmatrix},
	 \end{align*}
where for $a_{11}=0$ one has $m=d=2$ in the quasi-Weierstrass normal form, and hence quasi-sectorial boundary conditions, and for $a_{11}\neq 0$, one has equivalent boundary conditions $A'=\mathds{1}$ and $B'=\tfrac{1}{a_{11}}B$. Here, one observes that $B'$ is not nil potent and hence this is already the quasi-Weierstrass normal form with $m=d=2$ and $L=B'$, and in particular  these are quasi-sectorial.
	 \item[(c2)]  $\psi'(0)+ \gamma \psi'(1) + \hbox{first order terms} =0$  
	 and $\psi(0)+ \beta\psi(a) =0$ for $\beta,\gamma \in \IC$, with $\beta \neq -\gamma$,
	  that is, 
	 \begin{align*}
	 A=\begin{bmatrix}
	 0 & a_{12} \\ 
	 1 & \beta
	 \end{bmatrix}
	 \quad \hbox{and} \quad
	 B=\begin{bmatrix}
	 1 & -\gamma \\ 
	 0 & 0
	 \end{bmatrix}.
	 \end{align*}
	 If $a_{12}\neq 0$, then $A$ is invertible and one can consider equivalently
	 \begin{align*}
	 A'=\mathds{1} \quad \hbox{and} \quad B'=A^{-1}B = \frac{1}{-a_{12}}\begin{bmatrix}
	 \beta & -\beta\gamma \\ -1 & -\gamma 
	 \end{bmatrix},
	 \end{align*} 
	 where $B'$ is not nil potent, and using the same argument as above, it follows that  these are then quasi-sectorial.
	 If $a_{12}= 0$, then $\dim \Ker A=1$, and therefore $m=d=2$ in the quasi-Weierstrass normal form, and hence  one is in the quasi-sectorial case again. In case (c1) and case (c2), the corresponding conditions interchanging the endpoints of the interval, lead to the same result.	 
\end{enumerate}
Summarizing the above, one concludes that for the Laplacian on a compact interval, the notions of Birkhoff-regularity and quasi-sectoriality agree.

Furthermore, there is a characterization of Birkhoff-regularity by means of  estimates on the Green's function $r_{A,B}(\cdot,\cdot;k^2)$ to the boundary value problem $(-\Delta(A,B)-k^2)\psi = f$ for $f\in L^2([0,a];\IC)$ and $k^2\in \IC$, where one considers pointwise estimates of the type
\begin{align}\label{eq:Birkhoff-Stone}
|r_{A,B}(x,y;k^2)| \leq C |k|^{(\alpha-1)/2}, 
\end{align}
for a constant $C>0$ and $\alpha\geq 0$.
Then, the boundary value problem is Birkhoff-regular if \eqref{eq:Birkhoff-Stone} holds for $\alpha=0$ and
for $k^2$ along a halfline in the upper half plane, see for instance \cite[Lemma 2.2]{Freiling2012}. If such estimate hold for $\alpha>0$, then it is called Stone-regular
(or also almost regular \cite{Shkalikov1983} or mildly irregular \cite{Vass1936}) of order $\alpha$, see  \cite[Section 2]{Freiling2012} for a comprehensive survey. 
From the discussion of the Green's function or resolvent kernel $r_{A,B}$ in Subsection~\ref{subsec:resolvent_reg} below, one concludes that for the Laplacian on a compact interval, the notions of Birkhof- regularity and quasi-sectoriality agree, while the regular but non-quasi-sectorial boundary conditions are Birkhoff-irregular but Stone-regular for some $\alpha \in \IN$, where the minimal $\alpha$ is determined by the nilpotent part in the quasi-Weierstrass normal form. Irregular boundary conditions correspond to Stone-irregular boundary conditions.



%

The above considerations are restricted to a compact interval. However, there is also a system version of the Birkhoff-Tamarkin theory which is presented for instance in the book by Naimark \cite[Chapter III $\S$ 8.4]{Naimark}. To highlight the difference to the theory presented here,
consider the second derivative operator on two intervals $[0,1]$ with boundary conditions
\begin{align*}
A= \begin{bmatrix}
1 & 0 & 0 & 0 \\
0 & 1 & 0 & 0 \\
0 & 0 & 1 & -1 \\
0 & 0 & 0 & 0 \\
\end{bmatrix} 
\quad \hbox{and}\quad
B= \begin{bmatrix}
0 & 0 & 0 & 0 \\
0 & 0 & 0 & 0 \\
0 & 0 & 0 & 0 \\
0 & 0 & 1 & 1 \\
\end{bmatrix}.
\end{align*}
This corresponds to
\begin{align*}
\varphi_1(0)=0, \quad\varphi_2(0)=0, \quad \hbox{and} \quad \varphi_1(1)=\varphi_2(1), \quad \varphi_1'(1)=-\varphi_2'(1),
\end{align*}
and $-\Delta(A,B)$ is equivalent to the Dirichlet Laplacian on $[0,2]$, where one identifies the two copies of $[0,1]$ glued together at their endpoints with the interval $[0,2]$. 

Translating this into the formalism of Naimark, one has in its notation  $n=2$ (order of the operator) and $m=2$ (number of edges), and the boundary conditions are of the form
\begin{align*}
U_1(\varphi) = U_{10}(\varphi) + U_{11}(\varphi), \quad \hbox{and}\quad
U_2(\varphi) &= U_{20}(\varphi) + U_{21}(\varphi), 
\end{align*}
where for the orders of the boundary conditions  $k_0=0$ and $k_1=1$ one has
\begin{eqnarray*}
	U_{10}(\varphi) = A_1 \varphi(0), & & U_{11}(\varphi)=B_1 \varphi(1),  \hbox{ and}\\
	U_{20}(\varphi)= A_2 \varphi'(0)+ A_{20} \varphi(0), & &
	U_{21}(\varphi) = B_2\varphi'(1) + B_{20}\varphi(1)
\end{eqnarray*}
with
\begin{align*}
A_1 = \begin{bmatrix}
1 & 0 \\ 0 & 1
\end{bmatrix}, \quad B_1=\begin{bmatrix}
0 & 0 \\ 0 & 0 
\end{bmatrix}, \quad A_2=\begin{bmatrix}
0 & 0 \\ 0 & 0 
\end{bmatrix}, \quad A_{20}=
\begin{bmatrix}
0 & 0 \\ 0 & 0 
\end{bmatrix}, \quad
B_2=  \begin{bmatrix}
0 & 0 \\ 1 & 1
\end{bmatrix}, \quad B_{20}= \begin{bmatrix}
1 & -1 \\ 0 & 0 
\end{bmatrix}.
\end{align*}
Then regularity of the boundary conditions is defined with $\omega_1=i$ and $\omega_2=-i$ via
\begin{align*} 
\Phi(s)=\begin{bmatrix}
(A_1 + sB_1)\omega_1^{k_1} & (A_1 + (1/s)B_1)\omega_2^{k_1} \\
(A_2 + sB_12)\omega_1^{k_2} & (A_2 + (1/s)B_2)\omega_2^{k_2}
\end{bmatrix} = \begin{bmatrix}
1 & 0 & 1 & 0 \\
0 & 1 & 0 & 1 \\
0 & 0 & 0 & 0 \\
i s & i s & -i s & -i s
\end{bmatrix}, \quad s\in \IC, 
\end{align*}
where boundary conditions are regular if for
\begin{align*}
\det \Phi(s) = \varphi_{-2}s^{-2} + \varphi_{-1}s^{-1} + \varphi_{0} + \varphi_{1}s^{1} + \varphi_{2}s^{1}
\end{align*}
one has $\varphi_{-2}\neq 0$ and $\varphi_{2}\neq 0$. This is not satisfied here since $\det \Phi(s) =0$.

Herefrom, one sees that the notion of Birkhoff-regularity for systems can produce artifacts when inserting artificial  vertices. 
This indicates that this notion of regularity is not always compatible with a geometric interpretation of a system of ordinary differential equations such as graphs, and when looking for generators of $C_0$-semigroups it leaves out many relevant cases.  Moreover, while the Birkhoff-regularity is a powerful tool to check for Riesz basis properties of eigenfunctions, its verification is usually quite laborious. In contrast using for instance Corollary~\ref{cor:regboudnary conditions}, one can check for regularity in the sense of Definition~\ref{def:irregularbc} on the level of linear algebra. 

\begin{remark}
As pointed out above,  for the case of one compact interval the notions of Birkhoff-regularity and quasi-sectoriality introduced here agree. This equivalence has been quite a surprise to the author, and it invites to some speculative outlook. First, the generalisation of quasi-sectorial boundary conditions to differential operators of even order might be feasible, and replacing the quite complicated Birkhoff regularity condition by some simple criteria on the line of Corollary~\ref{cor:regboudnary conditions}
might be advantageous for applications. Second, one of the key features of Birkhoff-regular operators is the existence of a Riesz basis of eigenvectors. Interpreting operators with quasi-sectorial boundary conditions as a generalization of Birkhoff-regular operators on intervals, it is tempting to conjecture  the existence of Riesz basis (more likely with than without parenthesis) for such operators on compact finite metric graphs.
\end{remark}

\section{Semigroup generation}\label{sec:semigroup}
Using the notion of quasi-sectorial boundary conditions introduced in Definition~\ref{def:quasisec}, one obtains a complete picture for which Laplacians the first and second order Cauchy problems 
\begin{align*}
\partial_t\psi  - \Delta(A,B)\psi&=0, \quad t>0,\quad \psi(0)=\psi_0, \hbox{ and }\\
 \partial_t^2\psi  - \Delta(A,B)\psi&=0, \quad t>0,\quad \psi(0)=\psi_0, \quad \psi_t(0)=\varphi_0,
\end{align*}
are well-posed in terms of $C_0$-semigroups, analytic semigroups, and $C_0$-cosine operator functions, see e.g. \cite{AreBatHie01} for the definitions, and also \cite{Bobrowski2013} for a discussion of the differences between cosine families and semigroups. Note that the same set of boundary conditions can be imposed on finite metric graphs with different  geometries as illustrated in Figure~\ref{fig:graph_geometries}, where moreover, the length of the internal edges can be varied. However, the characterization given below depends only on the boundary conditions and not on the geometry.

\begin{theorem}[Characterisation of generators of $C_0$-semigroups]\label{thm:semigroup}
	Let $(\cG,\au)$ be a finite metric graph.
	Then $\Delta(A,B)$ is the generator of a $C_0$-semigroup in $L^2(\cG,\au)$ if and only if $A,B$ define quasi-sectorial boundary conditions in the sense of Definition~\ref{def:quasisec}.
If $\Delta(A,B)$ is the generator of a $C_0$-semigroup in $L^2(\cG,\au)$, then 
this semigroup extends to an analytic semigroup, and  $\Delta(A,B)$ generates also a $C_0$-cosine operator function. 
\end{theorem}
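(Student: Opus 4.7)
The plan is to prove the equivalence in each direction using the quasi-Weierstrass normal form as the central structural tool.

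\emph{Sufficiency.} Assume $A,B$ are quasi-sectorial. By the quasi-Weierstrass normal form together with the discussion preceding Definition~\ref{def:quasisec}, there are equivalent boundary conditions of the form
\begin{align*}
A' = G^{-1}(L+P)G, \quad B' = G^{-1}P^{\perp}G,
\end{align*}
with $G$ invertible on $\cK$, $P$ an orthogonal projection, and $P^{\perp}L=LP^{\perp}$. On a star graph one has $L^2(\cG,\au)=\bigoplus_{e\in\cE}L^2([0,\infty);\IC)$ indexed by the edge endpoints, so the matrix $G$ induces by pointwise action a bounded invertible operator $\mathcal{G}$ on $L^2(\cG,\au)$ with $\mathcal{G}\,\Delta(A,B)\,\mathcal{G}^{-1}=\Delta(L+P,P^{\perp})$. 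The right-hand side is associated with the closed sectorial Lions form $\delta_{P,L}$ from~\eqref{deltaPL} (which stays sectorial for non-self-adjoint $L$ commuting with $P^{\perp}$), and hence generates an analytic semigroup. The explicit half-line second-derivative structure additionally yields the sharper resolvent bound $\|R(\lambda^2)\|=O(1/(|\lambda||\Re\lambda|))$ required for a $C_0$-cosine operator function.

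For a general finite metric graph with internal edges the reflection principle fails, because each internal edge carries one function but two endpoints. My plan here is to localize: use a smooth partition of unity subordinate to neighbourhoods of the vertices, extend each vertex neighbourhood to a suitable star-shaped model on which the previous construction applies, and patch the modified local inner products together to obtain an equivalent inner product on $L^2(\cG,\au)$. In this new inner product $-\Delta(A,B)$ is associated with a closed sectorial Lions-type form on a closed subspace of $H^1(\cG,\au)$ determined by the Dirichlet part of the boundary condition, and standard Lions form theory together with the half-line/interval resolvent estimates then yield both the analytic semigroup and the cosine family.

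\emph{Necessity.} If $A,B$ are not quasi-sectorial, then either they are irregular, or they are regular with $N_B\neq 0$. In the irregular case the paper establishes (Subsection~\ref{subsec:irr}) the exponential lower bound $\|(-\Delta(A,B)-\lambda)^{-1}\|\gtrsim e^{c\sqrt{|\lambda|}}$ as $\lambda\to-\infty$, which is incompatible with the Hille--Yosida bound $\|R(\lambda)\|=O(1/|\lambda|)$ required of any $C_0$-semigroup generator. In the regular case with $N_B\neq 0$, Lemma~\ref{lemma:Nilpotent} gives $\|\mathfrak{S}(k,A,B)\|\gtrsim|k|$ as $|k|\to\infty$ away from the finitely many poles. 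Inserting this into the explicit Green's function expression of $(-\Delta(A,B)-k^2)^{-1}$ (of the same form as for self-adjoint graph Laplacians, cf.~\cite{KS1999,KSP2008}), the linear growth of $\mathfrak{S}$ transfers to a polynomial lower bound on the resolvent norm as $k\to+\infty$, again violating Hille--Yosida.

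\emph{Main obstacle.} The technical heart of the argument is the cut-off and extension construction in the presence of internal edges. The local star-shaped models must be chosen so that the patched bilinear form is closed and sectorial on the correct subspace of $H^1(\cG,\au)$, so that the modified inner product is uniformly equivalent to the canonical one, and so that the associated operator coincides with $\Delta(A,B)$ on its natural $H^2$-domain. Extracting the parabola-shaped resolvent estimate needed for the cosine family from the Lions form framework is the remaining delicate point, although it is morally a consequence of the underlying second-order structure on each edge.
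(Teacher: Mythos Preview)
Your proposal is correct and follows essentially the same route as the paper: the sufficiency direction is handled precisely via the localized cut-off/extension construction producing an equivalent inner product in which $-\Delta(A,B)$ is associated with a closed sectorial form of Lions type (this is the content of Subsection~\ref{subsec:forms} and Lemma~\ref{lemma:quasi-sectorial}), and the necessity direction is handled via the resolvent lower bounds of Lemma~\ref{lemma:res_irr} (irregular case) and Lemma~\ref{lemma:res} (regular with $N_B\neq 0$).

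Two minor clarifications are worth making. First, once the form is shown to be of Lions type, the $C_0$-cosine family follows directly from standard theory (e.g.\ \cite[Theorem~6.18]{Mug_book}); there is no need to extract a separate parabola-shaped resolvent estimate, so what you flag as the ``remaining delicate point'' is in fact automatic. Second, in the regular non-quasi-sectorial case the resolvent does not grow polynomially but rather decays too slowly: the precise statement (Lemma~\ref{lemma:res}) is $\norm{(-\Delta(A,B)+\kappa^2)^{-1}}\geq C/\kappa$ as $\kappa\to\infty$, which already contradicts the Hille--Yosida bound $O(1/\kappa^2)$.
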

  
 The proof of this theorem needs some preparations, and it is completed eventually in Subsection~\ref{subsec:proof_thm_semigroup} below.   
  
\begin{remarks}
	\begin{enumerate}[(a)]
		\item 	Generators of $C_0$-cosine operator functions generate also $C_0$-semigroups, see \cite[Theorem 3.14.17 and its proofs]{AreBatHie01}. Therefore, Theorem~\ref{thm:semigroup} characterizes also the generators of $C_0$-cosine operator functions, and since analytic semigroups are in particular $C_0$-semigroups, it characterizes also generators of analytic semigroups, where in both cases $\Delta(A,B)$ defines a generator if and only if the boundary conditions defined by $A,B$ are quasi-sectorial.
		\item 	Note that Theorem~\ref{thm:semigroup} deals with general $C_0$-semigroups with
		\begin{align*}
		\norm{e^{\Delta(A,B)t}} \leq C e^{\mu t}, \quad t\geq 0, \quad \hbox{for some}\quad  C>0 \hbox{ and } \mu \in \IR.
		\end{align*}
		The question when one can chose $C=1$, \ie , when $\Delta(A,B)$ is quasi-dissipative (or equivalently $-\Delta(A,B)$ is quasi-accretive) has been discussed in \cite[Theorem 3.1]{Hussein2014}, and this is the case if and only if there are equivalent boundary conditions of the form \eqref{eq:bc_LP}. For this case, bounds on $\mu>0$ are discussed in \cite{Hus2013}. The semigroup is bounded if one can chose $\mu=0$, and the case $\mu=0$ and $C=1$ has been characterized in  \cite[Theorem 3.2]{Hussein2014}.
		The boundedness of the semigroup, that is the case for general $C>0$ and $\mu\leq 0$, has  been addressed at least for point interactions in \cite[Theorem 3.1 (d)]{HusMug2019}.       
	\end{enumerate}
\end{remarks}

 There has been a number of results on some classes of boundary conditions defining generators of analytic semigroups, see e.g. \cite{Kramar2007, Mug07, Mug_book, KSP2008} which however do not include a characterization. Some  boundary conditions of the form \eqref{eq:bc_LP} defining generators of $C_0$-cosine operator function are considered also in \cite{Mug_book}, and also in \cite{EngKra19} using different methods and considering more general elliptic second order operators on graphs. Apart from this, there is an extensive literature on the wave equation on networks, see e.g.
 \cite{Ali1994, Ali1984, Lagnese1994, Kuchment2002,  Zuazua2006, Kramar2007, Jacob2015, Kloss2012, KPS2012}
   and the references therein, and even more so 
 for the heat equations, see e.g. \cite{Below1988, Bobrowski2012, KSP2008, KPS2012_b, KPS2007, Banasiak2016} and the many  the references given therein. 

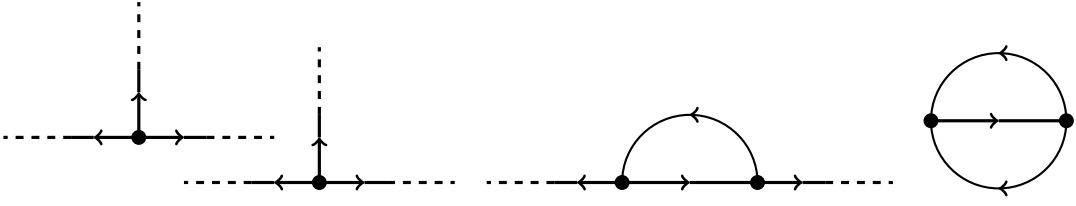
\begin{figure}
		\subfigure{
		\begin{tikzpicture}[scale=0.6]
			\fill[black] (0,0) circle (1ex);
	\draw[->, black, very thick] (0,0) -- (-1,0);
	\draw[black, very thick] (-1,0) -- (-1.5,0);
	\draw[black, very thick, dashed] (-1.5,0) -- (-3,0);
	\draw[->, black, very thick] (0,0) -- (1,0);
	\draw[black, very thick] (1,0) -- (1.5,0);
	\draw[black, very thick, dashed] (1.5,0) -- (3,0);
		\draw[->, black, very thick] (0,0) -- (0,1);
	\draw[black, very thick] (0,1) -- (0,1.5);
	\draw[black, very thick, dashed] (0,1.5) -- (0,3);
	\fill[black] (-4,1) circle (1ex);
		\draw[->, black, very thick] (-4,1) -- (-5,1);
	\draw[black, very thick] (-5,1) -- (-5.5,1);
	\draw[black, very thick, dashed] (-5.5,1) -- (-7,1);
	\draw[->, black, very thick] (-4,1) -- (-3,1);
	\draw[black, very thick] (-3,1) -- (-2.5,1);
	\draw[black, very thick, dashed] (-2.5,1) -- (-1,1);
		\draw[->, black, very thick] (-4,1) -- (-4,2);
	\draw[black, very thick] (-4,2) -- (-4,2.5);
	\draw[black, very thick, dashed] (-4,2.5) -- (-4,4);
			\end{tikzpicture}
	}
	\subfigure{
	\begin{tikzpicture}[scale=0.6]
				\fill[black] (0,0) circle (1ex);
						\fill[black] (-3,0) circle (1ex);
							\draw[->, black, very thick] (-3,0) -- (-1.5,0);
						\draw[black, very thick] (-1.5,0) -- (0,0);
							\draw[thick] (0,0) arc (0:180:1.5);
						\draw[<-, black, very thick] (-1.5,1.5) -- (-1.45,1.5);
							\draw[->, black, very thick] (0,0) -- (1,0);
						\draw[black, very thick] (1,0) -- (1.5,0);
						\draw[black, very thick, dashed] (1.5,0) -- (3,0);
							\draw[->, black, very thick] (-3,0) -- (-4,0);
						\draw[black, very thick] (-4,0) -- (-4.5,0);
						\draw[black, very thick, dashed] (-4.5,0) -- (-6,0);
			\end{tikzpicture}
}
		\subfigure{
	\begin{tikzpicture}[scale=0.6]
	\fill[black] (0,0) circle (1ex);
\fill[black] (-3,0) circle (1ex);
\draw[->, black, very thick] (-3,0) -- (-1.5,0);
\draw[black, very thick] (-1.5,0) -- (0,0);
\draw[thick] (0,0) arc (0:180:1.5);
\draw[<-, black, very thick] (-1.5,1.5) -- (-1.45,1.5);	
\draw[thick] (0,0) arc (0:-180:1.5);
\draw[<-, black, very thick] (-1.5,-1.5) -- (-1.45,-1.5);	
	\end{tikzpicture}
}
	\caption{Same boundary conditions for different geometries}\label{fig:graph_geometries}
\end{figure}

\subsection{Nilpotent matrices and resolvent estimates}\label{exDS}
Before discussing the proof of Theorem~\ref{thm:semigroup}, it is instructive to have  a closer look on the mechanism which prevents certain regular non-quasi-sectorial boundary conditions from defining a generator of a $C_0$-semigroup.
Consider the interval $[0,1]$ 
	and as in Example~\ref{ex:intermediate} the regular boundary conditions defined by $\psi(0)=0$ and $\psi(1)-\psi^{\prime}(0)=0$, i.e.,
	\begin{eqnarray*}
		A=\begin{bmatrix}1 & 0 \\ 0 & 1  \end{bmatrix}  & \mbox{and} & B=\begin{bmatrix} 0 & 0 \\ -1 & 0  \end{bmatrix}, \quad \hbox{where } \mathfrak{S}(k;A,B) = -\begin{bmatrix}
			1 & 0 \\ 2ik & 1
		\end{bmatrix}, \quad k\in \IC.
	\end{eqnarray*}
	%
		By Lemma~\ref{lemma:Nilpotent} and  Theorem~\ref{thm:riesz}  $\Delta(A,B)$ does not generate a $C_0$-semigroup on $L^2(\cG,\au)$. 
			This is an illustrative example for regular but non-quasi-sectorial boundary conditions.  It stands here as an exemplification of boundary conditions of the type $A=\mathds{1}$ and $B=N$ where $N$ is nilpotent.

		The same boundary conditions defined by $A,B$ given above can also be considered on a graph with only two external edges, \ie, $\cI=\emptyset$ and $|\cE|=2$ with $\cE=e_1 \cup e_2$. Then the resolvent kernel 
	   is given by
		\begin{align*}
		r_{\cM}(x,y;k) = \frac{i}{2k}\left\{\begin{bmatrix}
		e^{ik|x_1-y_1|} & 0 \\ 0 & e^{ik|x_2-y_2|}
		\end{bmatrix}
		+
		\begin{bmatrix}
		e^{ikx_1} & 0 \\ 0 & e^{ikx_2}
		\end{bmatrix}
		\mathfrak{S}(k;A,B)
		\begin{bmatrix}
		e^{iky_1} & 0 \\ 0 & e^{iky_2}
		\end{bmatrix}
		\right\}.
		\end{align*}
		Therefrom, the strategy of the proof of Theorem~\ref{thm:semigroup} becomes apparent when one takes into account Lemma~\ref{lemma:Nilpotent}, see also \cite[Lemma 5.3]{HusMug2019}. In particular in the case considered here, for $k=i\kappa$ with $\kappa \to \infty$, the resolvent does not exhibits the 
		decay $(-\Delta(A,B)+\kappa^2)^{-1}\lesssim \cO(1/\kappa^2)$  necessary for semigroup generators. 
	
For a  more heuristic interpretation  
	one can consider the resolvent problem $$(-\Delta(A,B)-k^2)\psi=f$$ more directly using the Dirichlet Laplacian $-\Delta(\mathds{1},0)$ on $[0,1]$. Then using the \textit{Ansatz}
	\begin{align*}
	\psi= \psi_1+ \psi_0, \quad \hbox{where} \quad \psi_1=(-\Delta(\mathds{1},0)-k^2)^{-1}f, \quad \Im k>0, 
	\end{align*}
	one finds that 
	\begin{align*}
	\psi_0(x) =  \sin(kx) \frac{\psi_1'(0)}{\sin(k)-1}, \quad \Im k>0,
	\end{align*}
	and since $-\Delta(\mathds{1},0)\psi_1= f + k^2 \psi_1$, where  $-\Delta(\mathds{1},0)= D_0^\ast D_0$ with $D_0 \psi = \psi'$ and $\Dom (D_0)= H_0^1([0,1];\IC)$,
	\begin{align*}
	\psi_1'(x) = (D_0^\ast)^{-1}(f + k^2 \psi_1)(x) = \int_{x}^1 f(y) + k^2 \psi_1(y) dy,
	\end{align*}
	where $(D_0^\ast)^{-1}\colon L^2([0,1])\rightarrow \Ran D_0$.
	Using the trace or evaluation operator $\gamma_0$ which maps $\psi \mapsto \psi(0)$, this implies that
		\begin{align*}
	|\psi_1'(0)| = |\gamma_0\circ (D_0^\ast)^{-1}(f + k^2 \psi_1)|&\leq \norm{\gamma_0\circ(D_0^\ast)^{-1}}_{\cL(L^2;\IC)}\left( \norm{f}_{L^2} +  \norm{k^2(-\Delta(\mathds{1},0)-k^2)^{-1}f}_{L^2} \right) \\
	&\leq C \norm{f}_{L^2}
	\end{align*}
	for some $C>0$ and any $k^2\in \IC\setminus \Sigma_{0,\theta}$ for $\theta \in (0,\pi/2)$, where $\Sigma_{0,\theta}:=\{z\in \IC \colon |\arg (z)| \leq \theta \}$. Here one uses that the trace operator $\gamma_0\colon H^1([0,1];\IC)\rightarrow \IC$ is bounded, $$\Ran (D_0^\ast)^{-1}= \{\psi\in H^1([0,1];\IC)\colon \int_{[0,1]} \psi =0\},$$ and hence $\gamma_0\circ (D_0^\ast)^{-1}\in \cL(L^2(\cG,\au);\IC)$,  and  that
the Dirichlet Laplacian generates an analytic semigroup. Summarizing, the trace $\psi_1'(0)$ is given as a $0$-th order operator applied to $f$ the norm of which can be estimated independent of $k$.
	
	More concretely, choosing for instance  an eigenfunction of the Dirichlet Laplacian as right hand side
	\begin{align*}
	f_{\kappa}(x)= \sin (\pi \kappa x), \quad \kappa\in \IN, \quad \hbox{then}\quad \psi_1(x;\kappa) = \frac{\sin (\pi \kappa  x)}{\pi^2 \kappa^2 -k^2}, \quad \hbox{and}\quad \psi_1'(0;\kappa) = \frac{\pi l}{\pi^2 \kappa^2 -k^2},
	\end{align*}
	if $k^2 \neq \kappa^2 \pi^2$. Then for $k=i\kappa$
	\begin{align*}
	\norm{\psi_1(\cdot;\kappa)} = \cO(1/\kappa^2) \quad \hbox{while}\quad  \norm{\psi_0(\cdot;\kappa)} = |\psi_1'(0;\kappa)|\cO(1/\sqrt{\kappa})=\cO(1/\kappa^{3/2}) \quad \hbox{as} \quad \kappa \to \infty.
	\end{align*}
	
The role of the Dirichlet problem and the behaviour of the trace of the derivative of its solution  become even more apparent when $A,B$ given above are considered on a graph with only two external edges, \ie, $\cI=\emptyset$ and $|\cE|=2$ with $\cE=e_1 \cup e_2$.  Then the resolvent problem can be rewritten to a coupled system 
\begin{align*}
(-\tfrac{d^2}{dx_1^2}-k^2) \psi_1 &= f_1, \quad \psi_1(0)=0, \\
(-\tfrac{d^2}{dx_1^2}-k^2) \psi_2 &= f_2, \quad \psi_2(0)=\psi_1'(0). 
\end{align*}
This can be solved iteratively, to obtain first, using the Dirichlet Laplacian $-\Delta(1,0)$ on $[0,\infty)$, that
\begin{align*}
\psi_1 = (-\Delta(1,0)-k^2)^{-1}f_1\quad \hbox{if} \quad \Im k>0.
\end{align*}
The second equation is then a boundary value problem with inhomogeneous Dirichlet boundary conditions a solution of which is given by
\begin{align*}
 \psi_2(x) = e^{ikx}\psi_1'(0)+  ((-\Delta(1,0)-k^2)^{-1}f_2)(x), \quad x\in e_2, \quad \hbox{if} \quad \Im k>0,
\end{align*} 
and $\psi_1'(0) = \gamma_0 \circ \partial_{x_1}(-\Delta(1,0)-k^2)^{-1}f_1$, hence $|\psi_1'(0)|\leq  \cO(\norm{f_1})$.
Now for $k=i\kappa$ with $\kappa>0$ 
\begin{align*}
\norm{e^{-\kappa x}\psi_1'(0)}_{L^2}  \lesssim \cO(\norm{f_2}_{L^2}/\kappa) \quad \hbox{as}\quad \kappa \to \infty.
\end{align*}
This resembles the resolvent estimate of a first order operator and not that of a second order operator. 

Interchanging $A$ and $B$, \ie, one can consider
	\begin{eqnarray*}
	A=\begin{bmatrix} 0 & 0 \\ -1 & 0  \end{bmatrix}  & \mbox{and} & B=\begin{bmatrix}1 & 0 \\ 0 & 1  \end{bmatrix},
\end{eqnarray*}
and then the resolvent behaviour is rather different,
Since $B$ is invertible, this defines quasi-sectorial boundary conditions. The resolvent problem can be rewritten to become
\begin{align*}
(-\tfrac{d^2}{dx_1^2}-k^2) \psi_1 &= f_1, \quad \psi_1'(0)=0, \\
(-\tfrac{d^2}{dx_1^2}-k^2) \psi_2 &= f_2, \quad \psi_2'(0)=\psi_1(0),
\end{align*}
which can be solved iteratively, where now using the Neumann Laplacian $-\Delta_1(0,1)$ on $[0,\infty)$ 
\begin{align*}
\psi_1 = (-\Delta_1(0,1)-k^2)^{-1}f_1\quad \hbox{for} \quad \Im k>0.
\end{align*}
The second equation is then a boundary value problem with inhomogeneous boundary conditions a solution of which is given by
\begin{align*}
\psi_2 = \frac{e^{ikx}}{ik}\psi_1(0)+  (-\Delta_2(0,1)-k^2)^{-1}f_2 \quad \hbox{for} \quad \Im k>0,
\end{align*} 
and $\psi_1(0)= \gamma_0(-\Delta_1(0,1)-k^2)^{-1}f_1$. For $k=i\kappa$ with $\kappa >0$ it follows that
\begin{align*}
|\psi_1(0)|= |\gamma_0 (-\Delta_1(0,1)+1)^{-1/2} (-\Delta_1(0,1)+1)^{1/2}(-\Delta_1(0,1)+\kappa^2)^{-1}f_1|\lesssim \cO(\norm{f_1}/\kappa), \quad \kappa \to \infty,
\end{align*}
where one uses that the operator $\gamma_0 (-\Delta_1(0,1)+1)^{-1/2}$  is bounded, and that
 by interpolation 
 $\norm{(-\Delta_1(0,1)+1)^{1/2}(-\Delta_1(0,1)+\kappa^2)^{-1}}\lesssim \cO(1/\kappa)$ as $\kappa\to\infty$.
Hence  $\norm{e^{-\kappa x}\psi_1'(0)}_{L^2}\lesssim \cO(\norm{f_2}_{L^2}/\kappa^2)$ as $\kappa \to \infty$ which is compatible with the properties of $C_0$-semigroup generators. 

\subsection{Quadratic forms for quasi-sectorial boundary conditions}\label{subsec:forms}
In many instances spectral properties of operator can be drawn back to quadratic forms. Here, integration by parts gives  
\begin{equation*}
\langle -\Delta(A,B)\psi, \psi \rangle
=\int_{\cG} \abs{ \psi^{\prime}}^2 
+ \langle \underline{\psi}',\underline{\psi}\rangle_{\cK}, \quad \psi \in \Dom(\Delta(A,B)).
\end{equation*}
If the boundary conditions are of the form \eqref{eq:bc_LP}, then
\begin{align*}
 \langle \underline{\psi}',\underline{\psi}\rangle_{\cK}= \langle -LP^\perp\underline{\psi},P^\perp\underline{\psi}\rangle_{\cK} \quad  \hbox{for }\psi \in \Dom(\Delta(A,B)),
\end{align*}
 and hence this defines a closable sectorial form the closure of which is associated with $-\Delta(A,B)$. However if this is not the case, then
not all the derivative terms at the vertices  cancel out, and 
therefore the numerical range is not confined to a sector, see for instance \cite{Hussein2014}. This can be illustrated in the following example.
\begin{example}[$\cP\cT$-symmetric point interaction -- continuation of Example~\ref{ex1_quasi_weiserstrass}]\label{ex1}
	Considering the setting of Example~\ref{ex1_quasi_weiserstrass},
Then the quadratic form defined by the operator $-\Delta(A_{\tau},B_{\tau})$ simplifies by integrating by parts and inserting the boundary conditions to become  
	\begin{equation*}
	\langle -\Delta(A_{\tau},B_{\tau})\psi,\psi\rangle = \int_{\cG} \abs{\psi^{\prime}}^2 + (1-e^{2i\tau}) \psi_2(0)\overline{\psi_2^{\prime}(0)} \quad \psi\in \Dom(-\Delta(A_{\tau},B_{\tau})).
	\end{equation*}
	In particular, the derivative term cannot be avoided, and the numerical range is entire $\IC$ for all $\tau\in(0,\pi/2]$. However, despite the numerical range for
	$\tau \in [0,\pi/2)$ the operator $-\Delta(A_{\tau},B_{\tau})$ 
	is similar to the self-adjoint Laplacian $-\Delta(A_{0},B_{0})$ and hence the generator of an analytic semigroup. Note that $A_{\tau},B_{\tau}$ define regular boundary conditions with $k$-independent
	Cayley transform.
	Hence, this is an example of quasi-sectorial boundary conditions which are not in the form ~\eqref{eq:bc_LP}.  
\end{example}

This exemplifies also the well-known fact that the numerical range of an operator is not stable under similarity transforms. It underlines that one cannot use always the usual $L^2$-scalar product to relate such operators to forms.

The similarity transform for star graphs introduced in \cite[Section 6]{HKS2015} gives for the case $\cI=\emptyset$ with quasi-sectorial boundary conditions a similarity transform to a Laplacian with boundary conditions of the form \eqref{eq:bc_LP}.  
The strategy here for the case $\cI\neq\emptyset$  is to take this similarity transform as an inspiration for a localization procedure  to construct an adjusted scalar product, and thereby to associate quasi-sectorial Laplacians with closed forms in this new setting.

To this end, consider first  cut-off functions
\begin{align*}
\chi^i_{+}, \chi^i_{-}, \chi_0^i &\colon [0,a_i] \rightarrow  [0,1] \quad \hbox{with} \quad (\chi_+^i)^2 + (\chi_-^i)^2 +(\chi_0^i)^2\equiv 1 \quad \hbox{for } i\in \cI, \hbox{ and } \\
\chi^i_{-}, \chi_0^i &\colon [0,\infty) \rightarrow [0,1] \quad \hbox{with} \quad (\chi_-^i)^2 +(\chi_0^i)^2\equiv 1 \quad \hbox{for } i\in \cE, 
\end{align*}
such that for $a_{\min}:=\min_{i\in \mathcal{I}} a_i$ if $\cI\neq \emptyset$ and $a_{\min}>0$ arbitrary if $\cI=\emptyset$, one has
\begin{align*}
\supp \chi^i_{+}\subset [a_i-a_{\min}/2, a_i], \quad  \supp \chi^i_{-}\subset [0, a_{\min}/2], \quad  \supp \chi^i_{0}\subset [a_{\min}/4,  a_i-a_{\min}/4]\quad  &\hbox{for } i\in \cI, \hbox{ and }  \\
\supp \chi^i_{-}\subset [0, a_{\min}/2], \quad  \supp \chi^i_{0}\subset [a_{\min}/4,  \infty)\quad  &\hbox{for } i\in \cE.
\end{align*}
Define the auxiliary space
\begin{align*}
\cH_{aux}:=L^2(\cG_{-})\oplus L^2(\cG_{+}) \oplus L^2(\cG_{\cE}) \oplus L^2(\cG), \quad 
L^2(\cG_{\cC}):=  \bigoplus_{j\in c(\cC)} L^2(I_j;\IC), \quad c(\cC)=\begin{cases}
\cI, & \cC\in \{+,-\}, \\
\cE, & \cC=\cE.
\end{cases}
\end{align*} 
Next, define some auxiliary operators. First, for $\chi^i_{+}, \chi^i_{-}, \chi_0^i$ as above let
\begin{align*}
\chi\colon L^2(\cG,\au) \rightarrow \cH_{aux}, \quad \psi \mapsto \chi \psi,
\quad
\chi  \psi:=
\begin{bmatrix}
\{\chi_{\cE}^i \psi_i\}_{i\in \cE} \\
\{\chi_-^i \psi_i\}_{i\in \cI} \\
\{\chi_+^i \psi_i\}_{i\in \cI} \\
\{\chi_0^i \psi_i\}_{i\in \cI\cup \cE}
\end{bmatrix},
\end{align*}
where the properties of the cut-off functions translates to $\chi^\ast \circ \chi = \mathds{1}_{L^2(\cG,\au)}$.

Second, one defines identification operators for $i,j\in \cE \cup \cI$ by 
\begin{align*}
\II_{ij}\colon  L^2(I_j;\IC)  \rightarrow L^2(I_i;\IC), \quad (\II_{ij}\psi_j)(x) = \begin{cases}
\psi_j(x),&  x\in I_i\cap I_j, \\
0, & x\in I_i\setminus I_j,
\end{cases}    
\end{align*}
that is, functions on the $j$-th edge are restricted or extended by zero and interpreted as functions on the $i$-th edge. Note that in general $\II_{li}\II_{ij}\neq \II_{lj}$. Putting these identification operators together one can define for $\cR,\cS\in \{\cE, -,+\}$ operators
\begin{align*}
\II_{\cR, \cS} = \{\II_{ij}\}_{i\in c(\cR), j\in c(\cS)}\colon  L^2(\cG_{\cR}) \rightarrow  L^2(\cG_{\cS}).
\end{align*} 
For $G \in \IC^{d\times d}$ invertible for $d$ given by ~\eqref{eq:diff}, one has a block structure $G=(G_{\cR, \cS})_{\cR,\cS \in \{\cE,-,+\}}$, where $G_{\cR,\cS}= \in \IC^{|\cR|\times |\cS|}$ with $\cC\in \{\cR,\cS\}$ and $|\cC|=|\cI|$ for $\cC\in \{-,+\}$ and $|\cC|=|\cE|$ for $\cC=\cE$. Then one sets 
\begin{align*}
M_G\colon\cH_{aux} \rightarrow \cH_{aux}, \quad M_G\psi = \begin{bmatrix}
 G_{\cE,\cE} \II_{\cE,\cE} & G_{\cE, -} \II_{\cE,-} & G_{\cE, +} \II_{\cE, +} &  0 \\
G_{-,\cE} \II_{-,\cE} &  G_{--} \II_{--} & G_{-+} \II_{-+}  &  0 \\
 G_{+,\cE} \II_{+,\cE}& G_{+-} \II_{+-} & G_{++}\II_{++} &  0 \\
0 & 0 & 0 & \mathds{1} 
\end{bmatrix}
\begin{bmatrix}
\{\psi_i\}_{i\in \cE} \\
\{\psi_i\}_{i\in \cI_-} \\
\{\psi_i\}_{i\in \cI_+} \\
\{\psi_i\}_{i\in \cI\cup \cE}
\end{bmatrix}.
\end{align*}
With the change of orientation operator on  each internal edge defined by 
\begin{align*}
(I_+\psi)_i(x) = \psi_i(a_i-x) \quad \hbox{for } i\in \cI,
\end{align*}
one defines
\begin{align*}
I, J\colon \cH_{aux}\rightarrow \cH_{aux}, \quad I= 
\begin{bmatrix}
\mathds{1} & 0 & 0 & 0 \\
0 & \mathds{1} & 0 & 0 \\
0 & 0 & I_+  & 0\\
0 & 0 & 0 & \mathds{1} 
\end{bmatrix}, \quad J= 
\begin{bmatrix}
\mathds{1} & 0 & 0 & 0 \\
0 & \mathds{1} & 0 & 0 \\
0 & 0 & -\mathds{1}  & 0\\
0 & 0 & 0 & \mathds{1} 
\end{bmatrix},
\end{align*} 
which satisfy $I^\ast=I$, $I^2=\mathds{1}$, $J^\ast=J$, $J^2=\mathds{1}$, and by the chain rule 
\begin{align*}
(I\psi)' = JI(\psi')\quad  \hbox{for}\quad \psi\in H^1(\cG_{-})\oplus H^1(\cG_{+}) \oplus H^1(\cG_{\cE}) \oplus H^1(\cG).
\end{align*}

With these preparations at hand, one  considers the sequilinear form defined by
\begin{align*}
\langle \psi, \varphi \rangle_{G, \chi} 
:=
\langle  I M_G  I \chi \psi, I  M_G  I \chi \varphi \rangle_{\cH_{aux}}, \quad \psi, \varphi \in L^2(\cG,\au).
\end{align*}
\begin{lemma}\label{lemma:skp}
	For $G$ invertible and $\chi$ as above 
	$\langle \cdot, \cdot \rangle_{G, \chi}$ defines a scalar product in $L^2(\cG,\au)$,  and the induced norm is   equivalent to the norm induced by the canonical scalar product $\langle \cdot, \cdot \rangle$.
\end{lemma}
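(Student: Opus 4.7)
The plan is to establish sesquilinearity and conjugate symmetry first; these are immediate from the corresponding properties of $\langle \cdot, \cdot \rangle_{\cH_{aux}}$ together with the linearity of $\chi$, $I$, and $M_G$. Positive-definiteness will then follow a fortiori from the norm equivalence
\[
c\,\|\psi\|_{L^2(\cG,\au)}^2 \;\leq\; \|I M_G I \chi\psi\|_{\cH_{aux}}^2 \;\leq\; C\,\|\psi\|_{L^2(\cG,\au)}^2,
\]
which is the main content of the lemma.

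As a first step, the partition-of-unity identities $(\chi_\cE^i)^2 + (\chi_0^i)^2 \equiv 1$ for $i\in \cE$ and $(\chi_-^i)^2+(\chi_+^i)^2+(\chi_0^i)^2 \equiv 1$ for $i\in \cI$ yield $\chi^*\chi = \mathds{1}_{L^2(\cG,\au)}$, so $\chi$ is an isometry into $\cH_{aux}$. Combined with $I^*=I$ and $I^2=\mathds{1}$ (so that $I$ is unitary on $\cH_{aux}$), this gives $\|I\chi\psi\|_{\cH_{aux}} = \|\psi\|_{L^2(\cG,\au)}$. The norm equivalence thus reduces to showing that $\|M_G w\|_{\cH_{aux}}$ is comparable to $\|w\|_{\cH_{aux}}$ for $w$ in the range of $I\chi$.

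The key observation is a support collapse. For $\psi \in L^2(\cG,\au)$ and $w := I\chi\psi$, the first three components $w_\cE, w_-, w_+$ are all supported in $[0,a_{\min}/2]$ on each edge of their respective collections: this is immediate for $w_\cE$ and $w_-$ from the supports of $\chi_\cE^i$ and $\chi_-^i$, while for $w_+$ the operator $I_+$ maps the support $[a_i - a_{\min}/2, a_i]$ of $\chi_+^i \psi_i$ precisely onto $[0, a_{\min}/2]$. On this initial interval every edge (external or internal) fully contains $[0, a_{\min}/2]$, so every identification operator $\II_{ij}$ appearing in $M_G$ acts as the identity there. Organising the edge values into a single $\IC^d$-valued function $V(x) := (w_\cE(x), w_-(x), w_+(x))$ on $[0, a_{\min}/2]$ with $d = |\cE|+2|\cI|$, one then sees that the first three components of $M_G w$ are obtained pointwise by multiplication with the invertible matrix $G$, while $w_0$ is preserved.

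From invertibility of $G$ one has $\|G^{-1}\|^{-2}\|V(x)\|^2_{\IC^d} \leq \|G V(x)\|^2_{\IC^d} \leq \|G\|^2 \|V(x)\|^2_{\IC^d}$ for every $x$. Integrating over $[0,a_{\min}/2]$ (noting that the $L^2$-norms of $w_\cE, w_-, w_+$ coincide with the corresponding integrals thanks to the support information) and adding the unchanged contribution $\|w_0\|^2$ yields
\[
\min(\|G^{-1}\|^{-2},1)\,\|w\|_{\cH_{aux}}^2 \;\leq\; \|M_G w\|_{\cH_{aux}}^2 \;\leq\; \max(\|G\|^2,1)\,\|w\|_{\cH_{aux}}^2.
\]
Together with $\|w\|_{\cH_{aux}} = \|\psi\|_{L^2(\cG,\au)}$ this delivers the claimed two-sided estimate, whence $\langle\cdot,\cdot\rangle_{G,\chi}$ is positive-definite and its induced norm is equivalent to the canonical one. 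The principal subtlety is the support-collapse argument that reduces $M_G$ on $\Ran(I\chi)$ to pointwise multiplication by $G$; everything else is elementary linear algebra.
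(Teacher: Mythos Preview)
Your proof is correct and follows essentially the same approach as the paper. Both arguments hinge on the identical key observation: the first three components of $I\chi\psi$ are supported in $[0,a_{\min}/2]$, where every identification operator $\II_{ij}$ acts as the identity. The paper packages this as $M_{G^{-1}}M_G I\chi\psi = I\chi\psi$ to obtain the lower bound via $\|M_{G^{-1}}\|^{-1}$, whereas you phrase it as pointwise multiplication by $G$ and read off both bounds directly from the matrix norms of $G$ and $G^{-1}$; this is a cosmetic difference, and your formulation yields slightly more explicit constants.
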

\begin{proof}
	Linearity follows from the linearity of the scalar product in $\cH_{aux}$ along with the anti-linearity 
	 \begin{align*}
	 \overline{\langle \psi, \varphi \rangle}_{G, \chi} 
	 =
	\overline{ \langle   \psi,  \chi^\ast I  M_{G}^\ast M_{G}  I \chi \varphi \rangle}_{\cH_{aux}}= \langle     \chi^\ast I  M_{G}^\ast M_{G}  I \chi \varphi, \psi \rangle = \langle \varphi, \psi \rangle_{G, \chi}, \quad \psi, \varphi \in L^2(\cG,\au).
	 \end{align*}
	 To show the equivalence of norms observe first that
	 	 \begin{align*}
 \langle \psi, \psi \rangle_{G, \chi} &= 
	\langle \chi^\ast I  M_{G}^\ast M_{G}  I \chi \psi, \psi \rangle_{L^2(\cG)}  \leq \norm{\chi^\ast I  M_{G}^\ast M_{G}  I \chi}  \norm{\psi}^2_{L^2(\cG)} \quad \hbox{for }\psi  \in L^2(\cG,\au). 
	\end{align*} 
Then, note that on the range of $I\chi $ one has $\II_{li}\II_{ij}= \II_{lj}$, because $\supp (I\chi \psi)_i\subset [0,a_{\min/2}]$ for all $i\in \cI_{\pm} \cup \cE$ and hence $M_{H}M_{G}I\chi \psi = M_{HG}I\chi \psi$. In particular for $G$ invertible $$M_{G^{-1}}M_{G}I\chi \psi = I\chi \psi.$$ So, 
using that $I^\ast=I$, $I^2=\mathds{1}$, and $\chi^\ast \circ \chi = \mathds{1}_{L^2(\cG,\au)}$ one obtains for $\psi  \in L^2(\cG,\au)$
	 	 \begin{align*}
 \langle \psi, \psi \rangle_{G, \chi} = 
	 \langle M_G  I\chi \psi,  M_G I\chi \psi \rangle_{\cH_{aux}} 
	 \geq 
	 \norm{M_{G^{-1}}}^{-1}  \langle I\chi  \psi,    I\chi \psi \rangle_{\cH_{aux}} = \norm{M_{G^{-1}}}^{-1}  \langle  \psi,   \psi \rangle_{L^2(\cG)}. \quad\qedhere
	 \end{align*} 
\end{proof}

\begin{remark}\label{rem:metric}
	The relation between the standard scalar product $\langle \cdot, \cdot \rangle$ and $\langle \cdot, \cdot \rangle_{G, \chi}$ is given by 
	\begin{align*}
	\langle \psi, \varphi \rangle_{G, \chi} 
	=\langle \psi, \Theta \varphi \rangle \quad \hbox{where } \Theta= \chi^\ast I  M_{G}^\ast M_{G}  I \chi 
	\end{align*}
	and $\Theta$ is self-adjoint and by Lemma~\ref{lemma:skp} positive, and it plays the role of the metric operator.
\end{remark}
Now one can relate the operator $-\Delta(A,B)$ for quasi-sectorial boundary conditions $A,B$ to a sesquilinear form with respect to the scalar product $\langle \cdot, \cdot \rangle_{G, \chi}$. Consider 
\begin{align*}
\langle -\psi^{\prime\prime}, \varphi\rangle_{G, \chi} 
&=
\langle  -\psi^{\prime\prime}, \chi^\ast  I  M_{G}^\ast M_{G}  I \chi  \varphi \rangle_{L^2(\cG)} \\
&= \langle  \psi^{\prime}, (\chi^\ast  I  M_{G}^\ast M_{G}  I\chi \varphi)^\prime \rangle_{L^2(\cG)} +\langle  \underline{\psi}^{\prime}, \underline{\chi^\ast  I M_{G}^\ast M_{G}  I\chi \varphi} \rangle_{\cK}.
\end{align*}
Recall that $M_{H}M_{G}I\chi \psi = M_{HG}I\chi \psi$, in particular $M_{G}^\ast M_{G}I\chi \psi = M_{G^\ast G}  I\chi$. So,
\begin{align*}
 (\chi^\ast  I  M_{G^\ast G}  I\chi \varphi)^\prime &=  (\chi^\ast)^\prime  I  M_{G^\ast G}  I\chi \varphi +  \chi^\ast (I  M_{G^\ast G}  I\chi \varphi)^\prime  \\
 &=(\chi^\prime)^\ast  I  M_{G^\ast G}  I\chi \varphi +  \chi^\ast JI  M_{G^\ast G}  JI(\chi \varphi)^\prime \\
 &=(\chi^\prime)^\ast  I  M_{G^\ast G}  I\chi \varphi +  \chi^\ast JI  M_{G^\ast G}  IJ \chi^\prime \varphi +  \chi^\ast JI  M_{G^\ast G}  IJ \chi \varphi^\prime,
\end{align*} 
 and with $H:=G^\ast G$
 \begin{align*}
(\chi^\ast  I M_{G^\ast G}  I\chi \varphi) (x) =&  \{\chi_{\cE}^j(x_j)H^{ji}_{\cE,\cE}\II_{\cE,\cE}\chi_{\cE}^i\psi_i(x_i)\}_{i,j\in \cE} + \{\chi_{\cE}^j(x_j)H_{\cE,-}^{ji}\II_{\cE,-}\chi_{-}^i\psi_i(x_i)\}_{i\in \cI, j\in \cE} \\
+& 
\{\chi_{\cE}^j(x_j)H_{\cE,+}^{ji}\II_{\cE,+}\chi_{+}^i\psi_i(a_i-x_i)\}_{i\in \cI, j\in \cE} 
+\{\chi_{-}^j(x_j)H^{ji}_{-,\cE}\II_{-,\cE}\chi_{\cE}^i\psi_i(x_i)\}_{i\in \cE, j\in \cI}   \\
+& \{\chi_{-}^j(x_j)H_{--}^{ji}\II_{-,-}\chi_{-}^i\psi_i(x_i)\}_{i,j\in \cI} + 
\{\chi_{-}^j(x_j)H_{\cE,+}^{ji}\II_{\cE,+}\chi_{+}^i\psi_i(a_i-x_i)\}_{i,j\in \cI} \\
+&\{\chi_{+}^j(x_j)H^{ji}_{+,\cE}\II_{+,\cE}\chi_{\cE}^i\psi_i(a_i-x_i)\}_{i\in \cE, j\in \cI} 
+ \{\chi_{+}^j(x_j)H_{+-}^{ji}\II_{+,-}\chi_{-}^i\psi_i(a_i-x_i)\}_{i,j\in \cI} \\
+& 
\{\chi_{+}^j(x_j)H_{++}^{ji}\II_{+,+}\chi_{+}^i\psi_i(a_i-x_i)\}_{i,j\in \cI} 
+\{\chi_{0}^j(x_j)\chi_{0}^i\psi_i(x_i)\}_{i,j\in \cE\cup \cI}. 
 \end{align*}
 Hence
 \begin{align*}
\underline{\chi^\ast  I  M_{G^\ast G}  I\chi \varphi} = G^\ast G\underline{\varphi} \quad \hbox{and} \quad 
\langle  \underline{\psi}^{\prime}, \underline{\chi^\ast  I M_{G^\ast G}  I\chi \varphi} \rangle_{\cK} = 
\langle  G\underline{\psi}^{\prime},G\underline{\varphi}\rangle_{\cK}.
 \end{align*}
The goal of the whole construction is to eliminate the trace term $\underline{\psi}^{\prime}$ which appears after integration by parts. Now, recalling that by the quasi-Weierstrass normal form
\begin{align*}
\Dom(A,B)=\{\psi\in H^2(\cG)\colon (L+P)G \underline{\psi} + P^\perp G\underline{\psi}^{\prime}=0  \},
\end{align*}
where $L,P$ satisfy \eqref{eq:bc_LP}, one obtains for $\dom (\delta_{A,B}):= \{\psi \in H^1(\cG)\colon P^\perp G\underline{\psi}=0\}$ that
\begin{align*}
\langle  G\underline{\psi}^{\prime},G\underline{\varphi}\rangle_{\cK}=- \langle  LG\underline{\psi},G\underline{\varphi}\rangle_{\cK}, \quad \psi \in \Dom(\Delta(A,B)), \quad \varphi \in \dom (\delta_{A,B}),
\end{align*}
and one defines the sesquilinear form for $\psi, \varphi \in \dom (\delta_{A,B})$ by
 \begin{multline}\label{eq:deltaAB}
 \delta_{A,B}[\psi,\varphi]:=\langle M_G I J \chi \psi',   M_{G}  IJ \chi \varphi^\prime \rangle + 
 \langle M_G I \chi' \psi',   M_{G}  I\chi \varphi \rangle
+  \langle M_G J I \chi \psi',     M_{ G}  IJ \chi^\prime \varphi \rangle \\ - \langle  LG\underline{\psi},G\underline{\varphi}\rangle_{\cK}.
 \end{multline}
 Recall that a quadratic form $\delta$ in a Hilbert space $\cH$ with domain $\cV$ is of Lions type if there is  constant $C>0$ such that $|\Im \delta[\psi]|\leq C \norm{\psi}_{\cH}\norm{\psi}_{\cV}$ for all $\psi\in \cV$, cf. \cite[Section 6.2]{Mug_book} .
 \begin{lemma}\label{lemma:quasi-sectorial}
 	Let $A,B$ be quasi-sectorial boundary conditions, then the form $\delta_{A,B}$ is a closed densely defined sectorial form  of Lions type in $L^2(\cG,\au)$ with $\langle \cdot, \cdot\rangle_{G, \chi} $ as scalar product, and $-\Delta(A,B)$ is associated with $\delta_{A,B}$. 
 \end{lemma}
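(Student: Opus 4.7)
The plan is to verify the four assertions — density of the domain, closedness, sectoriality together with the Lions-type bound, and identification of the associated operator — by decomposing
$$\delta_{A,B} \;=\; \delta_0 \;+\; \delta_1 \;+\; \delta_L,$$
where $\delta_0[\psi,\varphi] := \langle M_G IJ\chi\psi',\, M_G IJ\chi\varphi'\rangle$ is the principal second-order part, $\delta_1$ collects the two cross terms in \eqref{eq:deltaAB} involving $\chi'$, and $\delta_L[\psi,\varphi] := -\langle LG\underline\psi, G\underline\varphi\rangle_{\cK}$. Density of $\dom(\delta_{A,B})$ in $L^2(\cG,\au)$, for either of the two equivalent scalar products from Lemma~\ref{lemma:skp}, is immediate since $\dom(\delta_{A,B})$ contains $H^2_0(\cG,\au)$.

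The first substantial step is to show that $\delta_0$ is a non-negative hermitian form whose graph norm on $H^1(\cG,\au)$ is equivalent to the standard one. Using $\chi^\ast\chi = \mathds{1}$, the invertibility of $M_G$ and the identity $M_{G^{-1}}M_G = \mathds{1}$ on the range of $I\chi$ (invoked already in the proof of Lemma~\ref{lemma:skp}) together with $I^2=J^2=\mathds{1}$, one obtains $\delta_0[\psi] \asymp \|\psi'\|_{L^2(\cG,\au)}^2$. Since $\dom(\delta_{A,B})$ is a closed subspace of $H^1(\cG,\au)$ (being the kernel of a continuous trace-type functional), the restriction of $\delta_0$ to $\dom(\delta_{A,B})$ is closed in the Hilbert space $(L^2(\cG,\au),\langle\cdot,\cdot\rangle_{G,\chi})$.

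The remaining pieces $\delta_1$ and $\delta_L$ are relatively form-bounded with respect to $\delta_0$ with arbitrarily small relative bound. The compact support of $\chi'$ and uniform boundedness of $M_G$ yield $|\delta_1[\psi]|\leq C\|\psi'\|_{L^2}\|\psi\|_{L^2}$, while the trace interpolation estimate $\|\underline\psi\|_{\cK}^2 \leq C\|\psi\|_{L^2}\|\psi\|_{H^1} + C\|\psi\|_{L^2}^2$ gives the same type of bound for $\delta_L$. Young's inequality then produces $|\delta_1[\psi]+\delta_L[\psi]|\leq \varepsilon\,\delta_0[\psi] + C_\varepsilon\|\psi\|_{L^2}^2$ for every $\varepsilon>0$. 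The standard perturbation theorem for sectorial forms therefore gives that $\delta_{A,B}$ is closed and sectorial; the same estimates, applied only to the imaginary part (to which the hermitian $\delta_0$ does not contribute), give the Lions-type bound $|\Im \delta_{A,B}[\psi]|\leq C\|\psi\|_{L^2}\|\psi\|_{H^1}$.

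For the identification, let $T$ be the operator associated with $\delta_{A,B}$ via the first representation theorem in the scalar product $\langle\cdot,\cdot\rangle_{G,\chi}$. The integration-by-parts derivation preceding \eqref{eq:deltaAB} shows that for every $\varphi\in\Dom(-\Delta(A,B))$ and every $\psi\in\dom(\delta_{A,B})$ one has $\delta_{A,B}[\psi,\varphi] = \langle\psi,-\varphi''\rangle_{G,\chi}$, so $-\Delta(A,B)\subset T$. Conversely, given $\varphi\in\dom(T)$, testing against $\psi\in H^2_0(\cG,\au)$ yields $T\varphi = -\varphi''$ edgewise in the distributional sense and hence $\varphi\in H^2(\cG,\au)$; running the remaining boundary term $\langle G\underline\psi',G\underline\varphi\rangle_{\cK} + \langle LG\underline\psi,G\underline\varphi\rangle_{\cK}$ over all admissible $\psi$ and invoking $A = G^{-1}(L+P)G$, $B = G^{-1}P^\perp G$ recovers the complementary Robin-type part of the boundary condition, which together with the Dirichlet-type part inherent in $\varphi\in\dom(\delta_{A,B})$ is exactly $A\underline\varphi+B\underline\varphi'=0$. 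The main obstacle is precisely this last step: tracking, through the weighted scalar product and the similarity transform $G$, how the natural boundary condition of the form combines with its domain constraint to reproduce both halves of the quasi-Weierstrass decomposition. The computation is clean only because the quasi-sectorial hypothesis forces $N_B=0$, eliminating the nilpotent obstruction in $B$ that would otherwise spoil the cancellation of the trace term $\underline{\psi}'$.
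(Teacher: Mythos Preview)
Your approach matches the paper's almost exactly: split off the principal hermitian part $\delta_0$, show its form norm is equivalent to the $H^1$-seminorm via the $M_{G^{-1}}M_G I\chi = I\chi$ identity, treat $\delta_1$ and $\delta_L$ as relatively bounded perturbations (the paper uses Agmon's inequality where you invoke a trace interpolation estimate, which is the same thing), and conclude closedness and sectoriality by Kato's perturbation theorem; the Lions bound comes from the fact that $\delta_0$ is real-valued.

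Two remarks on the identification step. First, you have the slots reversed: the paper's integration-by-parts computation gives $\delta_{A,B}[\psi,\varphi]=\langle -\psi'',\varphi\rangle_{G,\chi}$ with the \emph{first} argument in $\Dom(-\Delta(A,B))$, which is exactly what the first representation theorem needs to conclude $-\Delta(A,B)\subset T$. Your formula $\delta_{A,B}[\psi,\varphi]=\langle\psi,-\varphi''\rangle_{G,\chi}$ is the adjoint relation and does not directly yield that inclusion. Second, your reverse-inclusion argument (test against $H^2_0$ to get $\varphi\in H^2$, then read off boundary conditions) is more explicit than what the paper does, but the weighted scalar product $\langle\cdot,\cdot\rangle_{G,\chi}=\langle\cdot,\Theta\cdot\rangle$ makes the ``test against $C_c^\infty$'' step less immediate than in the unweighted case: you must move derivatives through the metric operator $\Theta$, which is only $C^1$ in the edge variables. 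The paper avoids this by simply noting that $\Dom(\Delta(A,B))$ is a form core; a cleaner alternative is to observe that $-\Delta(A,B)$ has non-empty resolvent set (regular boundary conditions, Proposition~\ref{prop:spec}) intersecting $\rho(T)$, and an inclusion between closed operators with a common resolvent point forces equality.
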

\begin{proof}
	Since $\oplus_{i\in \cI\cup \cI}C_0^\infty(I_j;\IC)\subset \dom (\delta_{A,B})$, and  $\oplus_{i\in \cI\cup \cI}C_0^\infty(I_j;\IC)\subset L^2(\cG,\au)$ is dense, also $\delta_{A,B}$ is densely defined.
	
Next, one considers the leading term $\langle M_G I J \chi \psi',   M_{G}  IJ \chi \psi^\prime \rangle$,
 and one observes that
	\begin{align*}
	\Im \langle M_G I J \chi \psi',   M_{G}  IJ \chi \psi^\prime \rangle &=0, \\
	\langle M_G I J \chi \psi',   M_{G}  IJ \chi \psi^\prime \rangle &\leq \norm{\chi^\ast JIM_{G^\ast}M_G I J \chi} \norm{\psi'}^2, \\
\langle M_G I J \chi \psi',   M_{G}  IJ \chi \psi^\prime \rangle &\geq \norm{M_{G^{-1}}}^{-1} 
\langle I J \chi \psi',   IJ \chi \psi^\prime \rangle
= \norm{M_{G^{-1}}}^{-1} \norm{\psi'}^2,
	\end{align*}
where the two estimates are analogous to the ones in the proof of Lemma~\ref{lemma:skp}. Hence this term defines a densely defined closed symmetric form on $\dom\delta_{A,B}$. The other terms will be interpreted as relatively bounded perturbations.

For the second and third term one has
	\begin{align*} 
|\Im \langle M_G I \chi' \psi',   M_{G}  I\chi \varphi \rangle| &\leq |\langle M_G I \chi' \psi',   M_{G}  I\chi \varphi \rangle|
\leq \norm{  (\chi^\prime)^\ast  I J M_{ G^\ast G}  IJ \chi}\norm{\psi'} \norm{\varphi}, \quad \hbox{and} \\
|\Im \langle M_G J I \chi \psi',     M_{ G}  IJ \chi^\prime \varphi \rangle|&\leq |\langle M_G J I \chi \psi',     M_{ G}  IJ \chi^\prime \varphi \rangle|\leq  \norm{  \chi^\ast I J M_{ G^\ast G}  IJ \chi^\prime }\norm{\psi'}\norm{\varphi}.
\end{align*}

The trace term can be estimated by Agmon's inequality in $\IR$, \ie, for $\psi_i= u + iv$ for $i\in \cI$, then 
\begin{align*}
|\psi_i(0)|^2 = u^2(0) + v^2(0) &= 2 \int_{a_i}^0 (\chi_-u)(x) (u\chi_-)'(x)   + (\chi_-v)(x) (v\chi_-)'(x) dx \\
&\leq 2\norm{\chi u} \norm{(u\chi_-)'} + 2\norm{\chi v} \norm{(v\chi_-)'} \\
&= 2\norm{\chi u} \norm{\chi_-'u + \chi u} + 2\norm{\chi v} \norm{\chi_-'v + \chi v} \\
&\leq C (\norm{u} + \norm{u'}) +  C (\norm{v} + \norm{v'}) \leq
C \norm{\psi_i}\norm{\psi_i'}. 
\end{align*}
Analogous estimates hold for $\psi_i(a_i)$ using $\chi_+$ and for $\psi_i(0)$ when $i\in \cE$ using $\chi_{\cE}$. 
Hence,
\begin{align*}
|\Im \langle  LG\underline{\psi},G\underline{\psi}\rangle_{\cK}| \leq |\langle  LG\underline{\psi},G\underline{\psi}\rangle_{\cK}| \leq \norm{G^\ast LG} \norm{\underline{\psi}}^2 \leq C\norm{G^\ast LG} \norm{\psi}\norm{\psi'}.
\end{align*}

Hence, the last three terms in \eqref{eq:deltaAB} define relatively bounded perturbations of the form defined by the first term. Therefrom, using the boundedness  of the trace operator $\psi \mapsto \underline{\psi}$, and the completeness of $H^1(\cG)$, one concludes that $\delta_{A,B}$ defines a densely defined closed form on $\dom(\delta_{A,B})$,  cf. \cite[Theorem VI.3.4]{Kato}. 
By the estimates on the imaginary part it follows that it is of Lions type.  
Moreover, the operator $-\Delta(A,B)$ is associated with $\delta_{A,B}$ by the calculation above which give that
\begin{align*}
\langle -\Delta(A,B)\psi ,  \varphi\rangle_{G, \chi} =  \delta_{A,B}[\psi,\varphi] \quad \hbox{for } \psi \in \Dom(\Delta(A,B)) \hbox{ and }\varphi \in \dom (\delta_{A,B}).
\end{align*}
In particular the assumptions of the first representation theorem \cite[Theorem VI.2.1]{Kato} are satisfied, where one verifies that $\Dom (\Delta(A,B))$ is a core of $\delta(A,B)$. 
\end{proof}

 \subsection{Spectral enclosure}\label{subsec:spec}
 The location of the spectrum is a necessary condition for an operator to be a generator of a $C_0$-semigroup. 
 \begin{proposition}[Location of the spectrum]\label{prop:spec}
 \begin{enumerate}[(a)]
 		\item If $A,B$ define regular  boundary conditions, then for any $C>0$ there exists a $c>0$ such that
 	\begin{align*}
 	\sigma(-\Delta(A,B)) \subset \{z\in \IC\colon |\Im z|+c\leq C(\Re z)\},
 	\end{align*}
 	\ie, it is contained in a sector;
 		\item If $A,B$ define quasi-sectorial boundary conditions, then 
 		\begin{align*}
 		\sigma(-\Delta(A,B)) \subset \{z\in \IC\colon c(\Im z)^2 -C\leq \Re z\} \quad \hbox{for some } C,c>0,
 		\end{align*}
 	\ie, the it is contained in a parabola around a positive half-axis.
 	\end{enumerate}
 \end{proposition}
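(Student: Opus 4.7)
The plan is to deduce both enclosures from the explicit form of the resolvent kernel $r_{A,B}(\cdot,\cdot;k^2)$ for regular boundary conditions, which can be written (as sketched in Subsection~\ref{exDS}; see also \cite{KSP2008}) in terms of the Cayley transform $\mathfrak{S}(k,A,B)$ and the plane waves $e^{\pm ikx}$. Existence of the resolvent at $\lambda = k^2$ reduces to invertibility of a finite matrix of the schematic form $\mathds{1} - M(k)$, where $M(k)$ is built from $\mathfrak{S}(k,A,B)$ and a diagonal matrix of edge propagators $e^{ika_j}$; the external-edge blocks are essentially trivial and contribute only the essential spectrum on $[0,\infty)$, which already lies in both target regions. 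The nontrivial task is to control the internal-edge contribution, and this is what forces the geometry of the two enclosures.

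For part (a), I would fix $C > 0$. For $z = k^2$ outside the sector $\{|\Im z| + c \leq C \Re z\}$, a short computation using $\Re z = (\Re k)^2 - (\Im k)^2$ and $\Im z = 2\Re k \cdot \Im k$ shows $|\Im k| \gtrsim C \sqrt{|z|}$ as $|z| \to \infty$. By Lemma~\ref{lemma:poles} and the quasi-Weierstrass normal form, $\mathfrak{S}(\cdot,A,B)$ is meromorphic with finitely many poles and grows at most polynomially as $(1+|k|)^n$. The propagator factors $e^{ika_j}$ with $\Im k > 0$ then decay at rate $e^{-|\Im k|a_{\min}} \leq e^{-c_1\sqrt{|z|}}$, which for large $|z|$ dominates the polynomial factor. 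A Neumann series argument yields invertibility of $\mathds{1} - M(k)$, and hence $\lambda \in \rho(-\Delta(A,B))$. The shift $c$ absorbs both the finitely many poles of $\mathfrak{S}$ and the bounded region in which the Neumann estimate is not yet effective.

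For part (b), the additional input from quasi-sectoriality sharpens this picture: by Lemma~\ref{lemma:Nilpotent}, $\mathfrak{S}(\cdot,A,B)$ is uniformly bounded on $\IC$ minus small neighbourhoods of its poles, so the Neumann estimate succeeds as soon as $|\Im k|$ exceeds a fixed constant $b_0 > 0$. Writing $z = (a+ib)^2$ with $a,b \in \IR$ and using $|b| \leq b_0$, one has $a^2 = (\Im z)^2/(4b^2) \geq (\Im z)^2/(4b_0^2)$, whence $\Re z = a^2 - b^2 \geq (\Im z)^2/(4b_0^2) - b_0^2$, which is precisely the claimed parabolic enclosure. A more robust alternative uses Lemma~\ref{lemma:quasi-sectorial}: the Lions-type bounds $\Re \delta_{A,B}[\psi] \geq c_1 \|\psi\|_{H^1}^2 - c_2 \|\psi\|^2$ and $|\Im \delta_{A,B}[\psi]| \leq M \|\psi\| \|\psi\|_{H^1}$ combine, after normalising $\|\psi\|_{G,\chi} = 1$, to confine the numerical range (and hence the spectrum) of $-\Delta(A,B)$ to a parabola, with the spectrum being invariant under passage from the $L^2$-inner product to the equivalent $\langle\cdot,\cdot\rangle_{G,\chi}$ by Lemma~\ref{lemma:skp}.

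The principal technical obstacle is the quantitative comparison in (a): one must verify that the exponential decay of the edge propagators strictly dominates the polynomial factor $(1+|k|)^n$ coming from a nontrivial nilpotent block $N_B$. This balance succeeds uniformly outside any cone around $\IR^+$, because the decay rate is of order $\sqrt{|z|}$ whereas the polynomial factor is only of order $|z|^{n/2}$; however, the exceptional region around $\IR^+$ where the Neumann estimate fails has width of order $\log|k|$, which is absorbed into a sector but not into a parabola. This is precisely the reason the enclosure in (a) takes the form of a sector, while the parabolic enclosure in (b) requires the stronger hypothesis of quasi-sectoriality.
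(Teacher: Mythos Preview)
Your proposal is correct and follows essentially the same route as the paper: both reduce existence of the resolvent to the Neumann-series invertibility of $\mathds{1}-\mathfrak{S}(k,A,B)T(k,\au)$, where $\|T(k,\au)\|\le e^{-(\Im k)a_{\min}}$, and then exploit that $\mathfrak{S}(\cdot,A,B)$ is at most polynomially bounded in the regular case (giving the cone-in-$k$, hence sector-in-$z$ picture) and uniformly bounded in the quasi-sectorial case by Lemma~\ref{lemma:Nilpotent} (giving the strip-in-$k$, hence parabola-in-$z$ picture). The paper additionally singles out the case $\cI=\emptyset$, where the spectrum is $[0,\infty)$ together with finitely many eigenvalues, so the enclosure is immediate; your remark that the external-edge blocks ``contribute only the essential spectrum'' covers this but should be supplemented by noting that the finitely many point eigenvalues are absorbed into the constants.

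Your alternative argument for (b) via the Lions-type estimate from Lemma~\ref{lemma:quasi-sectorial} is a genuine addition not used in the paper's proof of Proposition~\ref{prop:spec}. It is valid and in some sense more conceptual: the bounds $\Re\delta_{A,B}[\psi]\ge c_1\|\psi'\|^2-c_2\|\psi\|^2$ and $|\Im\delta_{A,B}[\psi]|\le M\|\psi\|\,\|\psi'\|$ confine the numerical range in $(L^2,\langle\cdot,\cdot\rangle_{G,\chi})$ to a parabola, and the spectrum is unchanged under passage to an equivalent inner product. The paper instead keeps the argument for (b) parallel to (a), working directly with the secular equation; this has the advantage of yielding explicit control of $\|(\mathds{1}-\mathfrak{S} T)^{-1}\|$, which is reused immediately afterwards in the resolvent estimates of Lemma~\ref{lemma:res_upperbound}.
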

 \begin{remark}
	In the context of non-self-adjoint operators, the location of the spectrum is not as significant as in the self-adjoint case, and instead the emphasis is laid on  pseudospectra which are related level sets of the norm of the resolvent. The notion of $\varepsilon$-spectra or pseudospectra goes back to Landau, see \cite{Landau1975}, and since it has been investigated intensively, see e.g \cite{Siegl_2015} and the references therein.   
\end{remark}
 \begin{proof}[Proof of Proposition~\ref{prop:spec}]
 	In the case $\cI= \emptyset$ one has $$\sigma(-\Delta(A,B))=\sigma_{ess}(-\Delta(A,B)) \cup \sigma_p(-\Delta(A,B)) \quad \hbox{where } \sigma_{ess}(-\Delta(A,B))=[0,\infty).$$  This follows from the fact that the resolvent of $-\Delta(A,B)$, given in Subsection~\ref{subsec:resolvent_reg} below, is a finite rank perturbation compared to a self-adjoint Laplacian,  see \cite[Proposition 4.11]{HKS2015} for more details. Moreover, $\sigma_p(-\Delta(A,B))$ is a finite set since the eigenvalue equation reduces to $\det (A+ik B)$ for the root $\Im k>0$, while the residual spectrum is empty, see \cite[Proposition 4.6]{HKS2015}. This already proves the claim for $\cI=\emptyset$.
 	
 	Consider now the case $\cI\neq \emptyset$.
 	By \cite[Proposition 4.7]{HKS2015},  $k^2\in \IC\setminus [0,\infty)$ is in the resolvent set, whenever for $k$ with $\Im k>0$ the matrix $A\pm ik B$ is invertible,  
 	and 
 	\begin{align}\label{eq:seceq}
 	\det (\mathds{1} - \mathfrak{S}(k;A,B)T(k,\au))\neq 0, \quad \hbox{where} \quad T(k,\au)=\begin{bmatrix}
 	0 & 0 & 0 \\
 	0 & 0 & e^{ik\au} \\
 	0 & e^{ik\au} & 0
 	\end{bmatrix}.
 	\end{align}
 	Note that 
 	\begin{align*}
 	\norm{T(k,\au)} \leq e^{- (\Im k) a_{\min}} \quad \hbox{for } k\in  \IC_{\Im >0}, \quad \hbox{where } a_{\min}:=\min_{i\in \cI} a_i. 
 	\end{align*}
 	Here, and in the following the notations
 	\begin{align*}
 	\IC_{\Im >0}&:= \{z\in \IC\colon \Im z>0\}, \quad \IC_{\Im < 0}:= \{z\in \IC\colon \Im z< 0\},  \\
 	\IC_{\Re >0}&:= \{z\in \IC\colon \Re z>0\}, \quad  \IC_{\Re < 0}:= \{z\in \IC\colon \Re z< 0\},
 	\end{align*}
 	are used.
 	By Lemma~\ref{lemma:Nilpotent} in case (a) and (b)
 	\begin{align*}
  \norm{\mathfrak{S}(k,A,B)T(k,\au)} \leq  |p(k)|e^{- (\Im k) a_{\min}}    \quad \hbox{and} \quad 
  	\norm{\mathfrak{S}(k,A,B)T(k,\au)} \leq Ce^{- (\Im k) a_{\min}},
 	\end{align*}
 	respectively for $k\in  \IC_{\Im >0}\setminus \{B_{\varepsilon}(p_1), \ldots, B_{\varepsilon}(p_l)\}$, where  $C>0$ is a constant and $p$ is polynomial in $k$ .
 	
 	In case (b) for $k\in \IC_{\Im>0}$ with $\Im k > \ln (C)/a_{\min}$, there are no zeros of the secular equation \eqref{eq:seceq}. Hence all roots of eigenvalues are located in a strip parallel to the real axis.
 	For $k=r e^{i\varphi }$ one has 
 	\begin{align*}
 	\Im k^2= r^2\sin(2\varphi) = 2r\sin(\varphi) r\cos(\varphi) \quad \hbox{and} \quad \Re k^2= r^2\cos(2\varphi) = r^2\cos^2(\varphi) - r^2\sin^2(\varphi), 
 	\end{align*}
 	and
 	hence
 	\begin{align*}
 	\{k\in \IC_{\Im >0}\colon \Im k > C \}^2 = \{z\in \IC\colon (\Im z)^2/C^2 -C^2 > \Re z.  \}
 	\end{align*} 
 	which is a parabola with vertex at $-C^2$, see Figure~\ref{fig:spectrum} (a) and (c).

 	In case (a), one has $|p(k)|\leq \alpha + \beta|k|^d$ for some $\alpha,\beta\geq 0$ if $p$ is a polynomial of degree $d\in \IN_0$, and assuming $C|\Re k| <  \Im k$ for given $C\in (0,1)$, one has 
 	\begin{align*}
 	\norm{\mathfrak{S}(k;A,B)T(k,\au)} \leq (\alpha + \beta \sqrt{C^2+1}\Im k )e^{- (\Im k) a_{\min}}.
 	\end{align*}
 	Then there exists a $c>0$ such that 
 		\begin{align*}
 	\norm{\mathfrak{S}(k;A,B)T(k,\au)} \leq (\alpha + \beta \sqrt{C^2+1}\Im k )e^{- (\Im k) a_{\min}} \leq \frac{1}{2} \quad  \hbox{for } \Im k \geq c,
 	\end{align*}
 	and hence by \eqref{eq:seceq} for any such $k$ one has that $k^2\in \rho(-\Delta(A,B))$.  So, 
 	\begin{multline*}
 	\{k\in \IC_{\Im >0}\colon C|\Re k| \leq  \Im k, \Im k >c \}^2 \subset \\ \{z\in \IC\colon (\Im z)^2/C^2 -C^2 > \Re z \} \cap \{z\in \IC\colon \Im z > |1/(1-C^2)\Re z| \}. 
 	\end{multline*}
 	That is the spectrum is contained in a sector as depicted in see Figure~\ref{fig:spectrum} (b) and (d).
 	 \end{proof}

 	\begin{figure}[h]
 		\begin{center}
 			\subfigure[$\Im k \leq C$]{
 				\begin{tikzpicture}[scale=2]
 				\filldraw [draw=white, fill=gray!20!white] (-1.2,0) -- (1.2,0) -- (1.2,0.3) -- (-1.2,0.3) -- (-1.2,0) -- cycle;
 				\draw[->] (-1.2,0) -- (1.2,0) node[right] {\tiny{$\Re k$}};
 				\draw[->] (0,-0.2) -- (0,0.5) node[above] {\tiny{$\Im k$}};
 				\draw (1,0.5) node[right] {\tiny{$\IC_{\Im >0}$}};
 				\end{tikzpicture}
 			}	
 			\subfigure[$\Im k \leq \max\{C |\Re k|, c\}$]{
 				\begin{tikzpicture}[scale=2]
 				\filldraw [draw=white, fill=gray!20!white] (-1.2,0) -- (1.2,0) -- (1.2,0.4) -- (0.3,0.1) -- (-0.3,0.1) -- (-1.2,0.4) -- (-1.2,0) -- cycle;
 				\draw[dashed, gray]  (1.2,0.4) -- (0,0) ;
 				\draw[dashed, gray]  (-1.2,0.4) -- (0,0);
 				\draw[->] (-1.2,0) -- (1.2,0) node[right] {\tiny{$\Re k$}};
 				\draw[->] (0,-0.2) -- (0,0.5) node[above] {\tiny{$\Im k$}};
 				\draw (1,0.5) node[right] {\tiny{$\IC_{\Im >0}$}};
 				\end{tikzpicture}
 			}	
 			\subfigure[$\Im k \leq C$]{
 				\begin{tikzpicture}[scale=2]
 				\fill [gray!20!white, domain=0:1.5, variable=\x]
 				(0, 0)
 				-- plot ({\x-0.3}, {0.38*sqrt(\x)})
 				-- (1.2, 0)
 				-- cycle;
 				\fill [gray!20!white, domain=0:1.5, variable=\x]
 				(0, 0)
 				-- plot ({\x-0.3}, {-0.38*sqrt(\x)})
 				-- (1.2, 0)
 				-- cycle;
 				\draw[->] (-1.2,0) -- (1.2,0) node[right] {\tiny{$\Re k^2$}};
 				\draw[->] (0,-0.2) -- (0,0.5) node[above] {\tiny{$\Im k^2$}};
 				\draw (1,0.5) node[right] {\tiny{$\IC$}};
 				\end{tikzpicture}
 			}	
 			\subfigure[$\Im k \leq \max\{C |\Re k|, c\}$]{
 				\begin{tikzpicture}[scale=2]
 				\fill [gray!20!white, domain=0:1.2, variable=\x]
 				(0, 0)
 				-- plot ({\x}, {0.4*\x})
 				-- (1.2, 0)
 				-- cycle;
 				\fill [gray!20!white, domain=0:1.2, variable=\x]
 				(0, 0)
 				-- plot ({\x}, {-0.4*\x})
 				-- (1.2, 0)
 				-- cycle;
 				\fill [gray!20!white, domain=0:1.3, variable=\x]
 				(0, 0)
 				-- plot ({\x-0.1}, {0.3*sqrt(\x)})
 				-- (1.2, 0)
 				-- cycle;
 				\fill [gray!20!white, domain=0:1.3, variable=\x]
 				(0, 0)
 				-- plot ({\x-0.1}, {-0.3*sqrt(\x)})
 				-- (1.2, 0)
 				-- cycle;
 				\draw[dashed, gray] (0,0) -- (1.2,0.48);
 				\draw[dashed, gray] (0,0) -- (1.2,-0.48);
 				\draw[gray] (-0.3,0) -- (1.2,0.6);
 				\draw[gray] (-0.3,0) -- (1.2,-0.6);
 				\draw[->] (-1.2,0) -- (1.2,0) node[right] {\tiny{$\Re k^2$}};
 				\draw[->] (0,-0.2) -- (0,0.5) node[above] {\tiny{$\Im k^2$}};
 				\draw (1,0.5) node[right] {\tiny{$\IC$}};
 				\end{tikzpicture}
 			}	
 			\caption{Gray indicates where roots of eigenvalues and eigenvalues can be located, respectively}\label{fig:spectrum}
 		\end{center}
 	\end{figure}
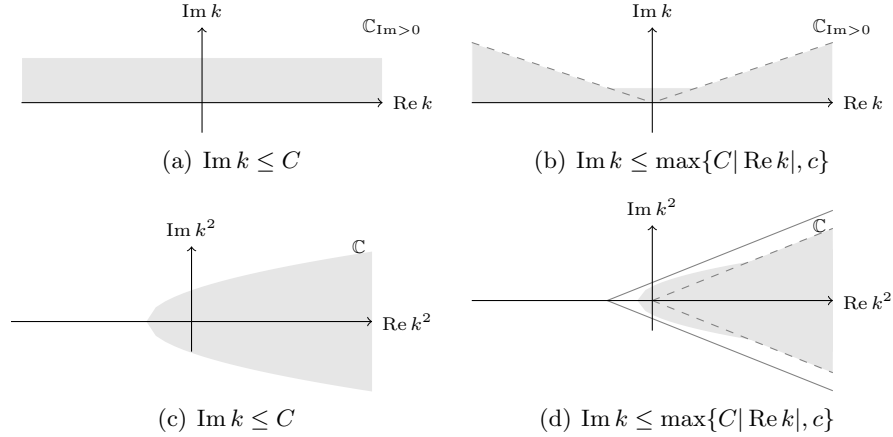

 \begin{remark}[Erratum to Lemma 4.3 in \cite{HKS2015}] To show that the resolvent set for regular boundary conditions is non-empty in \cite[Lemma 4.3]{HKS2015}, the author has  assumed implicitly that $\mathfrak{S}(i\kappa,A,B)$ is at least polynomially bounded as $\kappa\to \infty$. To justify this, one needs in fact Lemma~\ref{lemma:Nilpotent} as shown in the proof of Proposition~\ref{prop:spec}. 
 \end{remark}

 \subsection{Resolvent estimates for regular boundary condition} \label{subsec:resolvent_reg}

 The resolvent for regular boundary conditions is an integral operator with kernel  
 \begin{align*}
 &r_{\cM}(x,y;k)= r^0(x,y;k) + r^1_{\cM}(x,y;k)
 \end{align*}
 with 
 $
 \{r^0(x,y;k)\}_{j,j^{\prime}}
 = \delta_{j,j^{\prime}}\frac{i}{2k} e^{ik\abs{x_j-y_j}}
 $
 and
 \begin{align*}
 r_{\cM}^1(x,y;k) 
 =\frac{i}{2k} \Phi(x,k,\au) \left[\mathds{1}- 
 \mathfrak{S}(k,A,B) T(k;\au)\right]^{-1}\mathfrak{S}(k,A,B) 
\Phi(y,k,\au)^T,
 \end{align*}
 where
 the  $\Phi(x,k,\au)$ is given by 
 \begin{equation*}
\Phi(x,k,\au):= \begin{bmatrix} \phi_{\cE}(x,k) & 0 & 0 \\ 0 & \phi_+(x,k)& \phi_-(x,k) \end{bmatrix},
 \end{equation*}
 respectively, with diagonal matrices
 \begin{align*}
 \phi_{\cE}(x,k)=\diag\{ e^{ikx_j}\}_{j \in \cE}, \quad \phi_{+}(x,k)=\diag\{ e^{ikx_j}\}_{j\in \cI}, \quad \hbox{and} \quad \phi_{-}(x,k)=\diag\{ e^{ik(a_j-x_j)}\}_{j\in \cI},
 \end{align*}  and $\Phi(x,k)^T$ denotes the transposed of $\Phi(x,k)$, see \cite[Lemma~3.10]{KSP2008} and \cite[Proposition 4.7]{HKS2015}. 
 
 \begin{lemma}[Upper bound on the resolvent for regular boundary conditions]\label{lemma:res_upperbound}
 	Let $A,B$ define regular boundary conditions, then for $c,C$ are as in Proposition \ref{prop:spec}~(a) there exists a constant $C'>0$ such that
 	\begin{align*}
 	\norm{(-\Delta(A,B)-k^2)^{-1}} \leq C' |k|^{d-2} \quad \hbox{for all } k^2\in  \{z\in \IC\colon c(\Im z)^2 -C\leq \Re z\}.
 	\end{align*}
 \end{lemma}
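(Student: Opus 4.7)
The plan is to exploit the explicit resolvent kernel decomposition $r_{\cM} = r^0 + r^1_{\cM}$ recalled above and to estimate the two summands separately. The free summand $r^0$ is the direct sum of free half-line and interval resolvents, for which the classical pointwise bound yields operator norm of order $|k|^{-2}$ on the region of interest; this is trivially dominated by $|k|^{d-2}$ once $d \geq 2$ (the sporadic cases $d = 0,1$ being handled by inspection). The real work lies in the boundary-correction kernel $r^1_{\cM}$, which I would read as the four-fold composition
\begin{equation*}
L^2(\cG,\au) \xrightarrow{\Phi(\cdot,k,\au)^T} \cK \xrightarrow{\mathfrak{S}(k,A,B)} \cK \xrightarrow{[\mathds{1}-\mathfrak{S}(k,A,B)T(k,\au)]^{-1}} \cK \xrightarrow{\tfrac{i}{2k}\Phi(\cdot,k,\au)} L^2(\cG,\au),
\end{equation*}
where the two outer arrows are the trace and Poisson-type ``extension'' operators associated with the exponentials collected in $\Phi$.

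First I would bound the outer arrows using the elementary identity $\int_0^\infty |e^{ikx}|^2\,\mathrm{d}x = 1/(2\Im k)$ for $\Im k > 0$ together with its analogue on each compact interval $[0,a_i]$: both $\Phi(\cdot,k,\au)\colon \cK \to L^2(\cG,\au)$ and $\Phi(\cdot,k,\au)^T\colon L^2(\cG,\au) \to \cK$ then have operator norm of order $(\Im k)^{-1/2}$. For the Cayley transform I would appeal to the quasi-Weierstrass normal form of Proposition~\ref{prop:quasiweierstrass} together with the explicit expansions \eqref{eq:SN}--\eqref{eq:SL0} from the proof of Lemma~\ref{lemma:poles}, to obtain away from the finitely many poles $p_1,\ldots,p_l$ a polynomial bound $\|\mathfrak{S}(k,A,B)\| \leq C_1(1+|k|^{d-1})$, the exponent reflecting the worst-case nilpotency degree of $N_B$. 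The middle factor $[\mathds{1}-\mathfrak{S}(k,A,B)T(k,\au)]^{-1}$ is then opened into a Neumann series exactly as in the proof of Proposition~\ref{prop:spec}\,(a): the exponential decay $\|T(k,\au)\| \leq e^{-(\Im k)a_{\min}}$ dominates the polynomial growth of $\mathfrak{S}$ in the region of interest, yielding $\|\mathfrak{S}T\| \leq \tfrac12$ and hence $\|[\mathds{1}-\mathfrak{S}T]^{-1}\| \leq 2$.

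Multiplying the four estimates and using $\Im k \geq c > 0$ on the region of the statement gives
\begin{equation*}
\|R^1(k^2)\|_{L^2 \to L^2} \;\lesssim\; \frac{1}{|k|} \cdot \frac{1}{\sqrt{\Im k}} \cdot 2 \cdot (1+|k|^{d-1}) \cdot \frac{1}{\sqrt{\Im k}} \;=\; \frac{1+|k|^{d-1}}{|k|\,\Im k} \;\lesssim\; |k|^{d-2},
\end{equation*}
as claimed. The main obstacle, as I see it, is not the factor-by-factor estimate itself but the geometric verification that the region in the statement is contained in the intersection of the two regimes ``$\Im k$ uniformly positive'' (needed for the $\Phi$-factors) and ``$(1+|k|^{d-1})\,e^{-(\Im k)a_{\min}} \leq \tfrac12$'' (needed for the Neumann series), after excising small disks around the poles of $\mathfrak{S}(\cdot,A,B)$ — this excision being legitimate since those poles produce roots of the secular equation and hence correspond to points already excluded from the target set via the spectral enclosure of Proposition~\ref{prop:spec}\,(a).
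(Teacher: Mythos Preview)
Your proposal is correct and follows essentially the same route as the paper: the same kernel decomposition $r_{\cM}=r^0+r^1_{\cM}$, the same polynomial bound $\|\mathfrak{S}(k,A,B)\|\lesssim |k|^{d-1}$ from the quasi-Weierstrass normal form, the same Neumann-series control of $[\mathds{1}-\mathfrak{S}T]^{-1}$ borrowed from the proof of Proposition~\ref{prop:spec}, and the same combination of factors $\tfrac{1}{|k|}\cdot\tfrac{1}{\Im k}\cdot|k|^{d-1}$ to reach $|k|^{d-2}$. Your write-up is in fact more explicit than the paper's about the $(\Im k)^{-1/2}$ scaling of the two $\Phi$-factors and about the geometric bookkeeping needed to ensure the Neumann-series regime and the lower bound on $\Im k$ are simultaneously valid on the target region.
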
 
\begin{proof}
Since	
	\begin{align*}
	\norm{(-\Delta(A,B)-\lambda)^{-1}}= \norm{(-\Delta(A,B)-\lambda)^{-1} - (-\Delta(\mathds{1},0)-\lambda)^{-1}} + \norm{(-\Delta(\mathds{1},0)-\lambda)^{-1}}
	\end{align*}
	it is sufficient to estimate the operator norm  of the integral operator defined by $ r_{\cM}^1(\cdot,\cdot;k)$.
		From the proof of Proposition \ref{prop:spec}~(a)  one deduces that
	\begin{align*}
	\norm{(\mathds{1} -  \mathfrak{S}(k,A,B)T(k,\au))^{-1}} \leq 2 \quad \hbox{for all }  k\in \IC\setminus \{z\in \IC\colon c(\Im z)^2 -C\leq \Re z\},
	\end{align*}
	and from the proof of Lemma~\ref{lemma:Nilpotent} one obtains that for $k$ away from the poles of $\mathfrak{S}(k,A,B)$ 
	\begin{align*}
	\norm{\mathfrak{S}(k,A,B)} \leq C'' |k|^{d-1} 
	\end{align*}
	for some $C''>0$ Hence, for some $C'>0$
	\begin{align*}
	 \norm{\int_{\cG}r_{\cM}^1(x,y;k) f(y) dy} \leq \frac{1}{|k|} \frac{C' |k|^{d-1}}{\Im k}\norm{f} \leq C'|k|^{d-2} \norm{f}, \quad k \in 	\{k\in \IC_{\Im >0}\colon \Im k > C \}. 
	\end{align*}
\end{proof}

 \begin{lemma}[Lower bound on the resolvent for non-quasi-sectorial boundary conditions]\label{lemma:res}
 	Let $A,B$ define regular but not quasi-sectorial boundary conditions, then there exists a constant $C>0$
 	\begin{align*}
 	\norm{(-\Delta(A,B)+\kappa^2)^{-1}} \geq \frac{C}{\kappa} \quad \hbox{as} \quad \kappa \to \infty.
 	\end{align*}
 \end{lemma}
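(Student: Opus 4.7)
The plan is to exhibit, for each sufficiently large $\kappa > 0$, an explicit test function $f_\kappa \in L^2(\cG,\au)$ whose image under $(-\Delta(A,B)+\kappa^2)^{-1}$ has $L^2$-norm at least $C\norm{f_\kappa}/\kappa$. I will work with the kernel representation $r_\cM(x,y;i\kappa) = r^0(x,y;i\kappa) + r^1_\cM(x,y;i\kappa)$ recalled in Subsection~\ref{subsec:resolvent_reg}. The free part $r^0$ corresponds to convolution against $\frac{1}{2\kappa}e^{-\kappa|\cdot|}$ on each edge and hence has operator norm at most $1/\kappa^2$, so the lower bound must come entirely from the rank-at-most-$d$ perturbation $r^1_\cM$, whose structure is dictated by the Cayley transform.

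Since $A,B$ are regular but not quasi-sectorial, Proposition~\ref{prop:quasiweierstrass} and Definition~\ref{def:quasisec} give $N_B \neq 0$ in the quasi-Weierstrass normal form. Hence there is a vector $y_0$ in the nilpotent block with $N_By_0\neq 0$ and $N_B^2y_0=0$; pulling $(0,y_0)^T$ back through $G^{-1}$ yields $w\in\cK$ for which the block formula \eqref{eq:SN} (applied with $k=i\kappa$, so $2ik=-2\kappa$) gives $\norm{\mathfrak{S}(i\kappa,A,B)w} \geq c_0\kappa\norm{w}$ for some $c_0>0$ and all large $\kappa$. On the other hand, the bounds $\norm{T(i\kappa,\au)} \leq e^{-\kappa a_{\min}}$ (with $T\equiv 0$ if $\cI=\emptyset$) and $\norm{\mathfrak{S}(i\kappa,A,B)} \leq C\kappa^{d-1}$ from the proof of Lemma~\ref{lemma:res_upperbound} yield $\norm{\mathfrak{S}(i\kappa,A,B)T(i\kappa,\au)} \to 0$ as $\kappa\to\infty$, so $[\mathds{1}-\mathfrak{S} T]^{-1}$ differs from the identity by an exponentially small term and $\norm{M(i\kappa)w} \geq (c_0/2)\kappa\norm{w}$ for $M(i\kappa):=[\mathds{1}-\mathfrak{S} T]^{-1}\mathfrak{S}$.

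The test function is constructed edgewise to match $w$: writing $w=(w_\cE,w_-,w_+)\in\cK_\cE\oplus\cK_\cI^-\oplus\cK_\cI^+$, set
\begin{equation*}
f_{j,\kappa}(y) = w_{\cE,j}\, e^{-\kappa y} \text{ for } j\in\cE, \qquad f_{j,\kappa}(y) = w_{-,j}\,e^{-\kappa y} + w_{+,j}\,e^{-\kappa(a_j-y)} \text{ for } j\in\cI.
\end{equation*}
Elementary integrations give $\norm{f_\kappa}_{L^2}^2 = \norm{w}^2/(2\kappa) + O(e^{-\kappa a_{\min}})$, $v_\kappa := \int_\cG \Phi(y,i\kappa,\au)^T f_\kappa(y)\,dy = w/(2\kappa) + O(e^{-\kappa a_{\min}})$, and analogously $\norm{\Phi(\cdot,i\kappa,\au)u}_{L^2}^2 = \norm{u}^2/(2\kappa) + O(e^{-\kappa a_{\min}})$ for every $u\in\cK$. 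Inserting into
\begin{equation*}
r^1_\cM(\cdot,\cdot;i\kappa)f_\kappa(x) = \tfrac{1}{2\kappa}\,\Phi(x,i\kappa,\au)\,M(i\kappa)\,v_\kappa
\end{equation*}
and combining the three estimates (with $\norm{M(i\kappa)v_\kappa} \geq (c_0/4)\norm{w}$ for $\kappa$ large, by linearity and the lower bound on $\mathfrak{S}(i\kappa)w$) yields $\norm{r^1_\cM(i\kappa)f_\kappa}_{L^2} \geq c_1\norm{w}/\kappa^{3/2}$, while $\norm{f_\kappa}_{L^2}\sim\norm{w}/\sqrt{2\kappa}$. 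Hence $\norm{r^1_\cM(i\kappa)f_\kappa}/\norm{f_\kappa} \geq c_2/\kappa$, and subtracting the $O(1/\kappa^2)$ contribution of $r^0$ gives the claim.

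The main obstacle I expect is tracking the distinguished direction $w$ through the two-sided action of $\Phi(\cdot,i\kappa,\au)$ and through the factor $[\mathds{1}-\mathfrak{S} T]^{-1}$, and in particular ensuring that the two endpoint contributions on each internal edge do not conspire to cancel each other. Both issues are controlled by the exponential smallness of $T(i\kappa,\au)$ and by the quasi-orthogonality of the profiles $e^{-\kappa y}$ and $e^{-\kappa(a_j-y)}$, which together produce the uniform $O(e^{-\kappa a_{\min}})$ remainders above.
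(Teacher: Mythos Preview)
Your proof is correct and follows essentially the same approach as the paper: both exploit the explicit resolvent kernel from Subsection~\ref{subsec:resolvent_reg}, pick a direction $w$ in the nilpotent block of the quasi-Weierstrass form on which $\mathfrak{S}(i\kappa,A,B)$ grows linearly, feed in exponential test functions of the form $e^{-\kappa x}$ on each edge end, and use $\norm{\mathfrak{S}T}\to 0$ to replace $[\mathds{1}-\mathfrak{S}T]^{-1}$ by the identity up to exponentially small errors. The only notable difference is organizational: the paper estimates the bilinear pairing $\langle r^1_\cM\psi_{\underline{\alpha}},\psi_{\underline{\beta}}\rangle$ with \emph{two} test functions, choosing $\underline{\alpha}=H(i\kappa,\au)^{-1}G^{-1}v$ and $\underline{\beta}=H(i\kappa,\au)^{-1}G^{\ast}N_Bv$ so that the integrals against $\Phi^T$ hit $G^{-1}v$ and $G^{\ast}N_Bv$ exactly, whereas you use a single test function and estimate $\norm{r^1_\cM f_\kappa}$ directly via the lower bound $\norm{\Phi(\cdot,i\kappa,\au)u}_{L^2}^2\geq \norm{u}^2/(4\kappa)$. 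Your route avoids inverting the Gram matrix $H(i\kappa,\au)$ and is slightly more streamlined; the paper's duality argument is a touch more robust in that it never needs a lower bound on $\norm{\Phi u}_{L^2}$, only the exact value of the pairing. Both arrive at the same $C/\kappa$ bound by the same mechanism.
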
 
\begin{proof}
	Note first that since the Laplacian on $L^2(\IR)$ generates a contraction semigroup
	\begin{align*}
	\norm{\int_{\cG} r^0(x,y;i\kappa)\psi(y) dy} \leq \frac{1}{\kappa^2} \norm{\psi} \quad \hbox{for}\quad \kappa>0.
	\end{align*}
Then consider $r^1_{\cM}(\cdot,\cdot;i\kappa)$ together with Lemma~\ref{lemma:Nilpotent}. For
\begin{align*}
\norm{\int_{\cG} r^1_{\cM}(\cdot,y;i\kappa) \psi(y) dy} = \sup_{0\neq \varphi\in L^2(\cG,\au)}  \frac{\abs{\langle \int_{\cG} r^1_{\cM}(\cdot,y;i\kappa) \psi(y), \varphi\rangle  dy}}{\norm{\varphi}},
\end{align*}
and since $\Phi(x,i\kappa,\au)$ has only real entries for $\kappa>0$
\begin{multline*}
\langle \int_{\cG} r^1_{\cM}(\cdot,y;i\kappa) \psi(y), \varphi\rangle  dy = \\ \frac{1}{2\kappa}\langle 
 \left[\mathds{1}- 
\mathfrak{S}(i\kappa,A,B) T(i\kappa;\au)\right]^{-1}\mathfrak{S}(i\kappa,A,B) 
\int_{\cG} \Phi(y,i\kappa,\au)^T \psi(y) dy, \int_{\cG}\Phi(x,i\kappa,\au)^T\psi(x) dx \rangle_{\cK}.
\end{multline*}
Moreover since by the proof of Lemma~\ref{lemma:Nilpotent} one has $\norm{\mathfrak{S}(i\kappa,A,B)} \leq C|\kappa|^{d-1}$ as $\kappa \to \infty$, and $\norm{T(i\kappa;\au)}=e^{-\kappa a_{\min}}$,
 for any $q\in (0,1)$ there exists $c_q>0$ such that $\norm{\mathfrak{S}(i\kappa,A,B) T(i\kappa;\au)}<q$ for $\kappa\geq c_q$, and hence using Neumann series
\begin{align*}
 \left[\mathds{1}- 
\mathfrak{S}(i\kappa,A,B) T(i\kappa;\au)\right]^{-1} = \mathds{1} + \sum_{n=1}^{\infty} \left[\mathfrak{S}(i\kappa,A,B) T(i\kappa;\au)\right]^{n}.
\end{align*}
Inserting this into the above gives
\begin{align*}
&\quad\langle \int_{\cG} r^1_{\cM}(\cdot,y;i\kappa) \psi(y), \varphi\rangle  dy\\
&=  \frac{1}{2\kappa}\langle 
\mathfrak{S}(i\kappa,A,B) 
\int_{\cG} \Phi(y,i\kappa,\au)^T \psi(y) dy, \int_{\cG}\Phi(x,i\kappa,\au)^T\varphi(x) dx \rangle_{\cK} \\
&+ \frac{1}{2\kappa}\sum_{n=1}^\infty
\langle 
\left[\mathfrak{S}(i\kappa,A,B) T(i\kappa;\au)\right]^{n}\mathfrak{S}(i\kappa,A,B)
\int_{\cG} \Phi(y,i\kappa,\au)^T \psi(y) dy, \int_{\cG}\Phi(x,i\kappa,\au)^T\varphi(x) dx \rangle_{\cK}.
\end{align*}
By a similar argument as above, there exists $c_q'>0$ such that
 \begin{align*}
 \norm{\left[\mathfrak{S}(i\kappa,A,B) T(i\kappa;\au)\right]^{n}\mathfrak{S}(i\kappa,A,B)} \leq q^{n} \quad \hbox{for} \quad \kappa \geq c_q',
 \end{align*}
 and hence for $\kappa \geq \max\{c_q', c_q\}$
 \begin{align*}
&\frac{1}{2\kappa}|\sum_{n=1}^\infty
 \langle 
 \left[\mathfrak{S}(i\kappa,A,B) T(i\kappa;\au)\right]^{n}\mathfrak{S}(i\kappa,A,B)
 \int_{\cG} \Phi(y,i\kappa,\au)^T \psi(y) dy, \int_{\cG}\Phi(x,i\kappa,\au)^T\psi(x) dx \rangle_{\cK}| \\
 &\leq 
 \frac{q}{2\kappa(1-q)}  \norm{\Phi(\cdot,i\kappa,\au)}^2 \norm{\psi} \norm{\varphi} \leq \frac{C}{\kappa^2} \quad \hbox{as} \quad \kappa \to\infty
 \end{align*}
 for some constant $C>0$, where one uses that
 \begin{align*}
 \norm{\phi_{\cE}(\cdot, i\kappa)}^2= \frac{1}{2\kappa}, \quad  \norm{\phi_{+}(\cdot, i\kappa)}^2= \frac{1}{2\kappa}(1-e^{-2\kappa a_{\min}}), \quad \norm{\phi_{+}(\cdot, i\kappa)}^2= \frac{1}{2\kappa}(1-e^{-2\kappa a_{\min}}).
 \end{align*}
 So, one can now focus on the leading term
 \begin{align*}
 \frac{1}{2\kappa}\langle 
 \mathfrak{S}(i\kappa,A,B) 
 \int_{\cG} \Phi(y,i\kappa,\au)^T \psi(y) dy, \int_{\cG}\Phi(x,i\kappa,\au)^T\varphi(x) dx \rangle_{\cK}.
 \end{align*}
 Consider for $\alpha^{\cE} = \{\alpha_j^{\cE}\}_{j \in \cE}$, $\alpha^{-} = \{\alpha_j^-\}_{j \in \cI}$, $\alpha^{+} = \{\alpha_j^+\}_{j \in \cI}$ 
 \begin{align*}
\underline{\alpha} = \begin{bmatrix}\alpha^{\cE} \\ \alpha^- \\ \alpha^+
\end{bmatrix} \quad \hbox{and} \quad
 \psi_{\underline{\alpha}}(x) = \begin{bmatrix}
 \{\alpha_j^{\cE} e^{-\kappa x_j}\}_{j \in \cE} \\
 \{\alpha_j^{-} e^{-\kappa x_j}\}_{j \in \cI} +
 \{\alpha_j^{+} e^{-\kappa(a_j-x_j)}\}_{j \in \cI}
 \end{bmatrix}.
 \end{align*}
 Then $\psi_{\underline{\alpha}}\in L^2(\cG,\au)$,
 \begin{align}\label{eq:underlinealpha}
 \begin{split}
 \norm{\psi_{\underline{\alpha}}}^2 &= \frac{1}{2\kappa}\sum_{j\in \cE}|\alpha_j^{\cE}|^2 + \frac{1}{2\kappa}\sum_{j\in \cI} (1-e^{-2\kappa a_{j}})|\alpha_j^{+}|^2  + \frac{1}{2\kappa}(1-e^{-2\kappa a_{j}})|\alpha_j^{-}|^2 \\
&\qquad + \sum_{j\in \cI} a_j e^{-\kappa a_j}2 \Re \alpha_j^+\overline{\alpha_j^-},
\end{split}
 \end{align}
 and
 \begin{align*}
  \int_{\cG} \Phi(y,i\kappa,\au)^T  \psi_{\underline{\alpha}}(y) dy  = H(i\kappa,\au) \underline{\alpha}, \quad H(i\kappa,\au) = \begin{bmatrix}
  \tfrac{1}{2\kappa}\mathds{1} & 0 & 0 \\
  0 & \tfrac{1}{2\kappa}(\mathds{1}-e^{-2\kappa \au})&  \au e^{-\kappa a} \\
  0 & \au e^{-\kappa a} &  \tfrac{1}{2\kappa}(\mathds{1}-e^{-2\kappa \au})
  \end{bmatrix}.
 \end{align*}
 Now, for $N_B\neq 0$ from the quasi-Weierstrass normal form one chooses for $v\in \cK$ with $\norm{v}=1$, $N_Bv\neq 0$, $N_B^2 v=0$, and $G$ as in \eqref{eq:S_blockmatrix} for $\kappa>0$
 \begin{align*}
 \underline{\alpha}=\underline{\alpha}(\kappa)= H(i\kappa,\au)^{-1}G^{-1}v \quad \hbox{and}\quad 
 \underline{\beta}=\underline{\beta}(\kappa)= H(i\kappa,\au)^{-1}G^{\ast}N_Bv, 
 \end{align*}
 then using \eqref{eq:SN}
  \begin{multline*}
 \frac{1}{2\kappa}\langle 
 \mathfrak{S}(i\kappa,A,B) 
 \int_{\cG} \Phi(y,i\kappa,\au)^T \psi_{\underline{\alpha}}(y) dy, \int_{\cG}\Phi(x,i\kappa,\au)^T\psi_{\underline{\beta}}(x) dx \rangle_{\cK} 
 =-\frac{1}{2\kappa}\langle 
 v +2\kappa N_Bv,N_Bv \rangle_{\cK}.
 \end{multline*}
 Using that  $H(i\kappa,\au)^{-1}$ for $\kappa>0$ has the eigenvalues
 \begin{align*}
 2\kappa, \quad \left(\tfrac{1}{2\kappa}(\mathds{1}-e^{-2\kappa a_j}+ 2\kappa a_j e^{-\kappa a_j})\right)^{-1} \quad \hbox{and} \quad \left(\tfrac{1}{2\kappa}(\mathds{1}-e^{-2\kappa a_j} - 2\kappa a_j e^{-\kappa a_j})\right)^{-1} \quad \hbox{for } j \in \cI
 \end{align*}
it follows using the symmetry of $H(i\kappa,\au)$  for $\kappa$ large that
\begin{align*}
 \norm{H(i\kappa,\au)^{-1}} \simeq \kappa, \quad \hbox{and hence also} \quad  \norm{\underline{\alpha}}, \norm{\underline{\beta}} \simeq \kappa.
\end{align*}
Then for $\kappa \to \infty$ using the exponential decay of the term $a_j e^{-\kappa a_j}2 \Re \alpha_j^+\overline{\alpha_j^-}$ in \eqref{eq:underlinealpha} it follows for $\kappa$ large that 
 \begin{align*}
\norm{\psi_{\underline{\alpha}}}, \norm{\psi_{\underline{\beta}}} \simeq \tfrac{\kappa}{\sqrt{\kappa}}= \sqrt{\kappa} \quad \hbox{and} \quad  \norm{\psi_{\underline{\alpha}}}^{-1}, \norm{\psi_{\underline{\beta}}}^{-1} \simeq \sqrt{\kappa}.
\end{align*}
 Normalizing $\varphi_{\underline{\alpha}}=\psi_{\underline{\alpha}}/\norm{\psi_{\underline{\alpha}}}$ and $\varphi_{\underline{\beta}}=\psi_{\underline{\beta}}/\norm{\psi_{\underline{\beta}}}$, one obtains as $\kappa \to \infty$
 \begin{align*}
 \tfrac{1}{\kappa}
 &\lesssim 
\frac{1}{2\kappa^{2}} |\langle 
 v +2\kappa N_Bv,N_Bv \rangle_{\cK}|\\
 &\lesssim 
 \frac{1}{2\kappa}\frac{1}{\norm{\psi_{\underline{\alpha}}}\norm{\psi_{\underline{\beta}}}}|\langle 
 v +2\kappa N_Bv,N_Bv \rangle_{\cK}|\\
&=
 \frac{1}{2\kappa}|\langle 
 \mathfrak{S}(k,A,B) 
\int_{\cG} \Phi(y,k,\au)^T \varphi_{\underline{\alpha}}(y) dy, \int_{\cG}\Phi(x,k,\au)^T\varphi_{\underline{\beta}}(x) dx \rangle_{\cK}|.
 \end{align*}
In particular,  since the other terms discussed above go to zero of order $\cO(\tfrac{1}{\kappa^2})$ or faster  
 \begin{align*}
 \tfrac{1}{\kappa} + \cO(\tfrac{1}{\kappa^2})
 &\lesssim 
\abs{\langle \int_{\cG} r_{\cM}(\cdot,y;i\kappa) \varphi_{\underline{\alpha}}(y) dy, \varphi_{\underline{\beta}}\rangle} \\
 &\leq \sup_{0\neq \varphi\in L^2(\cG,\au)}  \frac{\abs{\langle \int_{\cG} r_{\cM}(\cdot,y;i\kappa) \varphi_{\underline{\alpha}}(y)dyk, \varphi\rangle  }}{\norm{\varphi}} \\
 &=  \norm{\int_{\cG} r_{\cM}(\cdot,y;i\kappa) \varphi_{\underline{\alpha}}(y) dy} 
 \leq \norm{(-\Delta(A,B)+\kappa^2)^{-1}} \quad \hbox{as } \kappa \to \infty.
 \end{align*}

\end{proof}

  \subsection{Resolvent estimates for irregular boundary condition} \label{subsec:irr}
Set as before $a_{\min}:=\min_{i\in \cI}a_i$ if $\cI\neq\emptyset$ and $a_{\min}>0$ arbitrary if $\cI=\emptyset$.
 \begin{lemma}[Lower bound on the resolvent for irregular boundary conditions]\label{lemma:res_irr}
 	Let $A,B$ be irregular boundary conditions, then
 	\begin{align*}
\frac{|e^{\Im k a_{\min}/2}-3a_{\min}|}{a_{\min}|k^2|}   \lesssim	\norm{(-\Delta(A,B)-k^2)^{-1}}  \quad \hbox{for } \Im k>0,
 	\end{align*}
 	where one uses the convention $\norm{(-\Delta(A,B)-k^2)^{-1}}=\infty$ if $k^2\in \sigma(-\Delta(A,B))$.
 \end{lemma}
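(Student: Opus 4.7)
The plan is to construct a Weyl-type quasi-mode $\psi\in\Dom(\Delta(A,B))$ for which $(-\Delta(A,B)-k^2)\psi$ is exponentially small compared to $\psi$, and then invoke
\[
\|(-\Delta(A,B)-k^2)^{-1}\|\;\geq\; \|\psi\|_{L^2}\,\big/\,\|(-\Delta(A,B)-k^2)\psi\|_{L^2};
\]
if $k^2\in\sigma(-\Delta(A,B))$ the claim holds trivially by the stated convention, so we may assume $k^2$ is in the resolvent set. Irregularity of the pencil gives $\det(A+ikB)\equiv 0$, so for every $k$ with $\Im k>0$ there is a unit vector $v=(v^{\cE},v^{-},v^{+})\in\cK_{\cE}\oplus\cK_{\cI}^{-}\oplus\cK_{\cI}^{+}$ with $(A+ikB)v=0$. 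This is the key algebraic input from irregularity.

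Next, I fix a smooth cutoff $\eta\colon[0,\infty)\to[0,1]$ with $\eta\equiv 1$ on $[0,a_{\min}/2-\delta]$, $\supp\eta\subset[0,a_{\min}/2]$, $\eta'(0)=0$, and $\|\eta^{(j)}\|_{\infty}\lesssim \delta^{-j}$ for $j=1,2$, and I define
\[
\psi_{e}(x):=v_{e}\,\eta(x)\,e^{ikx}\ \ (e\in\cE),\quad \psi_{i}(x):=v_{i}^{-}\eta(x)e^{ikx}+v_{i}^{+}\eta(a_{i}-x)e^{ik(a_{i}-x)}\ \ (i\in\cI).
\]
Because $a_{i}\geq a_{\min}>a_{\min}/2$, the two ``ends'' on each internal edge have disjoint supports, and a direct trace computation yields $\underline{\psi}=v$ and $\underline{\psi}'=ikv$, whence $A\underline{\psi}+B\underline{\psi}'=(A+ikB)v=0$, so $\psi\in\Dom(\Delta(A,B))$ with no corrector needed.

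On the plateau $\eta\equiv 1$ one has $-\psi''-k^{2}\psi=0$ exactly, so the residual is supported in the transition strip of width $\delta$ near $x=a_{\min}/2$, where $|e^{ikx}|^{2}\leq e^{-\Im k\,a_{\min}}\cdot e^{2\delta\,\Im k}$. Integrating with the bounds on $\eta',\eta''$ and optimising $\delta\simeq 1/\Im k$ yields
\[
\|(-\Delta(A,B)-k^{2})\psi\|_{L^{2}}^{2}\;\lesssim\;|k|^{3}\,e^{-\Im k\,a_{\min}}\,\|v\|^{2},
\]
whereas on the plateau
\[
\|\psi\|_{L^{2}}^{2}\;\gtrsim\;\tfrac{1-e^{-\Im k\,a_{\min}/2}}{2\,\Im k}\,\|v\|^{2},
\]
with the exponentially small cross-terms on internal edges absorbed into implicit constants.

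Combining these two bounds and using $|k|^{3/2}(\Im k)^{1/2}\lesssim |k^{2}|$ produces the resolvent lower bound of the form $\|(-\Delta(A,B)-k^{2})^{-1}\|\gtrsim e^{\Im k\,a_{\min}/2}/|k^{2}|$. A careful bookkeeping of the $O(a_{\min})$ corrections coming from the plateau length and the internal-edge cross terms then yields the stated form with the explicit subtractive constant $3a_{\min}$ inside the absolute value. The main obstacle is precisely choosing the transition width $\delta\simeq 1/\Im k$: a fixed-width cutoff gives only the weaker rate $e^{\Im k\,a_{\min}/4}$, and this $k$-dependent optimisation is essential in reaching the exponent $a_{\min}/2$ that appears in the statement.
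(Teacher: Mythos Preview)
Your quasi-mode argument is correct and yields the exponential lower bound, but it differs from the paper's proof in two linked ways. First, the paper exploits the stronger consequence of irregularity under the rank condition $\Rank(A\;B)=d$: one has $\Ker A\cap\Ker B\neq\{0\}$, so there is a \emph{$k$-independent} vector $v$ annihilated by both $A$ and $B$ simultaneously. You instead use only $\det(A+ikB)\equiv 0$, which gives a (possibly $k$-dependent) $v\in\Ker(A+ikB)$ and forces the specific trace ratio $\underline{\psi}'=ik\,\underline{\psi}$. Second, because the paper's $v$ kills $A$ and $B$ separately, \emph{any} scalar multiples of $v$ in the traces satisfy the boundary condition; this freedom lets the paper use the exact solution $\psi_a(x;k)=\tfrac{1}{k^2}(\cos(k(a-x))-1)\chi_{[0,a]}$ (with $a=a_{\min}/2$), which solves $(-\partial_x^2-k^2)\psi_a=\chi_{[0,a]}$ exactly and vanishes to first order at $x=a$. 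No cutoff, no residual estimate, and no $\delta\simeq 1/\Im k$ optimisation are needed, and the construction works uniformly for all $\Im k>0$.

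What each approach buys: the paper's exact-solution route is shorter and cleanly produces the explicit constants in the statement, while your Weyl-sequence construction is more flexible and would adapt to operators for which such an explicit inverse is unavailable. Your optimisation step, as you note, only kicks in once $\Im k\gtrsim 1/a_{\min}$; for smaller $\Im k$ you would need a separate (easy) argument with fixed $\delta$, whereas the paper's construction handles all $\Im k>0$ in one stroke.
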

 
 The following example is an essential ingredient for the proof of Lemma~\ref{lemma:res_irr} which is given below after Example~\ref{ex:emptyspec}.
 \begin{example}\label{ex:emptyspec}
 	Consider the interval $[0,a]$ and the irregular boundary conditions defined by 
 	\begin{eqnarray*}
 		A=\begin{bmatrix}1 & 0 \\ 0 & 0  \end{bmatrix}  & \mbox{and} & B=\begin{bmatrix} 0 & 0 \\ 1 & 0  \end{bmatrix}.
 	\end{eqnarray*}
 	Then $\dim\cM(A,B)=2$ and the boundary conditions correspond to
 	\begin{eqnarray*}
 		\psi(0)=0 & \mbox{and} & \psi^{\prime}(0)=0.
 	\end{eqnarray*}
 	This example is discussed in \cite[Sec.~XIX.6(b)]{DSIII} as \emph{totally degenerate boundary conditions}.
    Note that
 	\begin{align*}
 	-\Delta(A,B)= -D_0^2, \quad \hbox{where} \quad
 	D_0\psi = \psi' \quad \hbox{with} \quad \Dom(D_0)=\{\psi\in H^1([0,a])\colon \psi(0)=0\},
 	\end{align*}  
 	and $\sigma(D_0)=\emptyset$.  
 	Hence for any $k\in \IC$
 	\begin{align*}
 	(-\Delta(A,B)-k^2)^{-1} =[-(D_0-ik)(D_0+ik)]^{-1} = - (D_0+ik)^{-1}(D_0-ik)^{-1}.
 	\end{align*}
 	The resolvent for $D_0$ is given by the well-known variation of constants formula
 	\begin{align*}
 	(D_0\pm ik)^{-1}\psi(x) = \int_0^x e^{\mp ik (x-y)} \psi(y) dy, \quad k\in \IC. 
 	\end{align*}
 	Inserting for instance $\psi\equiv 1$, one obtains
 	\begin{align*}
 	(-\Delta(A,B)-k^2)^{-1}\psi= \frac{1}{k^2}\left(  \cos(kx)-1   \right), \quad k \in \IC\setminus \{0\}.
 	\end{align*}
 \end{example}

 \begin{proof}[Proof of Lemma~\ref{lemma:res_irr}]
 	Note that for $\cI=\emptyset$ one has $\rho(-\Delta(A,B))=\emptyset$, since the eigenvalue equation reduces to $\det(A+ik B)=0$ for $\Im k>0$, and hence for irregular boundary conditions $\sigma(-\Delta(A,B))=\IC$, see \cite[Section 3.4]{HKS2015}, and hence the claim follows.
 	
 	So let $\cI\neq \emptyset$.
 Then, the first observation is that the function $\psi_{a}$ defined by
\begin{align*}
\psi_{a}(x;k)=\frac{1}{k^2}\left(  \cos(k(a-x))-1   \right)\chi_{[0,a]},  \quad  \psi_{a}(\cdot;k)\in H^2([0,\infty)) \quad\hbox{for} \quad k\in \IC\setminus\{0\}, \quad a>0,
\end{align*}
because $\psi_{a}(a;k)=\psi_{a}'(a;k)=0$.
Now, let 
\begin{align*}
v\in (\Ker A \cap \Ker B)\setminus\{0\} \quad \hbox{with } v=[\{ v_j^{\cE}\}_{j\in \cE}, \{ v_j^{-}\}_{j\in \cI}, \{ v_j^{+}\}_{j\in \cI}] \hbox{ and } \norm{v}=1,
\end{align*}
and then consider the function $\psi_v\in H^2(\cG)$ defined by
 \begin{align*}
 \psi_v(x_j)=\begin{cases}
 v_j^{\cE} \psi_{a_{\min/2}}(x;k), & j\in \cE, \\
 v_j^{-} \psi_{a_{\min/2}}(x_j;k) + v_j^{+} \psi_{a_{\min/2}}(a_j-x_j;k), & j\in \cI.
 \end{cases}
 \end{align*}
This satisfies
\begin{align*}
\underline{\varphi_v} =  \psi_{a}(0;k) \cdot v \quad \hbox{and} \quad
\underline{\varphi_v}' = \psi_{a}'(0;k) \cdot v, 
\end{align*}
 and hence $\varphi_v\in \Dom(\Delta(A,B))$, and moreover it solves
 \begin{align*}
 (-\Delta(A,B)-k^2)  \psi_v = \varphi_v \quad \hbox{for} \quad 
 \varphi_v(x_j)=
 \begin{cases}
 v_j^{\cE} \chi_{[0,a_{\min}/2]}, & j\in \cE, \\
 v_j^{-} \chi_{[0,a_{\min}/2]} + v_j^{+} \chi_{[a_j-a_{\min}/2], a_j}, & j\in \cI.
 \end{cases} 
 \end{align*}
 So, if $k^2\in \sigma(-\Delta(A,B))$, then one sets $\norm{(-\Delta(A,B)-k^2)^{-1}}=\infty$ and if $k^2\in \rho(-\Delta(A,B))$, then $\psi_v=(-\Delta(A,B)-k^2)^{-1}\varphi_v$, and
 \begin{align*}
 \norm{\varphi_v}=(a_{\min}/2)\norm{v} \quad \hbox{and}\quad 
 \norm{v} \frac{|e^{\Im k a_{\min}}-3a_{\min}|}{2|k^2|}   \lesssim  \norm{\psi_v}.
 \end{align*}
 Hence the claim follows as in Example~\ref{ex:emptyspec}.
 \end{proof}

  \subsection{Proof of Theorem~\ref{thm:semigroup}} \label{subsec:proof_thm_semigroup}
  By the classifications of boundary conditions one can discuss now each possible case to show that only quasi-sectorial boundary conditions lead to the desired properties. 
  
  First, if $\dim \cM\neq d$, then $$\sigma(-\Delta(\cM))=\IC$$ according to \cite[Proposition~4.2]{HKS2015}, and hence $\Delta(\cM)$ cannot be a generator. For $A,B$ with $\dim \cM(A,B)= d$ defining irregular boundary conditions or regular boundary conditions which are not quasi-sectorial by Lemma~\ref{lemma:res_irr} 
  and Lemma~\ref{lemma:res}, respectively, one has
  \begin{align*}
  \norm{(-\Delta(A,B)+\kappa^2)^{-1}}\geq C/\kappa \quad \hbox{as} \quad \kappa \to \infty,
  \end{align*} 
  and therefore  it cannot be a generator. Now, if $A,B$ define quasi-sectorial boundary conditions, then by Lemma~\ref{lemma:quasi-sectorial}, $\Delta(A,B)$ is 
  the unique operator associated with the densely defined, closed, sectorial form $\delta_{A,B}$. Hence it generate an analytic semigroup, cf. e.g. \cite[Theorem 6.15]{Mug_book}, in $(L^2(\cG,\au), \langle \cdot,\cdot \rangle_{G,\chi})$ and by the equivalence of norms, see Lemma~\ref{lemma:skp}, it generates an analytic semigroup in $L^2(\cG,\au)$ with the standard scalar product, and 
  in particular a $C_0$-semigroup.
 Since the form is of Lions type it follows that $-\Delta(A,B)$
  is the the generator of an $C_0$-cosine operator function, cf. \cite[Theorem 6.18]{Mug_book}, and by equivalence of norms this carries over to $L^2(\cG,\au)$.
  \qed
  
 Combining Proposition~\ref{prop:spec} with Theorem~\ref{thm:semigroup} one obtains 
  \begin{corollary}\label{cor:normal}
  	If $-\Delta(A,B)$ is similar to a normal operator, then $A,B$ define quasi-sectorial boundary conditions.
  \end{corollary}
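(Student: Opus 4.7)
The plan is to prove the contrapositive: if $A,B$ do not define quasi-sectorial boundary conditions, then $T:=-\Delta(A,B)$ is not similar to any normal operator. The only input about similarity to a normal operator will be a resolvent bound. Writing $T=S^{-1}NS$ with $N$ normal and $S$ bounded invertible, the identity $\|(N-\lambda)^{-1}\|=1/\dist(\lambda,\sigma(N))$ valid for normal operators, together with $\sigma(T)=\sigma(N)$, gives
\begin{equation*}
  \|(T-\lambda)^{-1}\|\;\leq\;\frac{\|S\|\,\|S^{-1}\|}{\dist(\lambda,\sigma(T))},\qquad \lambda\in\rho(T),
\end{equation*}
together with a dual lower bound of the same shape, so $\|(T-\lambda)^{-1}\|\asymp 1/\dist(\lambda,\sigma(T))$ on $\rho(T)$.

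First I would treat the case that $A,B$ are regular but not quasi-sectorial. Proposition~\ref{prop:spec}(a) confines $\sigma(T)$ to a sector $\{z:|\Im z|+c\leq C\Re z\}$, so that $-\kappa^{2}\in\rho(T)$ with $\dist(-\kappa^{2},\sigma(T))\geq \kappa^{2}+c/C$ for all sufficiently large $\kappa>0$. The resolvent bound above then yields $\|(T+\kappa^{2})^{-1}\|\lesssim\kappa^{-2}$, which contradicts the lower bound $\|(T+\kappa^{2})^{-1}\|\gtrsim\kappa^{-1}$ furnished by Lemma~\ref{lemma:res}. This case is a direct numerical comparison between a $\kappa^{-2}$ ceiling and a $\kappa^{-1}$ floor.

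Second I would treat the case that $A,B$ are irregular. Specialising Lemma~\ref{lemma:res_irr} to $k=i\kappa$ yields $\|(T+\kappa^{2})^{-1}\|\gtrsim e^{\kappa a_{\min}/2}/\kappa^{2}$ as $\kappa\to\infty$, with the value $\infty$ when $-\kappa^{2}\in\sigma(T)$. Combined with the resolvent bound above, this forces $\dist(-\kappa^{2},\sigma(T))\lesssim \kappa^{2}e^{-\kappa a_{\min}/2}$, so $\sigma(T)$ must accumulate exponentially fast along $(-\infty,0]$. Applying Lemma~\ref{lemma:res_irr} for general $k=re^{i\theta}$ with $\theta\in(0,\pi)$, so that $\lambda=k^{2}$ sweeps the cut plane $\IC\setminus[0,\infty)$ and $\Im k=r\sin\theta$, forces analogous accumulation of $\sigma(T)$ at infinity in every direction of the cut plane. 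The two-sided comparability $\|(T-\lambda)^{-1}\|\asymp 1/\dist(\lambda,\sigma(T))$ then cannot survive on any unbounded portion of $\rho(T)$: along an escape ray in $\rho(T)$ one would have $\dist(\lambda,\sigma(T))\to\infty$ and hence $\|(T-\lambda)^{-1}\|\to 0$, while Lemma~\ref{lemma:res_irr} forces exponential growth, a contradiction.

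The main obstacle is the irregular case: the regular-but-not-quasi-sectorial case reduces to comparing a polynomial upper bound with a polynomial lower bound on the same negative axis, whereas the irregular case requires exploiting Lemma~\ref{lemma:res_irr} uniformly in $\arg k$ rather than only along the negative real axis, so as to produce an unbounded subset of $\rho(T)$ on which the resolvent decay demanded by similarity to normal is incompatible with the exponential lower bound of Lemma~\ref{lemma:res_irr}.
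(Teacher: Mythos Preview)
Your treatment of the regular but non-quasi-sectorial case is correct and is the contrapositive of the paper's argument. The paper proceeds directly: the normal-resolvent identity $\|(T-\lambda)^{-1}\|_{n}=1/\dist(\lambda,\sigma(T))$ in an equivalent norm, together with the sectorial spectral enclosure of Proposition~\ref{prop:spec}, yields the analytic-semigroup resolvent estimate, and Theorem~\ref{thm:semigroup} then forces quasi-sectoriality. You instead reach into the proof of Theorem~\ref{thm:semigroup} and cite Lemma~\ref{lemma:res} separately; this is valid but less economical, since the case analysis you perform is already packaged inside Theorem~\ref{thm:semigroup}.

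Your argument for the irregular case, however, does not close. You first deduce from Lemma~\ref{lemma:res_irr} and the normal bound that $\dist(\lambda,\sigma(T))$ is exponentially small for $\lambda$ sweeping $\IC\setminus[0,\infty)$, so that $\sigma(T)$ accumulates at infinity in every such direction. You then invoke an ``escape ray in $\rho(T)$ with $\dist(\lambda,\sigma(T))\to\infty$'' to reach a contradiction --- but your own accumulation conclusion precludes such a ray, so the step is self-defeating. Exponential resolvent growth together with a dense-at-infinity spectrum is perfectly compatible with $\|(T-\lambda)^{-1}\|\asymp 1/\dist(\lambda,\sigma(T))$; a normal operator can have $\sigma=\IC$. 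Note that the paper's short proof likewise cites only Proposition~\ref{prop:spec}, which presupposes regularity, so the irregular case is not spelled out there either; your instinct to treat it separately is sound, but the sketch you give does not supply the missing argument.
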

  \begin{proof}
  	Assume that $-\Delta(A,B)$ is similar to a normal operator, then there is an equivalent norm $\norm{\cdot}_{n}$ such that 
  	\begin{align*}
  	\norm{(-\Delta(A,B)-k^2)^{-1}}_n = \dist(k^2, \sigma(-\Delta(A,B))), \quad k^2\in \rho(-\Delta(A,B)).
  	\end{align*}
  	since $\sigma(-\Delta(A,B))$ by Proposition~\ref{prop:spec} is contained  in a sector, this implies that $-\Delta(A,B)$ -- taking into account equivalence of norms -- generates an analytic semigroup in $L^2(\cG,\au)$ which by Theorem~\ref{thm:semigroup} implies that $-\Delta(A,B)$ is quasi-sectorial. 
  \end{proof}
 \begin{remark}[Normal Laplacians are already self-adjoint]\label{rem:normalbounded}
 	A normal Laplacians in $L^2(\cG,\au)$ satisfies
 	\begin{align*}
 	\Delta(A,B)^\ast\Delta(A,B) = \Delta(A,B)\Delta(A,B)^\ast,
 	\end{align*}
 	where both operators are self-adjoint bi-Laplacians. By the extension theory for self-adjoint bi-Laplacians which is analogous to the one for Laplacians, see e.g. \cite{MugnoloGregorio_2020} and the references therein, it follows that this holds if and only if $\Dom(\Delta(A,B)^\ast)=\Dom(\Delta(A,B))$, \ie, the operator is already self-adjoint. This seems to be a general feature for extensions of closed symmetric operators.
\end{remark}

\begin{remark}
	To prove the generation of analytic semigroups and $C_0$-cosine operator function, instead of using the form $\delta_{A,B}$ constructed in Subsection~\ref{subsec:forms}, one can use also the explicit formula for the Green's function given in Subsection~\ref{subsec:resolvent_reg} to prove suitable resolvent estimates. 
\end{remark}

\subsection{Spectra on compact graphs}
If $\cG$ is compact, \ie, $\cE=\emptyset$,  then  $\sigma(-\Delta(A,B))$ is discrete if $\rho(-\Delta(A,B))\neq \emptyset$ because of the compact embedding $\Dom(-\Delta(A,B))\hookrightarrow L^2(\cG,\au)$. 
For an unbounded non-self-adjoint operator it is in general not self-evident that the resolvent set or the spectrum is non-empty. For irregular boundary conditions, there both cases can occur, compare e.g. \cite[Example 3.2 and Example 3.1]{HKS2015}, and for regular boundary conditions one has by Proposition~\ref{prop:spec} that the resolvent set is non-empty.

The \emph{spectral projection} on $\gamma$ for a compact set $\gamma\subset \sigma(-\Delta(A,B))$ is
defined by
\begin{align}\label{eq:EAB}
E_{A,B}(\gamma) =  \frac{1}{2\pi i}\int_{\Gamma} (-\Delta(A,B)-z)^{-1} dz,
\end{align}
where $\Gamma$ is a closed Jordan curve constructed such that it is
positively  oriented, and it bounds a finite
domain containing every point of $\gamma$ and no point of $\sigma(-\Delta(A,B))\setminus \gamma$. 
The range of the finite dimensional range of $E_{A,B}(\lambda_i)$ consists of the span of the eigenvectors and generalized eigenvectors to $\lambda_i\in \sigma_p(-\Delta(A,B))$.

\begin{proposition}\label{thm:riesz}
	Let $\cI=\emptyset$, \ie, $(\cG,\au)$ is compact.
	If the boundary conditions defined by $A,B$ are regular, then the set of generalized eigenvectors of $-\Delta(A,B)$ is complete in $L^2(\cG,\au)$.
\end{proposition}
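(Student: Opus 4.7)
The plan is to reduce to a classical completeness theorem of Keldysh type for a compact operator of finite order. First, by Proposition~\ref{prop:spec}(a) the spectrum of $-\Delta(A,B)$ is contained in a sector with vertex shifted along the real axis, so in particular $\rho(-\Delta(A,B)) \neq \emptyset$. Fix $\mu \in \rho(-\Delta(A,B))$ and consider $R = (-\Delta(A,B) - \mu)^{-1}$. Since $\Dom(\Delta(A,B)) \subset H^2(\cG,\au)$ and the graph is compact, the embedding $H^2(\cG,\au) \hookrightarrow L^2(\cG,\au)$ is compact, so $R$ is a compact operator on $L^2(\cG,\au)$. Since the root subspaces of $-\Delta(A,B)$ coincide (at nonzero eigenvalues) with those of $R$, it suffices to prove completeness of the root vectors of $R$.

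To apply a Keldysh-type theorem I need two pieces of information on $R$: its Schatten class and polynomial resolvent bounds along many rays. For the Schatten class, I would invoke the explicit formula for the resolvent kernel recalled in Subsection~\ref{subsec:resolvent_reg}: on the compact graph $\cG$ this kernel is bounded and piecewise continuous on $\cG \times \cG$, so $R$ is Hilbert--Schmidt; combined with the standard Weyl-type asymptotic for second-order one-dimensional operators (singular values $s_n(R) = O(n^{-2})$), one in fact obtains $R \in S_p$ for every $p > 1/2$. For the resolvent bounds, Lemma~\ref{lemma:res_upperbound} translates, via $\lambda = k^2$, into $\norm{(-\Delta(A,B) - \lambda)^{-1}} = O(|\lambda|^{(d-2)/2})$ along every ray avoiding the parabolic region containing the spectrum. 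Conjugating by $\zeta = (\lambda - \mu)^{-1}$ transports this to polynomial bounds on $(R - \zeta)^{-1}$ along a corresponding family of rays accumulating at $\zeta = 0$.

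With these ingredients I would invoke the completeness theorem of Keldysh (see, e.g., Gohberg--Krein, \emph{Introduction to the theory of linear non-selfadjoint operators}, Chap.~V, Thm.~8.1): a compact operator $T$ belonging to a Schatten class $S_p$ whose resolvent is polynomially bounded along rays that partition the plane into sectors each of opening less than $\pi/p$ has a complete system of root vectors. Since $R \in S_p$ for every $p > 1/2$, the angular separation required is essentially $2\pi$, and by Proposition~\ref{prop:spec}(a) the spectral sector can be chosen arbitrarily thin, so the available rays along which the bound from Lemma~\ref{lemma:res_upperbound} holds easily satisfy the angular hypothesis. Completeness of the root vectors of $R$, and hence of $-\Delta(A,B)$, follows.

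The main obstacle I expect is a bookkeeping issue: matching our resolvent estimate to the precise hypotheses of the quoted completeness theorem after the inversion $\zeta = (\lambda - \mu)^{-1}$, and in particular arranging that the chosen rays avoid the finitely many poles of $\mathfrak{S}(\cdot,A,B)$ that enter the Green's function. By Lemma~\ref{lemma:poles} these poles are finite in number and can be excluded in small bounded neighbourhoods without affecting rays tending to infinity. Once this is arranged, the Keldysh machinery applies cleanly and yields the completeness of the generalized eigenvectors of $-\Delta(A,B)$ in $L^2(\cG,\au)$.
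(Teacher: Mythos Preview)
Your proposal is correct and follows essentially the same route as the paper: establish that the resolvent lies in a Schatten class with $s_n = O(n^{-2})$ (the paper does this via a finite-rank perturbation argument against a self-adjoint Laplacian), obtain polynomial resolvent bounds along rays outside the spectral sector from Lemma~\ref{lemma:res_upperbound}, and then invoke a Keldysh-type completeness theorem. The only cosmetic difference is that the paper cites Agranovich's version of the criterion (requiring five rays with adjacent angles below $\pi/2$) and applies it directly to $-\Delta(A,B)$ rather than passing through the inversion $\zeta=(\lambda-\mu)^{-1}$, which spares you the bookkeeping you flag as the main obstacle.
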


\begin{proof}
	There is a family of results on the completeness of the set of root vectors of Hilbert-Schmidt operators, see for instance \cite[Corollary IX.31]{DSIII} or \cite[Theorem 2.6.2]{Locker2000} based on the decay or at least polynomial growth of the resolvent. Here, the version of \cite[Lemma 2]{Agranovich1994} is applied to $-\Delta(A,B)$ in the separable Hilbert space $L^2(\cG,\au)$.

	For $\cE=\emptyset$, the resolvent $(-\Delta(A,B)-k^2)^{-1}$ is compact for $k^2\in \rho(-\Delta(A,B))$, and the singular values of $(-\Delta(A,B)-k^2)^{-1}$ are the eigenvalues 
	\begin{align*}
	\{s_j(k^2;A,B)\}_{j\in \IN} \quad \hbox{with}\quad s_1(k^2;A,B) \geq s_2(k^2;A,B) \geq \ldots \geq 0 \quad \hbox{counting multiplicties}
	\end{align*}
	of the non-negative self-adjoint compact operator
	\begin{align*}
	|(-\Delta(A,B)-k^2)^{-1}|=(((-\Delta(A,B)-k^2)^{-1})^\ast(-\Delta(A,B)-k^2)^{-1})^{1/2} \quad \hbox{for }  k^2\in \rho(-\Delta(A,B)).
	\end{align*}
	
	\begin{lemma}\label{lemma:hilbertschmidt}
		If $\cE=\emptyset$, and  $A,B$ define regular boundary conditions, then 
		$(-\Delta(A,B)-k^2)^{-1}$ is a Hilbert-Schmidt operator for any $k^2\in \rho(-\Delta(A,B))$, and 
		\begin{align*}
		s_j(k^2;A,B)  = \cO(1/j^2) \quad \hbox{as} \quad j\to \infty.
		\end{align*}
	\end{lemma}	
	\begin{proof}
		This follows already from the fact that the integral kernel of the resolvent, see Subsection~\ref{subsec:resolvent_reg}, is continuous and for $\cE=\emptyset$ the graph is compact.
		
		Note that for self-adjoint boundary conditions $A_{sa},B_{sa}$ one can apply the classical Dirichlet-Neumann bracketing method, compare e.g. \cite[Proposition 4.2]{Bolte2009}, to obtain that the eigenvalues of $-\Delta(A_{sa},B_{sa})$ behave as $\cO(j^2)$ as $j\to \infty$, and hence the singular values for any $k\in \rho (-\Delta(A_{sa},B_{sa}))$ behave as $\cO(1/j^2)$ as $j\to \infty$.
		If $A,B$ defining regular boundary conditions, it follows by Proposition~\ref{prop:spec} that 
		for any self-adjoint boundary conditions $\rho(-\Delta(A,B))\cap \rho(-\Delta(A_{sa},B_{sa}))\neq \emptyset$. 
		For $k^2\in \rho(-\Delta(A,B))\cap \rho(-\Delta(A_{sa},B_{sa}))$ the resolvent formula from Subsection~\ref{subsec:resolvent_reg} applies, and 
		\begin{align*}
		(-\Delta(A,B)-k^2)^{-1} - (-\Delta(A_{sa},B_{sa})-k^2)^{-1}\psi = \int_{\cG} (r^1_{A,B}(\cdot,y;k)-r^1_{A_{sa},B_{sa}}(\cdot,y;k))\psi(y) dy
		\end{align*}
		is a finite rank operator with dimension of its range smaller equal to $d$ since 
		\begin{align*}
		\int_{\cG} r^1_{A,B}(\cdot,y;k)\psi(y) dy \in \hbox{span} \{\psi\in L^2(\cG,\au)\colon \psi_j=\alpha_j^+ e^{ik_\lambda x_j}+\alpha_j^-e^{ik_\lambda x_j}, \quad \alpha_j^\pm \in \IC\}.
		\end{align*}
		Therefore, compare e.g. \cite[Corollary II.2.1]{GohbergKrein}, also 
		\begin{align*}
		s_{j-d}(k^2;A_{sa},B_{sa}) \leq s_j(k^2;A,B) \leq s_{j+d}(k^2;A_{sa},B_{sa})\quad \hbox{for} \quad j\geq d
		\end{align*}
		and hence
		$s_j(k^2;A,B)  = \cO(1/j^2)$ as $j\to \infty$, and by the  first resolvent identity   this follows for all $k^2\in \rho(-\Delta(A,B))$.
	\end{proof}

	Continuing the proof of Proposition~\ref{thm:riesz}, by Lemma~\ref{lemma:hilbertschmidt} one has
	\begin{align*}
	\liminf_{j\to \infty} s_j(k^2;A,B) j^{2} >0.
	\end{align*}
	Hence, one needs 5 rays $\gamma_1, \ldots, \gamma_5$ with angles between adjacent lines being smaller than $\pi/2$ such that there exists an $N\geq -1$ such that
	\begin{align*}
	\norm{(-\Delta(A,B)-\lambda)^{-1}}=\cO(N) \quad \hbox{as} \quad \lambda\to \infty  \quad \hbox{along each ray } \gamma_j, \quad j=1, \ldots, 5.
	\end{align*}
	These can be chosen by Lemma~\ref{lemma:res} and Proposition~\ref{prop:spec} as illustrated in Figure~\ref{fig:rays}. Hence by \cite[Lemma 2]{Agranovich1994} the system of root vectors of $-\Delta(A,B)$ is complete.
\end{proof}

\begin{figure}[h]
	\begin{center}
		\begin{tikzpicture}[scale=2]
		\fill [gray!20!white]
		(-0.5, 0) -- (1.2, 0.4) -- (1.2, -0.4) -- (-0.5,0) 
		-- cycle;
		\draw[black, thick, ->] (0,0) -- (0.6,0.3);
		\draw[black, thick] (0.6,0.3) -- (1.2,0.6);
		\draw[black, thick, ->] (0,0) -- (0.6,-0.3);
		\draw[black, thick] (0.6,-0.3) -- (1.2,-0.6);
		\draw[black, thick, ->] (0,0) -- (-0.1, 0.4);
		\draw[black, thick] (-0.1,0.4) -- (-0.2, 0.8);
		\draw[black, thick, ->] (0,0) -- (-0.1, -0.4);
		\draw[black, thick] (-0.1, -0.4) -- (-0.2, -0.8);
		\draw[black, thick, ->] (0,0) -- (-0.6, 0);
		\draw[black, thick] (-0.6, 0) -- (-1.2, 0);
		\draw[->] (-1.2,0) -- (1.2,0) node[right] {\tiny{$\Re k^2$}};
		\draw[->] (0,-0.2) -- (0,0.5) node[above right] {\tiny{$\Im k^2$}};
		\draw (1,0.5) node[right] {\tiny{$\IC$}};
		\end{tikzpicture}
	\end{center}
	\caption{Five rays with angles smaller $\pi/2$}\label{fig:rays}
\end{figure}
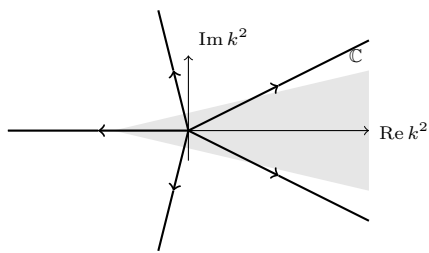    

\begin{corollary}\label{cor:nonemptyspec}
	Let $A,B$ be regular boundary conditions, then $\sigma(-\Delta(A,B))\neq \emptyset$, and $\sigma_p(-\Delta(A,B))$ is an at most countable infinite set, and for $\cE=\emptyset$, it is a countable infinite set. 
\end{corollary}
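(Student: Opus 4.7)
The plan is to split into the compact and non-compact cases, using the secular determinant from Proposition~\ref{prop:spec} in one and the completeness of generalized eigenvectors from Proposition~\ref{thm:riesz} in the other.

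First, suppose $\cE=\emptyset$, so $(\cG,\au)$ is compact. By Lemma~\ref{lemma:hilbertschmidt} the resolvent $(-\Delta(A,B)-k^2)^{-1}$ is Hilbert--Schmidt for every $k^2\in\rho(-\Delta(A,B))$, and $\rho(-\Delta(A,B))\neq\emptyset$ by Proposition~\ref{prop:spec}. Hence $\sigma(-\Delta(A,B))$ is discrete and consists of isolated eigenvalues of finite algebraic multiplicity, which guarantees that $\sigma_p(-\Delta(A,B))$ is at most countable. Since $L^2(\cG,\au)$ is infinite-dimensional, this set cannot be finite: by Proposition~\ref{thm:riesz} the span of the generalized eigenvectors is dense, but each root subspace is the finite-dimensional range of a spectral projection $E_{A,B}(\{\lambda\})$ as in \eqref{eq:EAB}, so a finite set of eigenvalues would yield a finite-dimensional total space. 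Thus $\sigma_p(-\Delta(A,B))$ is countably infinite, and in particular $\sigma(-\Delta(A,B))\neq\emptyset$.

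Now suppose $\cE\neq\emptyset$. Non-emptiness is immediate: the resolvent formula in Subsection~\ref{subsec:resolvent_reg} exhibits $(-\Delta(A,B)-k^2)^{-1}$ as a finite-rank perturbation of a self-adjoint Laplacian on the same graph (cf.\ the argument in the proof of Proposition~\ref{prop:spec}), so $\sigma_{\mathrm{ess}}(-\Delta(A,B))\supseteq[0,\infty)$ and therefore $\sigma(-\Delta(A,B))\neq\emptyset$. For the point spectrum I would argue as follows. By Proposition~\ref{prop:spec} any $k^2$ outside the sector/parabola there is in $\rho(-\Delta(A,B))$, and the eigenvalues $k^2\in\sigma_p(-\Delta(A,B))$ are exactly the values (with $\Im k>0$) at which
\[
F(k):=\det\bigl(\mathds{1}-\mathfrak{S}(k,A,B)\,T(k,\au)\bigr)
\]
vanishes, together with the (at most finitely many) poles of $\mathfrak{S}(\cdot,A,B)$ from Lemma~\ref{lemma:poles}. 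By Lemma~\ref{lemma:poles} the Cayley transform is meromorphic on $\IC$, hence $F$ is meromorphic on $\IC$, and it is not identically zero because $\rho(-\Delta(A,B))\neq\emptyset$. Consequently its zero set is discrete in $\IC$, so at most countable, and $\sigma_p(-\Delta(A,B))$ is an at most countable set.

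The only delicate point is bookkeeping in the second case: one must check that the secular determinant $F$ actually captures every eigenvalue (and is not spuriously degenerate on account of the poles of $\mathfrak{S}$), but this is provided by the resolvent representation in Subsection~\ref{subsec:resolvent_reg} together with the meromorphic structure of $\mathfrak{S}(\cdot,A,B)$ established in Lemma~\ref{lemma:poles}. Beyond that, both halves are short consequences of results already assembled above.
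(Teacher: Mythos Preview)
Your proof is correct and follows essentially the same route as the paper: completeness of root vectors (Proposition~\ref{thm:riesz}) combined with finite multiplicity forces infinitely many eigenvalues in the compact case, and the essential spectrum $[0,\infty)$ handles non-emptiness when $\cE\neq\emptyset$. The only cosmetic difference is that the paper invokes a holomorphic secular function $k\mapsto\det(Z(k;A,B,\au))$ from \cite{HKS2015} to count eigenvalues in all cases at once, whereas you work with the meromorphic $F(k)=\det(\mathds{1}-\mathfrak{S}(k,A,B)T(k,\au))$ and treat the poles of $\mathfrak{S}$ separately; the former packaging avoids your ``bookkeeping'' caveat but the content is the same.
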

\begin{proof}
	The non-zero eigenvalues correspond to the zeros of the non-constant holomorphic function $k\mapsto \det(Z(k;A,B,\au))$, see \cite[Section 4.1]{HKS2015} and Lemma~\ref{prop:spec}, and hence the set of eigenvalues is at most countable infinite. 
	For $\cE=\emptyset$, by Theorem~\ref{thm:riesz} (a) the set of generalized eigenvalues is complete. Since $L^2(\cG,\au)$ is infinite dimensional, it follows that the dimension of the space spanned by generalized eigenvectors is infinite. As the resolvent is compact, the multiplicity of each eigenvalue is finite and hence also the set of eigenvalues. Since for $\cE\neq \emptyset$ one has that $[0,\infty)\subset \sigma(-\Delta(A,B))$, see \cite[Proposition 4.11]{HKS2015}, it follows in particular that the spectrum is non-empty.
\end{proof}

Combing this corollary with the results from \cite[Section 3 and 4]{HKS2015} one can summarize the basic spectral properties of non-self-adjoint Laplacians with regular boundary conditions on finite metric  graphs in Table~\ref{Tab:summary}.
\begin{table}[h]
	\begin{tabular}{c c c c}
		$\cI=\emptyset$	& $\cE=\emptyset$ & $\cI\neq\emptyset, \cE\neq\emptyset$ & \\ \hline 
		$\sigma_r(-\Delta(A,B))=\emptyset$ & $\sigma_r(-\Delta(A,B))=\emptyset$ & $\sigma_r(-\Delta(A,B))\subset [0,\infty)$  & \\
		& & or $\sigma_r(-\Delta(A,B))=\emptyset$ & 
		\\
		$\sigma_{e}(-\Delta(A,B)) =[0,\infty)$ & $\sigma_{e}(-\Delta(A,B))=\emptyset$ & $\sigma_{e}(-\Delta(A,B)) =[0,\infty)$ & \\
		$\sigma_{p}(-\Delta(A,B))\subset \IC\setminus[0,\infty)$ & $\sigma(-\Delta(A,B))$ discrete & $\sigma_{p}(-\Delta(A,B))$ discrete & \\
		$\sigma_{p}(-\Delta(A,B))$ finite & countable infinite  & at most countable infinite & \\
		and finite multiplicity  & eigenvalues& eigenvalues \\
		& & &
	\end{tabular}
	\caption{Summary of spectral properties for regular boundary conditions}\label{Tab:summary}
\end{table}

\begin{example}
	Consider the interval $[0,1]$, and the regular, non-quasi-sectorial boundary conditions 
	\begin{eqnarray*}
		A=\begin{bmatrix}1 & 0 \\ 0 & 1  \end{bmatrix}  & \mbox{and} & B=\begin{bmatrix} 0 & 0 \\ -1 & 0  \end{bmatrix},
	\end{eqnarray*}
discussed in Subsection~\ref{exDS}.
	The spectrum of $-\Delta(A,B)$ consists only of eigenvalues of geometric multiplicity one, where each eigenvalue is a solution of 
	\begin{align*}
	\sin(k)=k, \quad k\in \IC,
	\end{align*}
	with eigenfunction $\sin(kx)$ for $k\neq 0$ and $x$ for $k=0$,
	and a direct computation shows that there are no cyclic vectors. One can ask if the set of eigenvectors forms even a Riesz basis. 
	According to \cite[Ex.~XIX.6(d)]{DSIII} the eigenvalues are located asymptotically  at  the points
	\begin{align*}
	an^2 + ib n\ln n + \ldots , \quad n\in \IN,
	\end{align*}
where the ratio  of $a$ and $b$ is real. The question when on an interval functions of the form $e^{ ikx}_{k\in \cC}$ form a Riesz basis for some set $\cC\subset \IC$ has been characterized by Pavlov in  \cite{Pavlov1979}, and  one of the conditions is that $|\Im k| \leq c$ for some $c>0$ and all $k\in \cC$. So, 
the eigenvectors to this problem are complete but do not form a Riesz basis. 
\end{example}

\begin{example}[Laplacian with nilpotent part]
	One can ask if for regular boundary conditions cyclic vectors can occur at all. To construct  a Laplacian with nilpotent part 
	let $N\in \IC^{n\times n}$ be a nilpotent matrix with
	\begin{align*}
	N^n=0 \quad \hbox{and} \quad N^k\neq 0 \quad \hbox{for } k=1, \ldots, n-1. 
	\end{align*} 
	Consider the ``pumpkin graph'' with $n$ edges each of length $a>0$ sketched in Figure~\ref{fig:punpkin_graph}. Then one has $d=2|\cI|=2n$, and one can define quasi-sectorial boundary conditions in the block form
	\begin{align*}
	A= \begin{bmatrix}
	-N & 0 \\ 0 & N 
	\end{bmatrix} \quad \hbox{and}\quad B= \begin{bmatrix}
	\mathds{1} & 0 \\ 0 & \mathds{1}
	\end{bmatrix}.
	\end{align*}
	For $v \in \IC^n$ one can define the function by
	\begin{align*}
	\varphi_i(x) = (e^{N x}v)_i, \quad i \in \cI,  \quad \hbox{where} \quad \partial_{x}^{m}\varphi= N^{m} \varphi, \quad m\in \IN_0.
	\end{align*}
	Then
	\begin{align*}
	\underline{\varphi} = \begin{bmatrix}
	v \\ \{e^{N a}v_i\}_{i\in I} 
	\end{bmatrix}\quad \hbox{and}\quad 
	\underline{\varphi}^\prime = \begin{bmatrix}
	Nv \\ -N\{e^{N a}v_i\}_{i\in I} 
	\end{bmatrix}.
	\end{align*}
	Hence $A\underline{\varphi} + B \underline{\varphi}^\prime=0$ and similarly $A\underline{\partial_{x}^{m}\varphi} + B \underline{\partial_{x}^{m}\varphi}^\prime=0$. So, 
	\begin{align*}
	\Delta(A,B)^m\varphi \in D(\Delta(A,B)) \quad \hbox{for all } m\in \IN_0, \quad \hbox{and} \quad \Delta(A,B)^{ [k/2]}\varphi = N^{2[k/2]} \varphi =0 \hbox{ for } k\geq n. 
	\end{align*}
	For $n\geq 3$, $\Delta(A,B)\varphi= N^{2}\varphi\neq 0$, so $\varphi$ defines a cyclic vector to the eigenvalue $0$.
\end{example}

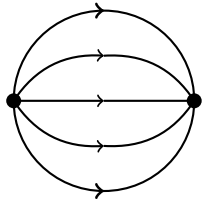
\begin{figure}[h]
	\begin{center}
		\begin{tikzpicture}[scale=0.6]
		\fill[black] (0,0) circle (1ex);
		\fill[black] (-4,0) circle (1ex);
		\draw[->, black, thick] (-4,0) -- (-2,0);
		\draw[black, thick] (-2,0) -- (0,0);
		\draw[thick, ->] (-4,0) to[bend left] (-2,1);
		\draw[thick] (-2,1) to[bend left] (0,0);
		\draw[thick, ->] (-4,0) to[bend right] (-2,-1);
		\draw[thick] (-2,-1) to[bend right] (0,0);
		\draw[<-, black, very thick] (-2,2) -- (-2.1,2);	
		\draw[thick] (0,0) arc (0:-180:2);
		\draw[thick] (0,0) arc (0:180:2);
		\draw[<-, black, very thick] (-2,-2) -- (-2.1,-2);
		\end{tikzpicture}
		\caption{``Pumpkin graph''}\label{fig:punpkin_graph}
	\end{center}
\end{figure}

\section{Similarity to self-adjoint operators}\label{sec:similarity}
The Schrödinger equation 
\begin{align*}
\partial_t\psi  + i\Delta(A,B)\psi&=0, \quad t>0,\quad \psi(0)=\psi_0, 
\end{align*}
is well-posed if $-\Delta(A,B)$ is the generator of a bounded $C_0$-group or equivalently if $-\Delta(A,B)$ is similar to a self-adjoint operator. For $\delta$-potentials on the real line such  characterizations have been pioneered by the work of Mostafazadeh \cite{Mostafazadeh2006}, and further results for point interactions have been obtained in \cite{GrodKuzhel2014} and \cite{KuzhelZnojil2017}. These results -- obtained by a variety of methods such as Krein spaces and classical criteria for quasi-self-adjointness from \cite{Naboko1984, Malamud1985, Casteren1983}   -- point into a similar direction where similarity depends on the spectrum of $L$ in the parametrization \ref{eq:bc_LP}, and the orders of some spectral singularities. In this sense the following characterization is a generalization of Mostafazadeh's criterion, where the admissible range of spectra of $L$ is illustrated in Figure~\ref{fig:similarity}. A particular class of rotational invariant boundary conditions on star graphs  has been studied in \cite{Astudillo2015}.

\begin{theorem}[Similarity on star graphs]\label{thm:similarity}
	Let $\cG$ be a star graph with only external edges, \ie, $\cI=\emptyset$. Then $-\Delta(A,B)$ is similar to a self-adjoint operator if and only if $A,B$ define quasi-sectorial boundary conditions, for the matrix $L$ in the quasi-Weierstrass normal form
	\begin{align*}
	\sigma(L) \subset \{z\in \IC\colon \Re z< 0\} \cup [0,\infty),
	\end{align*}
	and for each $l\in \sigma(L) \cap [0,\infty)$ the algebraic and geometric multiplicity agree.
\end{theorem}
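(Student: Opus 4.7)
The proof has two directions, unified by the \emph{generalized reflection principle} on star graphs: since $\cI=\emptyset$ and $L^2(\cG)=\bigoplus_{j=1}^d L^2([0,\infty);\IC)$, any invertible $G\in\mathrm{GL}(d,\IC)$ acts on $L^2(\cG)$ by $(M_G\psi)_j(x)=\sum_i G_{ji}\psi_i(x)$ as a bounded, boundedly-invertible operator, and a short computation using $\underline{M_G\psi}=G\underline{\psi}$ and $\underline{(M_G\psi)'}=G\underline{\psi'}$ gives $M_G^{-1}\Delta(A,B)M_G=\Delta(AG,BG)$. Throughout, I abbreviate $\Sigma:=\{z\in\IC:\Re z<0\}\cup[0,\infty)$.

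For \textbf{necessity}, suppose $-\Delta(A,B)=THT^{-1}$ with $H=H^{*}$ and $T$ bounded invertible. Then $-i\Delta(A,B)$ generates the bounded $C_0$-group $Te^{-itH}T^{-1}$, so $\Delta(A,B)$ in particular generates a $C_0$-semigroup; Theorem~\ref{thm:semigroup} forces $(A,B)$ to be quasi-sectorial. Since similarity preserves spectra and the Jordan structure of isolated eigenvalues, $\sigma(-\Delta(A,B))\subset\IR$ and every eigenvalue is semisimple. Applying $M_G^{-1}\,\cdot\,M_G$ with $G$ from the quasi-Weierstrass decomposition of Proposition~\ref{prop:quasiweierstrass}, the eigenvalue equation on $\IC_{\Im>0}$ reduces to $\det(L+ikB)=0$, so the point spectrum in $\IC\setminus[0,\infty)$ equals $\{-l^{2}:l\in\sigma(L),\,\Re l>0\}$ with Jordan structure inherited from $L$. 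Reality of these eigenvalues forces $\sigma(L)\cap\{\Re z>0\}\subset(0,\infty)$, semisimplicity yields the multiplicity condition, and the absence of real poles of $\mathfrak{S}(\cdot,L,\mathds{1})$ --- which by Lemma~\ref{lemma:poles} lie at $k=il$ with $l\in\sigma(iL)$, and any real pole would produce a spectral singularity embedded in $\sigma(H)=\IR$ --- forces $\sigma(L)\cap i\IR\subset\{0\}$, giving $\sigma(L)\subset\Sigma$.

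For \textbf{sufficiency}, choose $G$ from Proposition~\ref{prop:quasiweierstrass}: since left multiplication by the invertible factor $F$ yields equivalent boundary conditions and $N_B=0$ by quasi-sectoriality, the similarity by $M_G$ reduces to $A=\mathrm{diag}(L,\mathds{1}_{d-m})$, $B=\mathrm{diag}(\mathds{1}_m,0)$. The operator splits as $-\Delta(A,B)=(-\Delta_L)\oplus(-\Delta_{\mathrm{Dir}})$, the Dirichlet summand being self-adjoint, so it suffices to deal with $-\Delta_L$. A second application of the reflection principle, with the matrix that puts $L$ into Jordan normal form, decomposes $-\Delta_L$ as a direct sum over Jordan blocks $J_l=l\mathds{1}_r+N$. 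If $l\in[0,\infty)$ the multiplicity hypothesis forces $N=0$ and the block decouples into $r$ scalar self-adjoint half-line Robin operators. If $\Re l<0$, the scalar Robin operator $-\Delta_l$ on $[0,\infty)$ has no $L^2$-eigenfunction, spectrum $[0,\infty)$, and its Cayley transform $\mathfrak{S}(\cdot,l,1)=(l-i\,\cdot\,)^{-1}(l+i\,\cdot\,)$ is holomorphic, unimodular, and uniformly bounded on $\IR$ with no real pole; the Mostafazadeh / Grod--Kuzhel construction \cite{Mostafazadeh2006, GrodKuzhel2014} (via the half-line Fourier-cosine transform composed with multiplication by a bounded boundary-value square root of $\mathfrak{S}$) produces a bounded invertible $T_l$ with $T_l(-\Delta_l)T_l^{-1}=-\Delta_0$. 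For a non-trivial Jordan block with $\Re l<0$, I assemble an operator-valued intertwiner on the $r$-edge subsystem by Neumann-series deformation of $T_l^{\oplus r}$; the series terminates after $r$ steps by nilpotency of $N$, and the uniform bound on $\mathfrak{S}(k,J_l,\mathds{1})$ away from its unique pole $k=il\in\IC_{\Im<0}$, provided by Lemma~\ref{lemma:Nilpotent}(b) and the explicit formula \eqref{eq:S_NL}, keeps the deformation bounded. Composing all similarities gives a bounded invertible operator conjugating $-\Delta(A,B)$ to a self-adjoint operator.

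The principal obstacle is the last step: a non-trivial Jordan block of $L$ at $\Re l<0$ cannot be diagonalized by any matrix similarity, so the similarity to self-adjointness must be realized through an operator-valued intertwiner on the half-line subsystem, and controlling the polynomial-in-$x$ contributions introduced by the nilpotent part is delicate. It is precisely here that the pole-location and growth information on the Cayley transform furnished by Lemma~\ref{lemma:poles} and Lemma~\ref{lemma:Nilpotent} are essential, together with the fact that under the spectral hypothesis $\sigma(L)\subset\Sigma$ the poles of $\mathfrak{S}(\cdot,L,\mathds{1})$ avoid $\IR\setminus\{0\}$ entirely, with any pole at $0$ controlled by the multiplicity condition.
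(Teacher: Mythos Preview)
Your necessity argument has two soft spots. First, the claim that a real pole of $\mathfrak{S}(\cdot,L,\mathds{1})$ ``would produce a spectral singularity'' is not a proof; you must show concretely that such a pole forces the resolvent estimate $\varepsilon\|( -\Delta(A,B)-\lambda\pm i\varepsilon)^{-1}\|\leq C$ to fail. The paper does this by exhibiting explicit test functions $\psi_{\underline{\alpha}}$ built from $e^{ikx}$ and showing that the comparison with a self-adjoint reference Laplacian blows up as $\Im k\to 0$ along a suitable ray. Second, your handling of the multiplicity condition at $l=0$ is incomplete: $0$ is not an isolated eigenvalue of $-\Delta(A,B)$ (the candidate eigenfunction is constant and not in $L^2$), so the ``semisimplicity of eigenvalues'' argument says nothing there. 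The paper treats a Jordan block at any $l\in[0,\infty)$, including $l=0$, by the same direct resolvent-blowup computation, using formula~\eqref{eq:S_NL}/\eqref{eq:S_N0} for the Cayley transform on a cyclic vector.

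The serious gap is in sufficiency. Your ``Neumann-series deformation of $T_l^{\oplus r}$'' for a Jordan block $J_l=l\mathds{1}+N$ with $\Re l<0$ is never defined: what operator is being expanded, in what topology, and why does nilpotency of $N$ (a matrix acting on boundary data) make a series of \emph{unbounded} half-line operators terminate boundedly? The references you cite treat the scalar/diagonalizable case only. The paper bypasses this construction entirely. It applies the van Casteren--Naboko--Malamud criterion (Theorem~\ref{thm:naboko}): after the reflection-principle reduction, one compares $(-\Delta(A',B')-z)^{-1}$ with a self-adjoint reference $(-\Delta(A_{sa},B_{sa})-z)^{-1}$, and the difference is a rank-$d$ operator governed by $\mathfrak{S}(k,L_{\Re<0},\mathds{1})$. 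Since all poles of this Cayley transform lie in $\IC_{\Im<0}$, it is uniformly bounded on $\IC_{\Im>0}$ regardless of Jordan structure; the required integral estimate then follows by bootstrapping from the known estimate for the pair $-\Delta(\mathds{1},0)$, $-\Delta(0,\mathds{1})$. This argument treats Jordan blocks at $\Re l<0$ uniformly and avoids any explicit intertwiner.
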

The proof of this theorem is given in Subsection~\ref{subsec:proof_thm_similarity} below.

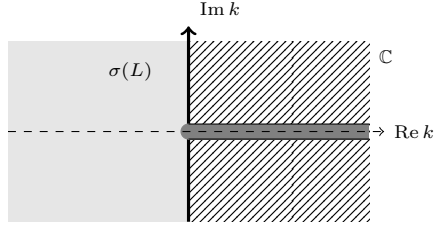
\begin{figure}[h]
	\begin{center}
		\begin{tikzpicture}[scale=2]
		\fill[gray!20!white]
		(0, -0.6) -- (-1.2, -0.6) -- (-1.2, 0.6) -- (0, 0.6) -- (0, -0.6)
		-- cycle;
		\pattern[pattern=north east lines]
		(0, -0.6) -- (1.2, -0.6) -- (1.2, 0.6) -- (0, 0.6) -- (0, -0.6)
		-- cycle;
		\draw[->, very thick] (0,-0.6) -- (0,0.7) node[above right] {\tiny{$\Im k$}};
		\draw (1.2,0.5) node[right] {\tiny{$\IC$}};
		\draw (-0.6,0.4) node[right] {\tiny{$\sigma(L)$}};
		\draw[gray, fill=gray] (0,0) circle (0.05);
		\fill [gray]
		(0, -0.05) -- (0, 0.05) -- (1.2, 0.05) -- (1.2, -0.05) -- (0, -0.05)
		-- cycle;
		\draw [thin]
		(0, -0.05) -- (1.2, -0.05);
		\draw [thin]
		(0, 0.05) -- (1.2, 0.05);
		\draw[dashed, ->] (-1.2,0) -- (1.3,0) node[right] {\tiny{$\Re k$}};
		\end{tikzpicture}
	\end{center}
	\caption{Location of $\sigma(L)$ for quasi-sectorial Laplacians similar to self-adjoint operators in grey and dark grey where conditions on the multiplicity are required}\label{fig:similarity}
\end{figure}

\begin{example}[Complex $\delta$-interaction]\label{ex:Cdelta}
	Consider a graph with $\cI=\emptyset$ and $\abs{\cE}\geq 2$. Assume that the boundary conditions are defined  by 
	\begin{align*}
	A= \left[
	\begin{array}{cccccc}
	1 & -1 & 0 &\cdots & 0 & 0 \\
	0 & 1 & -1 &\cdots & 0 & 0  \\
	0 & 0 & 1 &\cdots & 0 & 0  \\
	\vdots &\vdots  & \vdots & & \vdots  & \vdots \\
	0 & 0 & 0 &\cdots & 1 & -1  \\
	-\gamma & 0 & 0 &\cdots & 0 & 0
	\end{array}
	\right] \quad \hbox{and}\quad B= \left[
	\begin{array}{cccccc}
	0 & 0 & 0 &\cdots & 0 & 0 \\
	0 & 0 & 0 &\cdots & 0 & 0  \\
	0 & 0 & 0 &\cdots & 0 & 0  \\
	\vdots &\vdots  & \vdots & & \vdots  & \vdots \\
	0 & 0 & 0 &\cdots & 0 & 0  \\
	1 & 1 & 1 &\cdots & 1 & 1 
	\end{array}
	\right], \quad \gamma\in \IC.
	\end{align*}  
	One can represent the boundary conditions by equivalent boundary conditions of the form ~\eqref{eq:bc_LP} with $P=\mathds{1}-P^{\perp}$, where $P^{\perp}$ is the rank one projector onto $(\Ker B)^{\perp}$, and $L= -\frac{\gamma}{\abs{\cE}} P^{\perp}$, 
	\cf~\cite[Section~3.2.1]{Kuchment2004} for the case of real $\gamma$. 
	Theorem~\ref{thm:similarity} translates the result from \cite{Mostafazadeh2006}, see also \cite[Example III]{GrodKuzhel2014}, for the case $|\cE|=2$ to $|\cE|=d$, that is, $-\Delta(A,B)$ is similar to a self-adjoint operator if and only if 
	\begin{align*}
	\gamma \in \{z\in \IC\colon \Re z>0\} \cup (-\infty,0].
	\end{align*}
\end{example}

\begin{example}[Complex $\delta'$-interactions]\label{Cdeltaprime}
	As in Example~\ref{ex:Cdelta} consider a graph with $\cI=\emptyset$ and $\abs{\cE}\geq 2$, and now interchanging the boundary conditions, one considers
	\begin{align*}
	A=\left[
	\begin{array}{cccccc}
	0 & 0 & 0 &\cdots & 0 & 0 \\
	0 & 0 & 0 &\cdots & 0 & 0  \\
	0 & 0 & 0 &\cdots & 0 & 0  \\
	\vdots &\vdots  & \vdots & & \vdots  & \vdots \\
	0 & 0 & 0 &\cdots & 0 & 0  \\
	1 & 1 & 1 &\cdots & 1 & 1 
	\end{array}
	\right]\quad \hbox{and}\quad B= \left[
	\begin{array}{cccccc}
	1 & -1 & 0 &\cdots & 0 & 0 \\
	0 & 1 & -1 &\cdots & 0 & 0  \\
	0 & 0 & 1 &\cdots & 0 & 0  \\
	\vdots &\vdots  & \vdots & & \vdots  & \vdots \\
	0 & 0 & 0 &\cdots & 1 & -1  \\
	-\gamma & 0 & 0 &\cdots & 0 & 0
	\end{array}
	\right], \quad \gamma\in \IC,
	\end{align*}  
    compare \cite[Section 3.2.3]{Kuchment2004} for the case of real $\gamma$, and there are further $\delta^\prime$-type boundary conditions discussed for instance in \cite{Manko2010, Manko2015}.
	Equivalent boundary conditions are defined by $A=P_{\delta}^\perp$ and $B=L_{\delta} + P_{\delta}$, where $P_{\delta}=\mathds{1}-P_{\delta}^{\perp}$ with $P_{\delta}^{\perp}$ being the rank one projector from Example~\ref{ex:Cdelta}, and $L_{\delta}= -\frac{\gamma}{\abs{\cE}} P_{\delta}^{\perp}$. Hence, equivalent boundary conditions are 
	\begin{align*}
	A_{\delta'}= -\frac{\abs{\cE}}{\gamma} P_{\delta}^{\perp}  \quad \hbox{and} \quad B_{\delta'} = \mathds{1},  
	\end{align*}
	and therefore $L_{\delta'}= -\frac{\abs{\cE}}{\gamma} P_{\delta}^{\perp}$. 
	Theorem~\ref{thm:similarity} states that $-\Delta(A,B)$ is similar to a self-adjoint operator if and only if 
	\begin{align*}
	\gamma \in \{z\in \IC\colon \Re z>0\} \cup (-\infty,0],
	\end{align*}
	because $\gamma$ is in this region in $\IC$ if and only if $1/\gamma$ lies there.

	In \cite{GrodKuzhel2014} the couplings on the real line with
	\begin{align}\label{eq:Kuzhel}
	\frac{1}{2}\begin{bmatrix}
	a & b \\ c & d
	\end{bmatrix}
	\begin{bmatrix}
	\psi(0+) + \psi(0-) \\
	-\psi'(0+) - \psi'(0-)
	\end{bmatrix} =
		\begin{bmatrix}
		\psi'(0+) - \psi'(0-)\\
	\psi(0+) - \psi(0-)
	\end{bmatrix}, \quad a,b,c,d \in \IC,
	\end{align}
	are considered, see \cite[Lemma 2.1]{GrodKuzhel2014}. Identify $\IR$ with a graph with $\cI=\emptyset$ and $\cE=\{e_1, e_2\}$ with $e_1$ taking the role of $[0,\infty)$ and $e_2$  the one of $(-\infty,0]$ (where both $e_1,e_2$ are identified with $[0,\infty)$). Then $\psi(0+)=\psi_1(0)$, $\psi(0-)=\psi_2(0)$, and $\psi'(0+)=\psi_1'(0)$, $\psi'(0-)=-\psi_2'(0)$, and the coupling \eqref{eq:Kuzhel} translates to
	\begin{align*}
	A=\begin{bmatrix}
	\tfrac{a}{2} & \tfrac{a}{2} \\ \tfrac{c}{2}-1 & \tfrac{c}{2}+1
	\end{bmatrix}
	\quad \hbox{and}\quad 	B=\begin{bmatrix}
	-\tfrac{b}{2}-1 & \tfrac{b}{2}-1 \\ -\tfrac{d}{2} & \tfrac{d}{2}
	\end{bmatrix}.
	\end{align*}  
	   In particular the case of $\delta'$-type conditions for $a=b=c=0$ and $d\neq 0$ translates to
	\begin{align*}
	A=\begin{bmatrix}
0 & 0 \\ -1 & 1
\end{bmatrix}, 	B=\begin{bmatrix}
-1 & -1 \\ -\tfrac{d}{2} & \tfrac{d}{2}
\end{bmatrix}
	\quad \hbox{and equivalently} \quad
A'=\frac{1}{d}\begin{bmatrix}
1 & -1 \\ -1 & 1
\end{bmatrix}, 
B'=\begin{bmatrix}
1 & 0 \\ 0 & 1
\end{bmatrix}
\end{align*}
which is $A'= \frac{\sqrt{2}}{d} P_{\delta}^{\perp}$ and $B'= \mathds{1}$. This is
similar to a self-adjoint operator if and only if 
\begin{align*}
d \in \{z\in \IC\colon \Re z<0\} \cup [0,\infty)
\end{align*}
which is consistent with \cite[Example IV]{GrodKuzhel2014}, and the case $d=0$ is included since it defines the  self-adjoint Laplacian on $\IR$.
\end{example}

\subsection{Geometry of a graph and similarity transforms}\label{subsec:geometry}
Semigroup generation, compare Theorem~\ref{thm:semigroup},  such as self-adjointness, see e.g. \cite{KS1999}, can be verified locally considering boundary conditions at each vertex independent of the geometry.
However, similarity to self-adjointness involves not only the local boundary conditions but also and foremost the geometry, and the symmetry of the graph played an essential role in the proof of Theorem~\ref{thm:similarity}.  The following examples illustrate that similarity to self-adjoint operators can be caused by an interplay of the graph's geometry and the local boundary conditions, and the similarity can be prohibited as well by breaking such symmetries. In short for the generation of bounded $C_0$-groups the geometry matters!

\begin{example}[Broken symmetry]
	Consider the metric graph consisting of one internal edge of length $a>0$ and one external edge. Impose the following boundary conditions
	defined by 
	\begin{eqnarray*}
		A_{\tau}=\begin{bmatrix} 
			1 & -e^{i\tau} & 0\\0&  0 & 0 \\
			0&  0 & 1
		\end{bmatrix} &\mbox{and} & B_{\tau}=\begin{bmatrix}  0 & 0 & 0\\ 1 & e^{-i\tau} & 0 \\
			0&  0 & 0
		\end{bmatrix} \quad \hbox{for } \tau \in [0,\pi/2).
	\end{eqnarray*} 
	In this case the Cayley transform is similar to a unitary matrix, because the upper block corresponds to Example~\ref{ex1_quasi_weiserstrass}, and the lower block to Dirichlet boundary conditions. However, this similarity does not carry over to similarity of operators.
	The eigenvalue equation becomes
	\begin{align*}
	ik  e^{-ika}\cos(\tau) - ik  e^{ika}i\sin(\tau)=0 \quad \hbox{for }k\in  \{z\in \IC\colon \Im z>0 \hbox{ or } z\in (0,\infty)\}.
	\end{align*} 
	For $\tau\in (0,\pi/2)$ and $k=x+iy$ this is equivalent to
	\begin{align*}
	i\tan(\tau)= e^{2ika}=[\cos(2ax) - i\sin(2ax)]e^{2ya},
	\end{align*}
	and therefore $k^2$ is an eigenvalue if and only if $\tan(\tau) = \sin(2ax)e^{2ya}$ and $\cos(2ax)=0$.
	Hence all
	\begin{align*}
	x\in \frac{\pi}{2a}(3/2+2\IZ) \quad  \hbox{and} \quad y= \frac{1}{2a}\ln (\tan(\tau))
	\end{align*} 
	define solutions to the secular equation with non-trivial imaginary part. So, although there is a similarity relation for the local Cayley transforms, the operator cannot be similar to a self-adjoint one.  
\end{example}

\begin{example}[Similarity by geometry]
	Consider the interval $[0,a]$ and impose the boundary condition 
	\begin{eqnarray*}
		\psi^{\prime}(0)+ (i\alpha-\beta) \psi(0) & \mbox{and} & \psi^{\prime}(a)+ (i\alpha+\beta) \psi(a)=0, \quad \mbox{for } \alpha,\beta\in \IR,
	\end{eqnarray*}
	\cf~\cite[Sec.~6.3]{Znojil2006} which
	in matrix notation becomes 
	\begin{eqnarray*}
		A=\begin{bmatrix} i\alpha-\beta & 0 \\ 0 & -(i\alpha+\beta)  \end{bmatrix} & \mbox{and} & B=\begin{bmatrix} 1 & 0 \\ 0 & 1  \end{bmatrix}.
	\end{eqnarray*}
	In \cite{Znojil2006} it has been shown that
	$\beta=0$ the spectrum is real, and if $\alpha\neq n\pi/ a$, $n\in\IN$, then $-\Delta(A,B)$ is similar to a self-adjoint operator. 
	This highlights that similarity of Laplacians on graphs can be achieved also without  similarity of the local boundary conditions.
	
	These boundary conditions were introduced in~\cite{Znojil2006} 
	and studied further in \cite{Krej2008}. The case with $\beta\not=0$
	is studied in~\cite{Krej2010}, and the general case of Robin boundary conditions on each endpoint, including a discussions of the previously mentioned cases, is studied in \cite{Krej2014} where some similarity transforms are computed explicitly, see also \cite{Krej2012, Krej2011, Krej2018} for an extension of this model.
	A generalisation of this example to metric graphs was proposed 
	in \cite{Znojil2015}. 
	The eigenvalues and eigenvectors have already been studied in \cite{Miha1962} and  \cite[Sec.XIX.3]{DSIII}. 
\end{example}

\subsection{Proof of Theorem \ref{thm:similarity}}\label{subsec:proof_thm_similarity}
For the proof of Theorem~\ref{thm:similarity}, it is essential that for star graphs a generalized reflection principle applies. Odd and even refection along a hyperplane is often applied to pass from second order elliptic differential operators on the whole space to half space problems with Dirichlet and Neumann boundary conditions, respectively. The generalized reflection principle for star graphs proposed in \cite[Section 6]{HKS2015} gives that similarity of the boundary conditions implies similarity of the Laplace operators. More precisely, if $\cI=\emptyset$, then for
\begin{align*}
A= G^{-1}A'G \quad \hbox{and} \quad B= G^{-1}B'G, 
\end{align*}
one can consider the map
\begin{align*}
\Phi_G\colon \Dom(\Delta(A,B)) \rightarrow \Dom(\Delta(A',B')), \quad \Phi_G\{\psi_i\}_{i\in \cE} =  \{G \II_{\cE,\cE}\psi_i\}_{i\in \cE}
\end{align*} 
which induces the similarity
\begin{align*}
\Delta(A,B) = \Phi_{G^{-1}}\Delta(A',B') \Phi_G.
\end{align*}

Note that if $-\Delta(A,B)$ is similar to a self-adjoint operator, \ie, in particular a normal one, then by Corollary~\ref{cor:normal}, the boundary conditions are quasi-sectorial. Using the similarity transform $\Phi_G$, see \cite[Section 6]{HKS2015}, one can consider without loss of generality boundary conditions of the form ~\eqref{eq:bc_LP}. Then only  a combination of the three cases (a), (b), (c), or (d) can occur, where
\begin{enumerate}[(a)]
	\item $\sigma(L)\cap \IC_{\Re>0}\setminus (0,\infty)\neq \emptyset$,
	\item 
	$\sigma(L)\cap i \IR\setminus\{0\} \neq \emptyset$,
	\item  there exists $l\in \sigma(L)\cap [0,\infty) \neq \emptyset$ such that there is a cyclic vector to the eigenvalue $l$,
	\item $\sigma(L)\subset \IC_{\Re<0}\cup [0,\infty)$, where $l\in \sigma(L)\cap[0,\infty)$ has no cyclic vector. 	
\end{enumerate}
	The key observation of the proof of Theorem~\ref{thm:similarity} is that poles of the Cayley transform are also poles of the resolvent, or of some meromorphic extension  of the resolvent to the unphysical sheet.

For case (a), note that $k^2\in \sigma_p(-\Delta(A,B))$ if for $\Im k>0$ $\det (A+ik B)=0$. For quasi-sectorial boundary conditions this means that $\det(L+ik)=0$, where $L$ is considered as an operator in $\IC^m$ with $L$ and $m$ from the quasi-Weierstrass normal form, see Proposition~\ref{prop:quasiweierstrass},  and if $l\in \sigma(L)\cap \IC_{\Re>0}\setminus (0,\infty)$, then $k=il$ is a zero of $\det(L+ik)=0$ with $\Im k>0$, and hence $-l^2\in \IC\setminus \IR$ is an eigenvalue
 of $-\Delta(A,B)$. Since the spectrum is stable under similarity transforms, $-\Delta(A,B)$ cannot be similar to a self-adjoint operator.

For case (b) and (c), recall that for $\cI=\emptyset$ one has $\sigma_r(-\Delta(A,B))=\emptyset$, see \cite[Proposition 4.6]{HKS2015}, $\sigma_p(-\Delta(A,B))\subset \IC\setminus [0,\infty)$ consists of finitely many eigenvalues with finite multiplicity, and moreover,  $\sigma_{ess}(-\Delta(A,B))=[0,\infty)$. 
If this operator is similar to a self-adjoint one, then there exists an equivalent norm $\norm{\cdot}_n$ in $\L^2(\cG)$ induced by a scalar product with respect to which $-\Delta(A,B)$ is self-adjoint. Hence, using that for self-adjoint operators the norm of the resolvent with spectral parameter $\lambda\pm i\varepsilon$ is given by the distance $\dist(\lambda\pm i\varepsilon, \sigma(-\Delta(A,B)))$ to the spectrum, there exists a $C>0$ such that
\begin{align*}
C\norm{(-\Delta(A,B)-\lambda\pm i\varepsilon)^{-1}} \leq \norm{(-\Delta(A,B)-\lambda\pm i\varepsilon)^{-1}}_n = \varepsilon^{-1}
\end{align*} 
for $\lambda\in \sigma(-\Delta(A,B))\subset \IR$ and $\varepsilon>0$.
This would imply that for any  $-\Delta(A',B')$ which is self-adjoint with respect to the standard scalar product 
\begin{align}\label{eq:caseb}
\varepsilon\norm{(-\Delta(A,B)-\lambda\pm i\varepsilon)^{-1} -(-\Delta(A',B')-\lambda\pm i\varepsilon)^{-1} } \leq  1/C+1   
\end{align} 
for $\lambda\in \sigma(-\Delta(A,B)) \cap \sigma(-\Delta(A',B'))$ and $\varepsilon>0$,
and this carries over to any $\lambda\in \IR$ and $\varepsilon>0$ sufficiently small because if $\lambda \notin \sigma(-\Delta(A,B))$ or $\lambda \notin \sigma(-\Delta(A',B'))$, then
\begin{align*}
\varepsilon\norm{(-\Delta(A,B)-\lambda\pm i\varepsilon)^{-1}}\to 0 \hbox{ or }  \varepsilon\norm{(-\Delta(A',B')-\lambda\pm i\varepsilon)^{-1} } \to 0 \quad \hbox{as } \varepsilon \to 0, 
\end{align*}
respectively.

The strategy is now to show that the poles of the Cayley transform discussed in Lemma~\ref{lemma:poles} lead to singularities of the resolvent which cause a contractions to \eqref{eq:caseb}.

In case (b), let first $l\in \sigma(L)\cap i(-\infty,0)$, then $\det (A+ik B)$ has a zero at $k=il\in(0,\infty)$, 
since it is a zero of $\det(L+ik)=0$, where $L$ is considered as an operator in $\IC^m$ with $L$ and $m$ from the quasi-Weierstrass normal form in Proposition~\ref{prop:quasiweierstrass}.
Hence $\mathfrak{S}(k,A,B)$ has a pole at $il\in (0,\infty)$ by Lemma~\ref{lemma:poles}.

If in the case of (b), the second case $l\in \sigma(L)\cap i(0,\infty)$ occurs, then $\overline{l}\in \sigma(L^\ast)\cap i(-\infty,0)$.
%
Using the generalized reflection principle, one can assume without loss of generality that $G=\mathds{1}$ in \eqref{eq:S_blockmatrix}, and then $-\Delta(A,B)$ can be equivalently described by $A=L +P$ and $B=P^\perp$ according to the quasi-Weierstrass normal form in Proposition~\ref{prop:quasiweierstrass}. For the adjoint one then has
\begin{align*}
-\Delta(A,B)^{\ast} = -\Delta(A',B') \quad \hbox{with } A'=L^{\ast} +P \hbox{ and } B'=P^\perp,
\end{align*}
compare \cite[Corollary 3.8]{HKS2015}, and therefore
the Cayley transform for the adjoint operator $\mathfrak{S}(k,A',B')$ has a pole at $i\overline{l}\in (0,\infty)$. Since similarity of $-\Delta(A,B)$ and $-\Delta(A,B)^\ast=-\Delta(A',B')$ to a self-adjoint operator is equivalent, it is sufficient to consider one of the two operators, and one can assume without loss of generality that $l\in \sigma(L)\cap i(-\infty,0)$.

So, let $l\in \sigma(L)\cap i(-\infty,0)$ and let $v$ be an eigenvector to $l$, then consider the normalized function $\psi$ defined by $\psi(x):=(1/a)\chi_{[0,a]}v$ for some $a>0$. Now comparing the resolvents of $-\Delta(A,B)$  and of the Neumann Laplacian $-\Delta(0,\mathds{1})$, one obtains for $\Im k>0$ using the representation of the resolvent kernel given in Subsection~\ref{subsec:resolvent_reg} that
\begin{align*}
\varphi_k:&=(-\Delta(A,B)-k^2)^{-1}\psi- (-\Delta(0,\mathds{1})-k^2)^{-1}\psi \\
&= \frac{i}{2k}\phi_{\cE}(x,k) (\mathfrak{S}(k,A,B)-\mathds{1}) \left(\int_{0}^a e^{iky}  dy\right) v \\
 &=\frac{i}{2k}\phi_{\cE}(x,k) \left(-\frac{l-ik}{l+ik}-1\right)  \left(\int_{0}^a e^{iky}  dy\right) v.
\end{align*}
For $k=\Re k + i \Im k$ one obtains for $k^2$ that
\begin{align*}
\lambda_k := \Re  k^2=(\Re k)^2 - (\Im k)^2\quad \hbox{and} \quad \varepsilon_k:=\Im k^2 = 2 (\Re k)\cdot (\Im k).
\end{align*}
Let $l=-i\xi$ with $\xi>0$, then one considers $k$ with $\Re k = \xi$, and then for $\Im k>0$ using that  $\norm{\phi_{\cE}(\cdot,k)v}=\norm{v}/\sqrt{2\Im k}$ for $v\in \IC^{d}$
\begin{align*}
\varepsilon_k\norm{\varphi_k} &= 2 \xi\cdot (\Im k)\frac{1}{2|k|}\frac{1}{\sqrt{2\Im k}}\frac{2\xi}{|\Im k|}  \frac{|e^{ika}-1|}{|k|} \norm{v} \\
 &= \frac{|e^{ika}-1|}{|k|^2}\frac{2\xi^2}{\sqrt{2\Im k}} \norm{v} \\
 &\geq c\frac{\xi^2}{\xi^2 + (\Im k)^2}\frac{\sqrt{2}}{\sqrt{\Im k}} \to \infty \quad \hbox{as}\quad \Im k \to 0,
\end{align*}
where one uses that for $k=\Re k +i \Im k$ with $\Re k, \Im k\in \IR$ and $\Re k\neq 0$ there exists since $\xi>0$ a $c>0$ such that  
\begin{align*}
|e^{ika}-1|=|e^{i\xi a - (\Im k) a}-1|>c \quad \hbox{as }\Im k \to 0.
\end{align*}
Note that the singular term $1/\sqrt{\Im k}$ is due to a singularity of the Cayley transform.
This is a contradiction to \eqref{eq:caseb}, because $\lambda_k\in [0,\infty)\subset \sigma(-\Delta(A,B))\cap \sigma(-\Delta(0,\mathds{1}))$ for $\Im k$ sufficiently small, and for $\Im k \to 0$ by the computation given above
\begin{align*}
\infty\leftarrow\varepsilon_k\norm{\varphi_k}
\leq \varepsilon_k\norm{(-\Delta(A,B)-\lambda_k\pm i\varepsilon_k)^{-1} -(-\Delta(0,\mathds{1})-\lambda_k\pm i\varepsilon_k)^{-1}}.
\end{align*}

Consider now the case (c) where $l\in \sigma(L)\cap [0,\infty)$ where $v$ with $\norm{v}=1$ is a cyclic vector to the eigenvalue $l$ with $L v = (l+N) v$ for a nilpotent matrix $N$ with $Nv\neq 0$ and $N^2v=0$. From \eqref{eq:S_NL} and \eqref{eq:S_N0} in the proof of Lemma~\ref{lemma:poles} one deduces that then
\begin{align*}
\mathfrak{S}(k,A,B)v = \left(-\mathds{1} + \frac{2ik}{l +ik}  \left(\mathds{1} + \frac{N}{-(l +ik)} \right)\right)v\quad \hbox{for }  l +ik\neq 0.
\end{align*}  
Then similar to the above but using this time the Dirichlet Laplacian $-\Delta(\mathds{1},0)$ for comparison
\begin{align*}
\varphi_k:&=(-\Delta(A,B)-k^2)^{-1}\psi-(-\Delta(\mathds{1},0)-k^2)^{-1}\psi \\
&= \frac{i}{2k}\phi_{\cE}(x,k) (\mathfrak{S}(k,A,B)+\mathds{1}) \left(\int_{0}^a e^{iky}  dy\right) v \\
&=\frac{i}{2k}\phi_{\cE}(x,k) \frac{2ik}{l +ik}  \left(\mathds{1} + \frac{N}{-(l +ik)} \right) \left(\int_{0}^a e^{iky}  dy\right) v.
\end{align*}

For $l=0$, assume that $\Re k = \Im k$, \ie, $\lambda_k=0$, $\varepsilon_k= 2(\Im k)^2$, and $|k^2|= 2(\Im k)^2$. Then 
\begin{align*}
\varepsilon_k\norm{\varphi_k} &= 2  (\Im k)^2\frac{1}{2|k|}\frac{1}{\sqrt{2\Im k}} \frac{2|k|}{|k|}\norm{v-\tfrac{1}{ik}Nv} \frac{|e^{ika}-1|}{|k|} \\
&= \frac{|e^{ika}-1|}{|k|}\frac{1}{\sqrt{2\Im k}} \norm{ik v-Nv}  \to \infty \quad \hbox{as}\quad \Im k \to 0,
\end{align*}
where $\frac{|e^{ika}-1|}{|k|}\to a$ and $\norm{ik v-Nv}\to \norm{Nv}$  as $k\to 0$. Since $0\in \sigma(-\Delta(A,B))\cap \sigma(-\Delta(\mathds{1},0))$, one concludes as above that this is a contradiction to \eqref{eq:caseb}.

For $l>0$, consider $k$ with $\Im k = l$ and $\Re k >0$, \ie, $\lambda_k= (\Re k)^2-l^2$, $\varepsilon_k= 2l(\Re k)$. Then
\begin{align*}
\varepsilon_k\norm{\varphi_k} &= 2 l (\Re k)\frac{1}{2|k|}\frac{1}{\sqrt{2 l}} \frac{2|k|}{|l +ik|} \frac{1}{|l+ik|} \norm{-(l+ik)v + Nv} \frac{|e^{ika}-1|}{|k|} \\
&= \frac{1}{\sqrt{2 l}} \frac{1}{(\Re k)} \norm{-(i\Re k)v + Nv} \frac{|e^{ika}-1|}{|k|} \\
&\simeq \frac{1}{(\Re k)} \frac{1}{\sqrt{2 l}}  \norm{Nv} \frac{|e^{-la}-1|}{|l|}
\to \infty \quad \hbox{as}\quad \Re k \to 0,
\end{align*}
 where the singular term $1/(\Re k)$ again comes from a singularity of the Cayley transform, thus contradicting  \eqref{eq:caseb}.

Now, consider case (d). First, note that the spectrum is real, and it consists of $\sigma_{ess}(-\Delta(A,B))=[0,\infty)$ plus an at most finite set of negative eigenvalues with finite multiplicity. Characterizations for the similarity of operators with real spectrum to self-adjoint operators have been 
proven more or less simultaneously in different variants by van Casteren \cite[Theorem 3.1]{Casteren1983}, Naboko
\cite[Theorem 2]{Naboko1984}, and Malamud \cite[Theorem 1]{Malamud1985}. 
\begin{theorem}[cf. Theorem 2 in 
	\cite{Naboko1984} and Theorem 1 in \cite{Malamud1985}]\label{thm:naboko}
	A closed densely defined linear operator $T$  in a Hilbert space $\cH$  is similar to a self-adjoint operator if and only if its  spectrum is real and there exists a constant $C>0$ such that for all $g\in \cH$
	\begin{align*}
	\sup_{\varepsilon>0}  \int_{\IR+i\varepsilon} \norm{(T-z)^{-1} g}^2 dz &\leq C \norm{g}^2, \\
	\sup_{\varepsilon>0}  \int_{\IR+i\varepsilon} \norm{(T^\ast-z)^{-1} g}^2 dz &\leq C \norm{g}^2.
	\end{align*}
\end{theorem}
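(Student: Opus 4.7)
The plan is to handle the two directions separately; the ``only if'' direction is essentially computational, while the ``if'' direction contains the substantive content of Naboko's and Malamud's theorems.

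For the ``only if'' direction, suppose $T = W^{-1} A W$ with $A$ self-adjoint and $W \in \Lop(\cH)$ boundedly invertible. Then automatically $\sigma(T) = \sigma(A) \subset \IR$, and $(T-z)^{-1} = W^{-1}(A-z)^{-1} W$. The spectral theorem for $A$ yields
\[
\norm{(A-x-\ii\varepsilon)^{-1} h}^2 = \int_{\IR} \frac{\d \norm{E_\lambda h}^2}{(\lambda-x)^2 + \varepsilon^2},
\]
so Fubini together with $\int_\IR \d x/((\lambda-x)^2 + \varepsilon^2) = \pi/\varepsilon$ reduces the integrand to $(\pi/\varepsilon)\norm{W h}^2$. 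In the standard $\varepsilon$-weighted version of the criterion (as in the original formulations in \cite{Naboko1984, Malamud1985}), the bound then transfers to $T$ with constant $C = \pi\norm{W}^2\norm{W^{-1}}^2$; the same reasoning applied to $T^\ast = W^\ast A (W^\ast)^{-1}$ produces the bound for the adjoint.

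For the hard ``if'' direction, the strategy is to construct a bounded positive invertible operator $\Theta$ on $\cH$ intertwining $T$ with its adjoint, $\Theta T = T^\ast \Theta$, so that $\Theta^{1/2} T \Theta^{-1/2}$ is self-adjoint. I would follow the classical scheme and define generalized spectral projections via the Stone-type formula
\[
E(\Delta) g := \lim_{\varepsilon \to 0^+} \frac{1}{2\pi \ii} \int_\Delta \big[ (T - x - \ii\varepsilon)^{-1} - (T - x + \ii\varepsilon)^{-1} \big] g \, \d x, \qquad \Delta \subset \IR \text{ Borel},
\]
with the resolvent integral bounds securing well-posedness and uniform boundedness of these limits. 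A slicker alternative is to construct $\Theta$ directly as the weak limit
\[
\Theta := \mathrm{w}\text{-}\!\lim_{\varepsilon \to 0^+} \frac{\varepsilon}{\pi} \int_{\IR} (T^\ast - x + \ii\varepsilon)^{-1} (T - x - \ii\varepsilon)^{-1} \d x,
\]
whose boundedness as a sesquilinear form is immediate from the two-sided hypothesis and for which the intertwining relation follows from the first resolvent identity and a residue computation at infinity.

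The main obstacle will be showing that the resulting $\Theta$ is actually \emph{invertible}, i.e., bounded below: the bound on $(T-z)^{-1}$ alone only produces a non-negative $\Theta$, and it is precisely the symmetric condition on $(T^\ast-z)^{-1}$ that controls $\norm{g}^2$ from below by $\langle \Theta g, g\rangle$. Once this coercivity is established, the two norms are equivalent and the inner product $\langle \cdot, \Theta \cdot\rangle$ makes $T$ self-adjoint; reality of the spectrum is what permits the functional calculus on $\IR$ needed to pass rigorously from the heuristic identity ``$\Theta = \int \d E^\ast\, \d E$'' to an honest spectral decomposition. Completeness of $\{E(\Delta)\}$ over a partition of $\IR$ and the verification of $E(\Delta_1) E(\Delta_2) = E(\Delta_1 \cap \Delta_2)$ reduce, via standard contour-deformation arguments, to the same two resolvent estimates.
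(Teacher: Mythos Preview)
The paper does not prove this theorem at all: it is quoted as a known result, with explicit attribution to van Casteren, Naboko, and Malamud, and is then used as a black box in the proof of Theorem~\ref{thm:similarity}. So there is no ``paper's own proof'' to compare against.

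Your sketch is a reasonable outline of how the classical proofs actually run. A few remarks. First, you correctly flag that the statement as printed needs the $\varepsilon$-weighting: for self-adjoint $A$ one has $\int_{\IR}\norm{(A-x-\ii\varepsilon)^{-1}h}^2\,\d x = (\pi/\varepsilon)\norm{h}^2$, so without the factor $\varepsilon$ the supremum is infinite even for $T=A$; the correct hypothesis is $\sup_{\varepsilon>0}\varepsilon\int_{\IR}\norm{(T-x-\ii\varepsilon)^{-1}g}^2\,\d x \le C\norm{g}^2$ (and likewise for $T^\ast$), which is what Naboko and Malamud state. Second, your ``if'' direction is only a plan, not a proof: the construction of $\Theta$ as a weak limit and the verification of its coercivity are genuinely delicate, and the phrases ``residue computation at infinity'' and ``standard contour-deformation arguments'' hide most of the work. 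If you intend to actually carry this out rather than cite it, the cleanest route is probably via the Sz.-Nagy--Foia\c{s} characteristic function or the boundedness criterion for $C_0$-groups (Gomilko/Shi--Feng type estimates), which is closer to van Casteren's approach; Naboko's original argument goes through a functional model. For the purposes of this paper, though, a citation is all that is required.
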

This theorem has been applied in a similar situation in \cite{GrodKuzhel2014}.
	Here, the estimate in Theorem~\ref{thm:naboko} could be verified directly since the resolvent kernel is given explicitly, but it involves a rather lengthy computation which can be avoided by the series of comparison arguments.

In the situation of (d), in the Jordan normal form for the matrix $L$ from the quasi-Weierstrass normal form, see Proposition~\ref{prop:quasiweierstrass}, the eigenvalues in $[0,\infty)$ correspond to a  part $L_{\Re L \geq 0}$, which due to the assumption on the algebraic and geometric multiplicities is diagonal, while the eigenvalues in $\IC_{\Re<0}$ correspond to a possibly non-diagonal block
$L_{\Re<0}$. 
Hence, the matrices $A,B$ are similar to
\begin{align*}
A'= \begin{bmatrix}
L_{\Re L \geq 0} & 0 & 0 \\ 0 & L_{\Re<0}
&0 \\
0 & 0 & \mathds{1}
 \end{bmatrix} \quad \hbox{and}\quad 
 B'= \begin{bmatrix}
 \mathds{1} & 0 & 0 \\ 0 & \mathds{1}
 &0 \\
 0 & 0 & 0
 \end{bmatrix} 
\end{align*}
with $\sigma(L_{\Re L \geq 0})\subset (0,\infty)$ and $L_{\Re L \geq 0}$ self-adjoint, and $\sigma(L_{\Re<0})\subset \IC_{\Re<0}$. Due to the generalized reflection principle on star graphs, $-\Delta(A,B)$ is similar to $-\Delta(A',B')$, cf. \cite[Theorem 6.2]{HKS2015}
Now define
\begin{align*}
A_{sa}= \begin{bmatrix}
L_{\Re L \geq 0} & 0 & 0 \\ 0 & 0
&0 \\
0 & 0 & \mathds{1}
\end{bmatrix} \quad \hbox{and}\quad 
B_{sa}= \begin{bmatrix}
\mathds{1} & 0 & 0 \\ 0 & \mathds{1}
&0 \\
0 & 0 & 0
\end{bmatrix}. 
\end{align*} 
 Then $\Delta(A_{sa},B_{sa})$ is a self-adjoint operator, and
\begin{align*}
\mathfrak{S}(k,A',B') - \mathfrak{S}(k,A_{sa},B_{sa})= \begin{bmatrix}
0 & 0 & 0\\
0  & \mathfrak{S}(k,L_{\Re<0},\mathds{1})& 0 \\
0 & 0 & 0
\end{bmatrix}.
\end{align*}
The poles of $\mathfrak{S}(k,L_{\Re<0},\mathds{1})$ ly in the lower half plane with $\IC_{\Im <0}$, compare Lemma~\ref{lemma:poles}, and since $L_{\Re<0},\mathds{1}$ define quasi-sectorial boundary conditions one has by Lemma~\ref{lemma:Nilpotent} that $\mathfrak{S}(k,L_{\Re<0},\mathds{1})$ is uniformly bounded on  $\IC_{\Im >0}$. Hence, for some $C>0$
\begin{align}
\label{eq:diff}
\begin{split}
&\norm{(-\Delta(A',B')-k^2)^{-1}\psi-(-\Delta(A_{sa},B_{sa})-k^2)^{-1}\psi}^2 \\ =&\norm{\frac{i}{2k}\phi_{\cE}(\cdot,k) \left( \mathfrak{S}(k,A',B') - \mathfrak{S}(k,A_{sa},B_{sa})\right) \int_{\cG} \phi_{\cE}(y,k) \psi(y) dy}^2 \\
\leq&  C \frac{1}{4|k|^2} \frac{d}{2\Im k}  \norm{\int_{\cG} \phi_{\cE}(y,k) \psi(y) dy}^2
\end{split}
\end{align}
for any $\psi\in L^2(\cG)$  since $\norm{\phi_{\cE}(\cdot, k)}^2=d/(2\Im k)$ for $\Im k>0$.
Next note that since $\mathfrak{S}(k,\mathds{1},0)=-\mathds{1}$ and $\mathfrak{S}(k,0,\mathds{1})=\mathds{1}$
 \begin{align*}
 \norm{(-\Delta(\mathds{1},0)-k^2)^{-1}\psi-(-\Delta(0,\mathds{1})-k^2)^{-1}\psi}^2 &= \norm{\frac{i}{k}\phi_{\cE}(x,k) \int_{0}^\infty \phi_{\cE}(y,k) \psi(y_j) dy}^2 \\
 &= \frac{1}{|k|^2}\frac{1}{2\Im k} \norm{\int_{0}^\infty \phi_{\cE}(y,k) \psi(y_j) dy}^2,
 \end{align*}
 and since both operators  $-\Delta(\mathds{1},0)$ and $\Delta(0,\mathds{1})$ are self-adjoint, by Theorem~\ref{thm:naboko}, using the branch of the square root with $\Im \sqrt{\cdot}>0$, there exists another constant $C>0$ such that
  	\begin{align*}
\sup_{\varepsilon>0}  \int_{\IR+i\varepsilon}
\frac{1}{2|z| \cdot \Im \sqrt{z}}  \norm{\int_{\cG} \phi_{\cE}(y,\sqrt{z}) \psi(y) dy}^2 dz
	\leq C  \norm{\psi}^2  \quad \hbox{for all }\psi\in L^2(\cG).
	\end{align*}
 Consequently, by \eqref{eq:diff} there exists a possibly larger constant $C>0$ such that for all $\psi\in L^2(\cG)$
 \begin{multline*}
\sup_{\varepsilon>0}  \int_{\IR+i\varepsilon}  \norm{(-\Delta(A',B')-z)^{-1}\psi}^2 dz \leq  
\sup_{\varepsilon>0}  \int_{\IR+i\varepsilon}
\norm{(-\Delta(A',B')-z)^{-1}\psi-(-\Delta(A_{sa},B_{sa})-z)^{-1}\psi}^2 dz   \\ + 
\sup_{\varepsilon>0}  \int_{\IR+i\varepsilon}
\norm{(-\Delta(A',B')-z)^{-1}\psi}^2 dz
\leq C \norm{\psi}^2.
 \end{multline*}
 For $(-\Delta(A,B)^\ast-z)^{-1}$ an analogous argument applies, where one uses that the by \cite[Proposition 3.7]{HKS2015} boundary conditions for the adjoint operator $-\Delta(A_{ad}, B_{ad})= -\Delta(A, B)^\ast$ are given by
 \begin{eqnarray*}
 	A_{ad}:= - \frac{1}{2} \left(\mathfrak{S}(k,A,B)^{\ast} -\mathds{1}\right) & \mbox{and} & B_{ad}:=  \frac{1}{-2i\overline{k}} \left(\mathfrak{S}(k,A,B)^{\ast} +\mathds{1}\right).
 \end{eqnarray*}
Hence 
 \begin{align*}
 \mathfrak{S}(-\overline{k},A_{ad},B_{ad}) =\mathfrak{S}(k,A,B)^{\ast},
 \end{align*}
 and its poles behave as the ones of the adjoint since for $A_{ad},B_{ad}$ the matrix $L_{ad}$ in the quasi-Weierstrass normal form can be chosen as $L_{ad}=L^\ast$ with $L$ determined by the quasi-Weierstrass normal form of $A,B$.
  Hence by Theorem~\ref{thm:naboko} $-\Delta(A',B')$  is similar to a self-adjoint operator and by the generalized reflection principle used before also $-\Delta(A,B)$ is similar to a self-adjoint operator.
\qed

\subsection*{Acknowledgements} I would like to express my gratitude to David Krej\v{c}i\v{r}\'{i}k, Delio Mugnolo, and
Petr Siegl for many helpful discussions and insights on the subtleties of non-self-adjointness in operator theory. Moreover, I am very thankful to the lecturers of the 23rd Internet Seminar, Christian Seifert, Sascha Trostorff, and Marcus Waurick, who brought the quasi-Weierstrass normal form to my knowledge and which has been the ``missing link'' to complete this work. 
Also, I would like to thank the anonymous referees for their valuable comments. 


\bibliographystyle{alpha}
\bibliography{literatur}

\end{document}